

\documentclass[12pt,twoside]{article}
\usepackage{amsthm}
\usepackage{epsfig}
\usepackage{enumitem}
    \usepackage{ amssymb }
\usepackage{amsmath}
\usepackage{mathtools}
\usepackage{multirow}
\usepackage{float}
\pagestyle{myheadings}
\markboth{\sc Frankl, F\"uredi, Goorevitch, Holzman, Simonyi: }{Triangle-free triple systems}

\usepackage{epsfig}
\input epsf

\oddsidemargin 10pt \evensidemargin 10pt
\topmargin 10pt \headheight 0.2in \headsep 0.12 in
\textheight 610pt \textwidth 455pt
\hsize=17truecm \vsize=23.7truecm

\tolerance=10000

\newtheorem{thm}{Theorem}
\newtheorem{defi}[thm]{Definition}
\newtheorem{lem}[thm]{Lemma}
\newtheorem{claim}[thm]{Claim}
\newtheorem{cor}[thm]{Corollary}

\newtheorem{obs}[thm]{Observation}

\newcommand{\ex}{{\rm ex}}

\newcommand{\AAA}{{\mathbf A}}
\newcommand{\BB}{{\mathbf B}}
\newcommand{\CC}{{\mathbf C}}
\newcommand{\DD}{{\mathbf D}}
\newcommand{\FFF}{{\mathbf F}}

\newcommand{\EE}{{\mathcal E}}
\newcommand{\FF}{{\mathcal F}}
\newcommand{\HH}{{\mathcal H}}
\newcommand{\PP}{{\mathcal P}}
\newcommand{\QQ}{{\mathcal Q}}
\newcommand{\RR}{{\mathcal R}}
\newcommand{\TT}{{\mathcal T}}

\newcommand{\nice}{{near optimal\enskip }}
\newcommand{\niceness}{{near optimality\enskip }}
\newcommand{\eps}{\varepsilon}

\begin{document}

\title{\huge Triangle-free triple systems}
\author{{\bf Peter Frankl},  ${}^{1}$ \,
{\bf Zolt\'an F\"uredi}, ${}^{2}$ \, {\bf Ido Goorevitch}, ${}^{3}$ \\ {\bf Ron Holzman}, ${}^{4}$ \, and
\, {\bf G\'abor Simonyi}  ${}^{5}$
}
\date{
${}^{1,2,5}$ HUN-REN  Alfr\'ed R\'enyi Institute of Mathematics, \\
Budapest, P. O. Box 127, 1364 Hungary\\
${}^{5}$ Department of Computer Science and Information Theory,\\
Faculty of Electrical Engineering and Informatics,\\
Budapest University of Technology and Economics, Budapest, Hungary\\
${}^{3,4}$ Department of Mathematics,\\ Technion-Israel Institute of Technology,\\ Haifa 3200003, Israel
\\ ${}$
\\
E-mails: {\tt frankl@renyi.hu}, \quad {\tt furedi@renyi.hu}, 
\quad {\tt idogoorevitch@gmail.com}, \quad {\tt holzman@technion.ac.il}, \quad and  \quad {\tt simonyi@renyi.hu}
\thanks{Research
of the second author was supported in part by the National Research Development and Innovation Office, NKFIH,
grants K--132696 and KKP 133819.
\\ \noindent
Research of the fifth author was partially supported by the National Research, Development and Innovation Office (NKFIH) grants K--132696 and SNN--135643 of NKFIH Hungary.
\\ \noindent
{\em Subject Classification:} Primary: 05D05. Secondary: 05C69, 05C76
\\ \noindent
{\em Keywords:} Extremal hypergraphs, triple systems, designs.
}
}



\maketitle

\begin{abstract}
{\small
There are four non-isomorphic configurations of triples that can form a
triangle in a $3$-uniform hypergraph.
Forbidding different combinations of these four
configurations, fifteen extremal problems can be defined, several of which
already appeared in the literature in some different context.
Here we systematically study all of these problems solving the new cases
exactly or asymptotically. In many cases we also characterize the extremal constructions.}

\end{abstract}

\section{Introduction}
\message{Introduction}

\subsection{Notations}\label{ss:10}
A {\em hypergraph} $\FF$ is a pair $(V(\FF),E(\FF))$
 where the set of {\em edges}, $E(\FF)$, is a family of subsets of
 the set of {\em vertices} $V(\FF)$  (see Berge~\cite{Berge}, Lov\'asz~\cite{LovEx}).
To simplify notations we frequently identify $\FF$ with $E(\FF)$ (and omit parentheses, etc.) if it does not cause ambiguity.
If all edges of $\FF$ have the same size, say $r$, then we speak of an $r$-graph, or $r$-{\em uniform} hypergraph.
With this terminology  $2$-graphs are simple graphs without loops and 3-uniform hypergraphs are also called {\em triple systems}.
We usually identify $V=V(\FF)$ with the set of first $n$ positive integers, $[n]:=\{ 1, 2, 3, \dots, n\}$. Then ${V \choose r}$ stands for the complete $r$-graph with vertex set $V$, it is $\{ F: |F|=r, F\subseteq V \}$.



We write $\partial \FF$ for $\{ F\setminus \{x\}: F\in \FF, x\in F  \}$, it is called the {\em shadow} of $\FF$.
If $\FF$ is a triple system then its shadow is a simple graph.
The {\em degree} of $S\subseteq V(\FF)$ is the number of edges containing $S$, in notation
$$\deg_\FF(S):= |\{ F: F\in \FF, S\subseteq F\}|.
 $$
Given a triple system $\FF$ we write $\FF[x]$ for the \emph{link} of $x$ in $\FF$, i.e., the graph on $V(\FF) \setminus \{x\}$ whose edges are those pairs that form with $x$ a triple in $\FF$. 
If $F$ is an edge of a $3$-uniform hypergraph ${\FF}$ such that $x,y\in F$, $x \ne y$,
and there is no other $F'\in {\FF}$ containing both $x$ and $y$, then we say that
the pair $xy$ is an {\it own pair} of $F$. With our previous notation, $\deg_\FF(xy)=1$.

\subsection{Cycles in hypergraphs}
In hypergraph theory
 a cycle of length $\ell$ is usually defined as a sequence of
 {\em distinct}
 vertices $x_1, x_2, \dots, x_\ell$ together with a sequence of
 {\em distinct} edges
 $E_1, \dots, E_\ell$ such that $E_i$ contains $x_i$ and $x_{i+1}$
 for $1\leq i < \ell$ and $E_\ell$ contains $x_\ell$ and $x_1$.
We refer to a cycle of $\ell$ edges as an $\ell$-cycle.
For ordinary graphs, the above definition coincides with the definition
 of a cycle $C_\ell$ in graphs.
We call a 3-cycle a {\it triangle}.

One of the oldest problems in extremal graph theory is to
 investigate the maximum number of edges of graphs without given short
 cycles, e.~g., the Tur\'an-Mantel theorem says that a triangle-free
 graph on $n$ vertices has at most
 $\lfloor \frac{1}{4}n^2 \rfloor$ edges and the only extremal graph is the
 complete bipartite graph with parts of sizes
 $\lfloor \frac{n}{2} \rfloor$ and $\lceil \frac{n}{2} \rceil$, respectively.
For further graph results see the book of Bollob\'as~\cite{Bol}.

Hypergraph problems are 
 more complicated, but there are some results, e.~g.,
 that of Lazebnik and Verstra\"ete~\cite{LazVer}
 saying that if $\FF$ is $3$-uniform, has $n$ vertices, girth five
 (i.e., no cycles of length less than five) and a maximum
 number of edges, then $|E(\FF)|=\frac{1}{6}n^{3/2}+ o(n^{3/2})$.

In this paper we deal with 3-uniform hypergraphs containing no triangles of certain types.

\subsection{Triangles in triple systems}
One can easily realize that
there are
altogether four non-isomorphic configurations of three triples that create a
triangle. We will call these configurations $\AAA,\BB,\CC$, and $\DD$ (see Figure~1 below). The main goal of this paper is to 
 study the maximum number of triples one can have on $n$ vertices while excluding certain combinations of the four triangle creating
 configurations.

Configuration $\AAA$ is the one needing six vertices, that is, it is isomorphic to
the hypergraph with $V=\{1,\dots, 6\}, E=\{124,135,236\}.$

Configuration $\BB$ is on five vertices, where two of the three triples involved
intersect in two vertices and each of them meets the third triple in a different vertex.
Thus $\BB$ is
isomorphic to the hypergraph with $V=\{1,\dots, 5\},
E=\{125,134,234\}.$

Configuration $\CC$ is the one on four vertices, that is, it is isomorphic to
the hypergraph with $V=\{1,\dots, 4\}, E=\{124,134,234\}.$

Configuration $\DD$ is on five vertices again, with one triple sharing with each of the other two a different pair of vertices. It is isomorphic to the hypergraph
with $V=\{1,\dots, 5\}, E=\{123,134,235\}.$


\begin{center}
\begin{figure}[H]
\epsfig{file=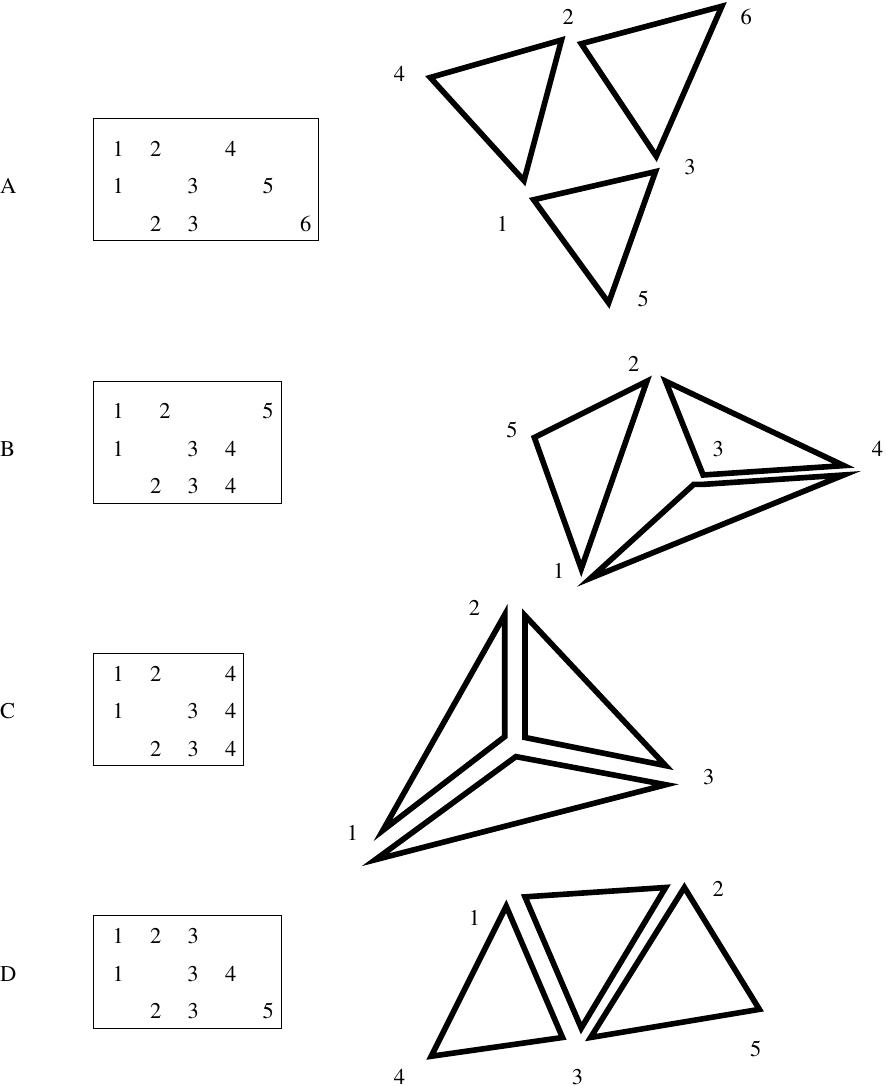,height=11cm}
\caption{The four configurations of three triples forming a triangle}
\end{figure}
\end{center}

Our starting point is a result (first proved by Gy\H{o}ri~\cite{GyE}) giving the maximum number of edges in a
triangle-free $3$-uniform hypergraph, i.e., in case all of $\AAA,\BB,\CC$
and $\DD$ are forbidden subconfigurations.
In Section~\ref{sec:all} we reprove it and also describe the extremal families.
Then we consider the fourteen other cases when some proper subset of
 $\{\AAA,\BB,\CC,\DD\}$ is forbidden. Five of these subsets define
problems of long history, most of them are solved, but not all. 
Then we go on to
 investigate the other nine. For all but one of those nine combinations, we obtain an exact solution of the extremal problem (though sometimes only for large enough $n$), often along with a characterization of the extremal constructions. In the one exceptional case we give an asymptotic solution.

To state our results let us introduce the notation $\ex(n, X_1\dots X_k)$
for $\{X_1,\dots, X_k\} \subseteq \{\AAA,\BB,\CC,\DD\}$.
It denotes the maximum number of triples in a $3$-uniform hypergraph on $n$ vertices which does not contain any of the configurations $X_1, \dots, X_k$.
 Table 1 below summarizes the results, including earlier ones and those in the present paper.

\newpage


\begin{center}
\begin{table}[H]
    \begin{tabular}{ccccccc}
    & & $\AAA\BB$ & & & & \\ \cline{3-3}
    & & \multicolumn{1}{|c|}{${n-1 \choose 2}$ for $n \ge 5$} & & & &\\
    & & \multicolumn{1}{|c|}{and char. ext.} & & & &\\
    & & \multicolumn{1}{|c|}{Theorem~\ref{thm:AB}
    } & & & &\\ \cline{3-3}\noalign{\smallskip}\noalign{\smallskip}\noalign{\smallskip}
        $\AAA$ & & $\AAA\CC$ & & $\AAA\BB\CC$ & & \\ \cline{1-1} \cline{3-3} \cline{5-5}
    \multicolumn{1}{|c|}{${n-1 \choose 2}$ for $n \ge n_0$} & & \multicolumn{1}{|c|}{$\lfloor \frac{1}{4}(n-1)^2 \rfloor$ for even $n$} & & \multicolumn{1}{|c|}{$\lfloor \frac{1}{4}(n-1)^2 \rfloor$} & & \\
        \multicolumn{1}{|c|}{and char. ext.} & & \multicolumn{1}{|c|}{$\frac{1}{4}(n-1)^2 + 1$ for odd $n \ge 5$} & & \multicolumn{1}{|c|}{and char. ext.} & & \\
    \multicolumn{1}{|c|}{Frankl\&F\"uredi~\cite{FF87}} & & \multicolumn{1}{|c|}{and char. ext.} & &\multicolumn{1}{|c|}{Theorem~\ref{thm:ABC}} & & \\
    \multicolumn{1}{|c|}{} & & \multicolumn{1}{|c|}{Theorem~\ref{thm:AC}} & & \multicolumn{1}{|c|}{} & & \\
    \cline{1-1} \cline{3-3} \cline{5-5}\noalign{\smallskip}\noalign{\smallskip}\noalign{\smallskip}
    $\BB$ & & $\AAA\DD$ & & $\AAA\BB\DD$ & & \\ \cline{1-1} \cline{3-3} \cline{5-5}
    \multicolumn{1}{|c|}{$\lfloor \frac{n}{3} \rfloor \lfloor \frac{n+1}{3} \rfloor \lfloor \frac{n+2}{3} \rfloor$ for $n \ge n_0$} & & \multicolumn{1}{|c|}{$\lfloor \frac{1}{4}(n-1)^2 \rfloor + 2$ for even $n \ge 4$} & & \multicolumn{1}{|c|}{$\lfloor \frac{1}{8}n^2 \rfloor$ for $n \ge 8$} & & \\
    \multicolumn{1}{|c|}{Frankl\&F\"uredi~\cite{FF83}} & & \multicolumn{1}{|c|}{$\frac{1}{4}(n-1)^2$ for odd $n$} & & \multicolumn{1}{|c|}{and char. ext.} & & \\
    \multicolumn{1}{|c|}{char. ext.} & & \multicolumn{1}{|c|}{and char. ext.} & & \multicolumn{1}{|c|}{{Theorem~\ref{thm:ABD}}} & & $\AAA\BB\CC\DD$ \\ \cline{7-7}
    \multicolumn{1}{|c|}{Keevash\&Mubayi~\cite{KeeMub}} & & \multicolumn{1}{|c|}{Theorem~\ref{thm:AD}} & & \multicolumn{1}{|c|}{} & & \multicolumn{1}{|c|}{$\lfloor \frac{1}{8}n^2 \rfloor$} \\ \cline{1-1} \cline{3-3} \cline{5-5}
    & & & & & & \multicolumn{1}{|c|}{Gy\H{o}ri~\cite{GyE}} \\
    $\CC$ & & $\BB\CC$ & & $\AAA\CC\DD$ & & \multicolumn{1}{|c|}{char. ext.} \\ \cline{1-1} \cline{3-3} \cline{5-5}
    \multicolumn{1}{|c|}{$\ge \frac{2}{7}{n \choose 3}\bigl( 1 - o(1) \bigr)$} & & \multicolumn{1}{|c|}{$\lfloor \frac{n}{3} \rfloor \lfloor \frac{n+1}{3} \rfloor \lfloor \frac{n+2}{3} \rfloor$} & & \multicolumn{1}{|c|}{$\lfloor \frac{1}{4}(n-1)^2 \rfloor$} & & \multicolumn{1}{|c|}{Theorem~\ref{thm:all}} \\ \cline{7-7}
    \multicolumn{1}{|c|}{Frankl\&F\"uredi~\cite{FF84}} & & \multicolumn{1}{|c|}{and char. ext.} & & \multicolumn{1}{|c|}{and char. ext.} & & \\
    \multicolumn{1}{|c|}{$\le .287{n \choose 3}$ for $n \ge n_0$} & & \multicolumn{1}{|c|}{Bollob\'as~\cite{BB}} & & \multicolumn{1}{|c|}{Corollary~\ref{cor:ACD}} & & \\
    \multicolumn{1}{|c|}{Falgas-Ravry\&Vaughan~\cite{FRV13}} & & \multicolumn{1}{|c|}{} & & \multicolumn{1}{|c|}{} & & \\ \cline{1-1} \cline{3-3} \cline{5-5}\noalign{\smallskip}\noalign{\smallskip}\noalign{\smallskip}
        $\DD$ & & $\BB\DD$ & & $\BB\CC\DD$ & & \\ \cline{1-1} \cline{3-3} \cline{5-5}
    \multicolumn{1}{|c|}{$\frac{n(n-1)}{3}$, $n \equiv 1,4$} & & \multicolumn{1}{|c|}{$\frac{n(n-1)}{3}$, $n \equiv 1,4$} & & \multicolumn{1}{|c|}{$\frac{1}{4}n^2\bigl( 1 - o(1) \bigr)$} & & \\
    \multicolumn{1}{|c|}{$\frac{n(n-1)}{3} - 4$, $n \equiv 7,10$} & & \multicolumn{1}{|c|}{$\frac{n(n-1)}{3} - 4$, $n \equiv 7,10$} & & \multicolumn{1}{|c|}{Theorem~\ref{thm:BCD}} & & \\
    \multicolumn{1}{|c|}{$\frac{n(n-2)}{3}$, $n \equiv 0,2,3,6,8,9$} & & \multicolumn{1}{|c|}{$\frac{n(n-2)}{3}$, $n \equiv 0,2,3,8$} & & \multicolumn{1}{|c|}{} & & \\
    \multicolumn{1}{|c|}{$\frac{n(n-2)}{3} - 1$, $n \equiv 5,11$} & & \multicolumn{1}{|c|}{$\frac{n(n-2)}{3} - 1$, $n \equiv 5,6,9,11$} & & \multicolumn{1}{|c|}{} & & \\
    \multicolumn{1}{|c|}{$n \ge n_0$ (residues mod $12$)} & & \multicolumn{1}{|c|}{$n \ge n_0$ (residues mod $12$)} & & \multicolumn{1}{|c|}{} & & \\
    \multicolumn{1}{|c|}{Theorem~\ref{thm:DBD}} & & \multicolumn{1}{|c|}{Theorem~\ref{thm:DBD}} & & \multicolumn{1}{|c|}{} & & \\ \cline{1-1} \cline{3-3} \cline{5-5}\noalign{\smallskip}\noalign{\smallskip}\noalign{\smallskip}
    & & $\CC\DD$ & & & & \\ \cline{3-3}
    & & \multicolumn{1}{|c|}{$\lfloor \frac{1}{4}(n-1)^2 \rfloor$} & & & & \\
        & & \multicolumn{1}{|c|}{and char. ext.} & & & & \\
    & & \multicolumn{1}{|c|}{Theorem~\ref{thm:tourn}} & & & & \\ \cline{3-3}\noalign{\smallskip}\noalign{\smallskip}\noalign{\smallskip}
   \end{tabular}
   \caption{A summary of results. For each combination of excluded configurations, we indicate the value of the extremum (the largest size of a triple system on $n$ vertices obeying those exclusions) to the extent that it is known. We also point out the existence of a characterization of the extremal constructions when available. For earlier results we cite the papers, for new ones we refer to the relevant theorems in the present paper.}
\end{table}
\end{center}


\section{The starting case: no triangle at all}
\message{The starting case: no triangle at all}
\label{sec:all}

How large can a triple system $\FF$ on $n$
 vertices be, if it contains no triangle at all? Here is a construction originally suggested by
 Andr\'as Gy\'arf\'as as a conjectured optimum. Let $n \ge 3$ be given, and let $k$ be a positive integer such that $2k < n$. Partition the set $V$ of $n$ vertices into $k$ pairs $A_1,\ldots,A_k$ and a set $B$ of size $n-2k$. Let $\FF_{n,k}$ consist of all $k(n-2k)$ triples of the form $A_i \cup \{b\}$ where $1 \le i \le k$ and $b \in B$. Then $\FF_{n,k}$ contains no triangles. To maximize the size $k(n-2k)$ for a given $n$, we choose $k$ to be as close as possible to $\frac{n}{4}$ (this choice is unique unless $n \equiv 2\, (\mathrm{mod}\, 4)$, in which case the two choices $k = \frac{n \pm 2}{4}$ are equally good). With this optimal choice of $k$ we have $|\FF_{n,k}| = \lfloor \frac{1}{8}n^2 \rfloor$.

 The optimality of this construction among all triangle-free triple systems was 
 proved by Gy\H{o}ri~\cite{GyE}. 
Here we reproduce the proof and extend the analysis to obtain also a characterization of the extremal constructions.

\begin{thm} \label{thm:all}
$$\ex(n, \AAA\BB\CC\DD)=\lfloor \frac{1}{8}n^2 \rfloor$$
and equality is attained by a triple system $\FF$ if and only if it is of the form $\FF_{n,k}$ with $k$ chosen optimally as indicated above.
\end{thm}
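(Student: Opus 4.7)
My plan is to prove the bound and the characterization by induction on $n$, with the bulk of the work going into a structural analysis of the link graphs $\FF[v]$. The key structural lemma to establish is that \emph{for every vertex $v$, $\FF[v]$ is a star forest}, i.e.\ a disjoint union of stars. Graph-triangle-freeness of $\FF[v]$ follows from $\CC$-freeness of $\FF$: a triangle $y_1y_2,\ y_2y_3,\ y_1y_3$ in $\FF[v]$ lifts to three triples $\{v,y_i,y_j\}\in\FF$ that share the vertex $v$ and pairwise share pairs through $v$, giving $\CC$ on $\{v,y_1,y_2,y_3\}$. The absence of a path $y_1y_2y_3y_4$ of length three in $\FF[v]$ follows from $\DD$-freeness: the triples $\{v,y_1,y_2\},\{v,y_2,y_3\},\{v,y_3,y_4\}\in\FF$ form configuration $\DD$ with central triple $\{v,y_2,y_3\}$. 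A graph that is both triangle-free and $P_4$-free is precisely a disjoint union of stars.

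Next I would extract from $\BB$- and $\AAA$-freeness the constraints that make the counting work. Suppose a pair $\{x,y\}$ has degree $k\ge 2$ in $\FF$, with leaves $z_1,\ldots,z_k$ (so $\{x,y,z_i\}\in\FF$). The following extra triples are all excluded: (i) $\{x,z_i,w\}$ for $w\ne y$, which would combine with $\{x,y,z_i\}$ and $\{x,y,z_j\}$ (for $j\ne i$) to give $\DD$; (ii) $\{z_i,z_j,w\}$ for any $w$, which would combine with $\{x,y,z_i\}$ and $\{x,y,z_j\}$ to give $\BB$. A consequence is that every triple contains at most one \emph{tight} pair (a pair of degree $\ge 2$), so $\FF$ partitions into tight triples and loose triples (all pairs of degree $1$). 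A similar case check using $\AAA$-freeness shows that the graph $H$ on $V$ whose edges are the tight pairs is itself triangle-free, since mutually tight pairs $\{x,y\},\{y,z\},\{x,z\}$ would let one pick a leaf from each of the three stars and assemble an $\AAA$ on six vertices.

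Armed with these structural facts I would prove the upper bound by induction on $n$. In the easy case there is a vertex $v$ with $|\FF[v]|\le\lfloor(n-1)/4\rfloor$, and removing $v$ closes the argument via the identity $\lfloor(n-1)^2/8\rfloor+\lfloor(n-1)/4\rfloor\le\lfloor n^2/8\rfloor$. The main obstacle is the opposite case, when the minimum degree exceeds $(n-1)/4$. Here I expect to use the per-vertex star-forest inequality $|\FF[v]|+s(v)\le n-1$, where $s(v)$ is the number of nontrivial star components of $\FF[v]$, summed over all $v$ and combined with the fact that tight pairs through a common vertex have pairwise disjoint leaf sets and contribute disjoint stars to the link. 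A naive sum yields only $|\FF|\le n(n-1)/4$, so the real challenge is to squeeze out the extra factor of two by fully exploiting the leaf-disjointness conditions from $\BB$- and $\AAA$-freeness; a likely route is to remove an entire tight pair together with its star in one inductive step on $n-2$ vertices, or to use a weighted sum of the link bounds with coefficients calibrated separately for tight-pair vertices and for leaf vertices. The characterization of extremality then follows by tracing equality through each inequality: equality forces every link to be either a single large star (for tight-pair vertices) or a matching (for pure leaf vertices), and the remaining rigidity leaves only the partition-based construction $\FF_{n,k}$ with $k$ chosen to maximize $k(n-2k)$.
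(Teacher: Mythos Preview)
Your structural observations are largely correct and a reasonable starting point: each link $\FF[v]$ is indeed a star forest (this is exactly the conjunction of $\CC$- and $\DD$-freeness), and your constraints (i), (ii) on triples around a tight pair are valid after minor case-checks. But the proof is genuinely incomplete at the decisive point. You explicitly concede that the naive link sum only yields $|\FF|\le n(n-1)/4$ and that ``the real challenge is to squeeze out the extra factor of two,'' and you then offer only speculation (``a likely route is \ldots'') without carrying either route through. Neither suggestion is straightforward: removing a tight pair $\{x,y\}$ does \emph{not} in general remove all triples through $x$ or $y$ (there may be further triples $\{x,a,b\}$ with $a,b$ outside the star of $\{x,y\}$, living in another component of the star forest $\FF[x]$), so the $n\to n-2$ induction does not close as stated; and the ``weighted sum of link bounds'' is left entirely unspecified. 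As written, the argument stalls a factor of two away from the target.

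The paper bypasses this with a single idea you are missing. The key observation is that $\CC\DD$-freeness is equivalent to every triple having at least two \emph{own pairs} (pairs lying in no other triple). Choosing exactly two own pairs from each triple yields a graph $G$ with $|E(G)|=2|\FF|$; the full $\AAA\BB\CC\DD$-freeness then forces $G$ to be triangle-free (three own pairs of three distinct triples forming a triangle would be a hypergraph triangle, and two of the three edges cannot come from the same triple since the third pair of that triple was deliberately excluded from $G$). Tur\'an's theorem gives $2|\FF|\le\lfloor n^2/4\rfloor$ in one line, and the characterization of equality follows from the (near-)extremal bipartite structure of $G$. This own-pair graph device is what converts your correct local structure into the global bound; your link-based accounting, as it stands, does not.
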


\begin{proof}  
Let ${\FF}$ be a triple system not containing any of $\AAA, \BB, \CC, \DD.$
First observe that not containing $\CC$ and $\DD$ implies (in fact, is
equivalent to) that every triple contains at least two pairs which are its own.
For those triples containing three own pairs choose two that we single out as
special. Consider the graph $G$ on $V({\FF})$ formed by the own pairs and
the special own pairs of the triples containing two or three own pairs,
respectively. We claim that $G$ is triangle-free. Indeed, if a triangle were
formed by own pairs of three different triples, then we would have a triangle
in ${\FF}$ in the hypergraph sense, that is, one of our forbidden
configurations would appear. Thus at least two edges of a triangle in $G$
should be own pairs of the same triple $F$. However, in that case the third
edge of the triangle is also contained in $F$, thus it cannot be the own pair
of another triple $F'$. But it also cannot be an edge of $G$ as a third own
pair of $F$ since every triple in ${\FF}$ is represented by exactly two pairs
in $G$. Therefore a triangle in $G$ cannot occur.
This implies by Tur\'an's theorem that $|E(G)|\leq \frac{1}{4}n^2$. Since
$|E(G)|=2|\FF|$, we got $|\FF|\leq \lfloor \frac{1}{8}n^2 \rfloor.$

It remains to show that if $|\FF| = \lfloor \frac{1}{8}n^2 \rfloor$ then $\FF$ must be of the form $\FF_{n,k}$. Note that $|E(G)| = 2 \lfloor \frac{1}{8}n^2 \rfloor$, and this number equals $\lfloor \frac{1}{4}n^2 \rfloor$ unless $n \equiv 2\,(\mathrm{mod}\, 4)$, in which case it equals $\lfloor \frac{1}{4}n^2 \rfloor - 1$. By known characterizations
(see~\cite{ErdRadem})
 of the extremal and next-to-extremal cases for Tur\'an's theorem, $G$ must be bipartite with parts $A$ and $B$ satisfying one of the following:
\begin{itemize}
    \item[(a)] $n \not\equiv 2\,(\mathrm{mod}\, 4)$, $|A|$ and $|B|$ differ by at most $1$ and $G$ is complete bipartite.
    \item[(b)] $n \equiv 2\,(\mathrm{mod}\, 4)$, $|A|=|B|$ and $G$ is complete bipartite minus one edge.
    \item[(c)] $n \equiv 2\,(\mathrm{mod}\, 4)$, $|A|$ and $|B|$ differ by $2$ and $G$ is complete bipartite.
\end{itemize}

Each triple in $\FF$ contributes two own pairs to $G$, hence contains two vertices in $A$ and one in $B$ or vice versa. We claim that all triples in $\FF$ must be of the same kind, i.e., the same part contains two vertices from each triple. Assume, for the sake of contradiction, that there are triples $F_A, F_B \in \FF$ such that $F_A \cap A = \{x,x'\}$ and $F_B \cap B = \{y,y'\}$. Out of the four pairs in $\{x,x'\} \times \{y,y'\}$, at most one is a non-edge of $G$. Hence w.l.o.g.\ $x'y, yx, xy'$ form a path in $G$. Say these edges of $G$ represent the triples $F_1,F_2,F_3$ in $\FF$, respectively. Clearly, $F_1,F_2$ and $F_3$ cannot all be the same triple, so w.l.o.g.\ $F_1 \neq F_2$, and neither $F_1$ nor $F_2$ contains $\{x,x'\}$. But then $F_A,F_1,F_2$ are three distinct triples in $\FF$ which form a forbidden triangle.

Thus, we assume w.l.o.g.\ that every triple in $\FF$ contains two vertices in $A$ and one in $B$. For each vertex $b \in B$, its link in $\FF$ consists of disjoint pairs in $A$ (otherwise, we would have an edge of $G$ that belongs to two triples in $\FF$). This gives the upper bound \[ |\FF| \le \lfloor \frac{|A|}{2} \rfloor \cdot |B|.\]
If $|A|$ is odd, this bound is strictly smaller than $\lfloor \frac{1}{8}n^2 \rfloor$. Thus $|A|$ must be even, ruling out case (b) above. Moreover, the bound must hold as an equality, so the link of each $b \in B$ is a partition of $A$ into $k$ pairs $A_1,\ldots,A_k$, where $k = \frac{|A|}{2}$. Furthermore, this partition must be the same for all $b \in B$, otherwise there would be a forbidden triangle. This shows that $\FF$ is of the form $\FF_{n,k}$, and the conditions on the difference between $|A|$ and $|B|$ in cases (a), (c) above imply the optimal choice of $k$.
\end{proof}

\section{Earlier results}
\message{Earlier results}

The four cases where we exclude just one type of 
 triangle creating
 configurations $\AAA,\BB,\CC$, or $\DD$, were already investigated
 in some different context.
In particular, $\ex(n,\AAA)={{n-1}\choose 2}$ for large enough $n$ is proven in~\cite{FF87}
 (as Theorem 3.3). This result also determines that all extremal
constructions are {\em full stars},  $3$-uniform hypergraphs containing all edges through a fixed vertex.
\begin{equation*}\label{eq:AB}
  \ex(n,\AAA)={n-1\choose 2} \text{ for } n \ge n_0.
  \end{equation*}
Observing that this construction also does not contain
configuration $\BB$, we have that the same holds for
 $\AAA \BB$-free hypergraphs, too.
Here (in Section~\ref{sec:AB}) we prove that $n_0=5$ for $\ex(n,\AAA\BB)$.


An early attempt to generalize Tur\'an's theorem to hypergraphs
 (for triple systems) was proposed by Erd\H os and
 Katona, who conjectured that the largest triple system on
 $n$ vertices such that no three distinct triples
 $A,B,C$ satisfy $A\bigtriangleup B \subset C$ is the complete $3$-partite hypergraph with part sizes as equal as possible.
Here $\bigtriangleup$ is the symmetric difference,
 $A\bigtriangleup B=(A\setminus B)\cup (B\setminus A)$.
One can see that (for $A\neq B$) $A\bigtriangleup B \subset C$
 can only occur when $A$ and $B$ have two common vertices and
 the three triples form either a configuration $\BB$ or $\CC$.
The conjecture was verified by Bollob\'as~\cite{BB} thus showing
\begin{equation*}\label{eq:BC}
 \ex(n,\BB\CC)=\lfloor{n\over 3}\rfloor
  \lfloor{{n+1}\over 3}\rfloor \lfloor{{n+2}\over 3}\rfloor .
 \end{equation*}
Frankl and F\"uredi (in \cite{FF83} as Theorem~4) sharpened
 Bollob\'as's theorem
 proving that for $n\geq 3000$ it is sufficient to exclude only
 $\BB$.
Subsequently, a new proof was given by Keevash and Mubayi~\cite{KeeMub}
 who also significantly lowered the constraint for $n$:
\begin{equation*}\label{eq:B}
 \ex(n,\BB)=\lfloor{n\over 3}\rfloor
  \lfloor{{n+1}\over 3}\rfloor \lfloor{{n+2}\over 3}\rfloor
   \quad {\rm for\,\, }n\geq 33,
 \end{equation*}
 with equality only for the balanced complete 3-partite hypergraph.
They also proved that the extremal family is rather stable, namely
 for any $\varepsilon > 0$ there exists  $\delta>0$
 such that if $\HH$ is a $\BB$-free triple system on $n$ vertices
 with at least $(1-\delta)\frac{1}{27}n^3$ edges, then there exists a
 partition of the vertex set of $\mathcal H$ as
 $V({\mathcal H})=V_1\cup V_2\cup V_3$ so that all but at most
 $\varepsilon n^3$ edges of $\mathcal H$ have one vertex in each $V_i$.

The value of $\ex(n,\CC)$ is not known, it is a famous problem
 proposed by Erd\H os and S\'os~\cite{EPSos} that is thought to be
 difficult. Frankl and F\"uredi~\cite{FF84} gave a recursive
 construction based on the design $S_2(6,3,2)$, yielding the best known lower bound $\ex(n,\CC) \ge \frac{2}{7} {n \choose 3} \bigl(1-o(1)\bigr)$. An upper bound with $\frac{1}{3}$ instead of $\frac{2}{7}$ was proved by de~Caen~\cite{deCa} and Sidorenko~\cite{Sido}. A sequence of small improvements of the coefficient $\frac{1}{3}$ followed. The current record, obtained by Falgas-Ravry and Vaughan~\cite{FRV13} using semi-definite programming, is slightly less than $0.287$ (close to but still larger than $\frac{2}{7}$). Thus
\begin{equation*}\label{eq:C}
   \frac{2}{7} {n \choose 3} \bigl( 1-o(1) \bigr) \leq \ex(n,\CC)
      \leq 0.287 {n \choose 3} \bigl( 1+o(1) \bigr).
  \end{equation*}

Configuration $\DD$ is the most restrictive in terms of the largest size of a triple system excluding it: the upper bound $\ex(n,\DD) \le \frac{n(n-1)}{3}$ follows from Theorem~3.8 of Frankl and F\"uredi~\cite{FF87}.
Determining the exact value of $\ex(n,\DD)$ takes more work, which we carry out below in Section~\ref{sec:DBD}.

\section{Excluding $\AAA$ and $\BB$} \label{sec:AB}

Concerning  $\ex(n,\AAA \BB)$ we prove a stronger statement.

Define configuration $\AAA^+$ as a six vertex triple system of four edges
 with $V=\{1,\dots, 6\}, E=\{ 123, 124,135,236\}.$
This is the same as $\AAA$ with the  middle triangle $123$ as an additional triple.
Similarly, configuration $\BB^+$  on five vertices
 is the same as $\BB$ with one more central triple $123$, i.e.,
 $V=\{1,\dots, 5\}, E=\{ 123, 125,134,234\}.$
We obviously have $\ex(n,\AAA \BB)\le\ex(n,\AAA^+\BB^+)$.  Since the full star contains neither configuration $\AAA$ nor configuration $\BB$, the following theorem implies the similar statement for $\ex(n,\AAA \BB)$ in place of $\ex(n,\AAA^+\BB^+)$.

\begin{thm} \label{thm:AB}
\begin{equation*}
 \ex(n,\AAA^+\BB^+)={n-1\choose 2} \text{ for } n \ge 5
\end{equation*}
and the only extremal family is the full star.
\end{thm}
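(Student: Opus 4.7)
My plan is to combine a structural lemma about ``tight'' triples with a short counting argument and a basic fact about intersecting graphs.

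\textbf{Structure Lemma.} Call $T=\{a,b,c\}\in\FF$ \emph{tight} if each of its three pairs lies in some further triple of $\FF$. I would show that any tight $T$ must lie in a $K_4^{(3)}$ on some $\{a,b,c,w\}\subseteq V(\FF)$ in which each of the six pairs has degree exactly $2$. To prove this, pick any extensions $\{a,b,X\},\{a,c,Y\},\{b,c,Z\}\in\FF$: if $X,Y,Z$ are all distinct, the four triples $\{a,b,c\},\{a,b,X\},\{a,c,Y\},\{b,c,Z\}$ form a copy of $\AAA^+$; if exactly two of $X,Y,Z$ coincide (say $X=Y\ne Z$), the same four triples form a copy of $\BB^+$. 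Both being forbidden, any choice of extensions forces $X=Y=Z$, and the arbitrariness of the choice pins the three pairs' degrees down to exactly $2$ and identifies a unique fourth vertex $w$. Re-applying the lemma to the other three tight triples inside this $K_4^{(3)}$ then yields pair-isolation for all six of its pairs.

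\textbf{Counting.} Write $|\FF|=4t+m_2$, where $t$ is the number of $K_4^{(3)}$-components and $m_2$ the number of remaining (non-tight) triples, each of which has at least one own (degree-$1$) pair. Let $\phi$ be the total number of own pairs, $H_2$ the set of heavy pairs outside any $K_4^{(3)}$-component, and $S_2=\sum_{p\in H_2}\deg_\FF(p)$. The four relations
\[
3m_2=\phi+S_2,\qquad \phi\ge m_2,\qquad \phi+6t+|H_2|\le\binom{n}{2},\qquad S_2\le(n-2)|H_2|
\]
eliminate to $|\FF|\le \binom{n-1}{2}+\frac{2t(6-n)}{n}$. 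For $n\ge 7$ this gives $|\FF|\le\binom{n-1}{2}$ with equality only if $t=0$. For $n=6$, pair-isolation shows a $K_4^{(3)}$ leaves only two outside vertices and admits at most four further triples of the form $\{u,e,f\}$, so $t\ge 1$ implies $|\FF|\le 8<10$; again $t=0$ at equality. The base case $n=5$ is handled similarly since a $K_4^{(3)}$ on $5$ vertices admits no further non-tight triples.

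\textbf{Uniqueness.} When $t=0$ and $|\FF|=\binom{n-1}{2}$, the inequality chain tightens to $\phi=m_2=\binom{n-1}{2}$, $|H_2|=n-1$, and every heavy pair has degree exactly $n-2$. Two vertex-disjoint heavy edges $\{a,b\},\{c,d\}$, both of full degree $n-2$, would pull $\{a,b,c\},\{a,c,d\},\{b,c,d\}$ into $\FF$ and make $\{a,b,c\}$ tight, contradicting $t=0$. Hence $H_2$ is an intersecting graph; and since the only intersecting graphs are stars and triangles, while a triangle has only three edges, $|H_2|=n-1\ge 4$ forces $H_2$ to be a star centered at some vertex $v$. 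Each edge $\{v,x\}$ of degree $n-2$ then forces every triple through $\{v,x\}$ into $\FF$, identifying $\FF$ as the full star at $v$. The heart of the proof is the Structure Lemma: it is the pivot after which the counting and the intersecting-family step are routine.
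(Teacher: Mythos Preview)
Your proof is correct, and it takes a genuinely different route from the paper's.

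Both arguments pivot on the same structural fact: if a triple $T$ has all three pairs of degree at least~$2$, then any triple of extensions must collapse to a single vertex $w$, and hence all three pair-degrees equal exactly~$2$. The paper stops here (this is its ``main observation'' about the subfamily $\FF_2$), whereas you push one step further to the full $K_4^{(3)}$ structure with pair-isolation. After that point the two proofs diverge. The paper runs a fractional double-count with weights $w(e,F)=1/\deg_\FF(e)$, obtaining $\binom{n}{2}\ge |\partial\FF|\ge \frac{n}{n-2}|\FF|$ directly; for uniqueness it observes that at equality every triple has a unique own pair and invokes the Bollob\'as set-pair theorem to force the star. Your argument instead eliminates the four linear relations to get the explicit bound $|\FF|\le \binom{n-1}{2}+\frac{2t(6-n)}{n}$, and for uniqueness you show that two vertex-disjoint full-degree heavy pairs would create a tight triple, so the heavy-pair graph is intersecting, hence a star.

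The paper's weight argument is a little slicker and shorter, and plugging into Bollob\'as is efficient. Your version is more self-contained (no black box), and the explicit $t$-dependence in your inequality makes the role of $K_4^{(3)}$-components transparent; it also lets you see at a glance why $n=6$ is the boundary case. Both proofs need the same ad hoc treatment of $n\in\{5,6\}$.
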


\begin{proof}
Let $\FF\subseteq {[n]\choose 3}$ be an optimal $\AAA^+\BB^+ $-free family.
Define $\FF_1\subseteq \FF$ as the family of triples with an own pair,
$\FF_1:=\left\{ xyz\in \FF: \min\{ \deg_{\FF}(xy), \deg_{\FF}(yz), \deg_{\FF}(zx) \}=1   \right\}$, and let $\FF_2:= \FF\setminus \FF_1$.
Our main observation is that for every $F=xyz\in \FF_2$ we have $\deg_{\FF}(xy)=\deg_{\FF}(yz)=\deg_{\FF}(zx)=2$. Indeed,
by definition, all these three pairs have degrees  at least $2$, so one can find triples
$xya$, $yzb$, $zxc\in \FF$ such that $\{ x,y,z\} \cap \{ a,b,c\}= \emptyset$.
These three triples together with $xyz$ form a configuration $\AAA^+$ or $\BB^+$ unless $a=b=c$. We obtain that $a$ is unique so $\deg_\FF(xy)=2$, and similarly for $b$ and $c$.

Define a nonnegative function $w(e,F):{[n]\choose 2}\times {[n]\choose 3}\to \mathbb R$ as follows.
It is always $0$ except when $e\subset F$, $F\in \FF$, when it is $\frac{1}{\deg_\FF(e)}$.
Note that here $\frac{1}{\deg_\FF(e)}\geq \frac{1}{n-2}$.
For every pair $e\in \partial \FF$ we have
 $$
   w(e):=\sum_{F\in \FF} w(e,F)= \sum_{F: e\subset F\in \FF} \frac{1}{\deg_{\FF}(e)}=1.
   $$
For every $F\in \FF_1$, $F=xyz$ we have
\begin{equation*}\label{eq:41}
   w(F):=\sum_{e\subset F, |e|=2}  w(e,F)=  \frac{1}{\deg_\FF (xy)} +\frac{1}{\deg_\FF (yz)} +\frac{1}{\deg_\FF (zx)}
      \geq 1+ \frac{2}{n-2}= \frac{n}{n-2}.
  \end{equation*}
For each $F\in \FF_2$ we  have $w(F)=\frac{3}{2}$. Assuming that $n \ge 6$, this is at least $\frac{n}{n-2}$, hence
\begin{equation*}\label{eq:42}
  {n\choose 2}\geq |\partial \FF|= \sum_{e \in \partial \FF} w(e) = \sum_e \sum_{F\in \FF} w(e,F)=
      \sum_{F\in \FF} w(F)\geq \frac{n}{n-2}|\FF|.
  \end{equation*}
This yields $ {n-1\choose 2}\geq |\FF|$. For $n\geq 7$ we infer that equality can hold only if $\FF=\FF_1$, and each $F\in \FF$ has a unique own $2$-subset.
The set-pair theorem of Bollob\'as~\cite{Boll65} gives that $\FF$ is a full star.

In case of $\FF_2=\emptyset$ the above proof works for all $n\geq 5$.
If $n=6$ and $\FF_2\neq \emptyset$ then it contains a $\binom{[4]}{3}$. The additional triples must contain $\{ 5,6\}$, hence there are at most four more of them ($x56$ with $1\leq x\leq 4$). We get $|\FF|\leq {4\choose 3}+4< {5\choose 2}$.
Finally, in the case  $n=5$ and $\FF_2\neq \emptyset$ the family $\FF$ consists of the four triples of $\binom{[4]}{3}$, hence  $|\FF|< {4\choose 2}$.
\end{proof}

\section{Excluding $\DD$ (or $\BB$ and $\DD$)} \label{sec:DBD}
As mentioned, the upper bound $\ex(n,\DD) \le \frac{n(n-1)}{3}$ follows from~\cite{FF87}. Here we conduct a careful analysis showing that this bound is either tight or nearly tight, depending on the residue of $n$ modulo $12$. The same analysis also determines $\ex(n,\BB\DD)$. The following theorem summarizes our findings.

\begin{thm} \label{thm:DBD}
There exists $n_0$ such that for all $n \ge n_0$ we have:
\begin{equation*}
\ex(n,\DD)=\ex(n,\BB\DD)=
    \begin{cases}
        \dfrac{n(n-1)}{3} & \text{if } n \equiv 1,4\, (\mathrm{mod}\, 12)\\
        \dfrac{n(n-1)}{3} - 4 & \text{if } n \equiv 7,10\, (\mathrm{mod}\, 12)\\
        \dfrac{n(n-2)}{3} & \text{if } n \equiv 0,2,3,8\, (\mathrm{mod}\, 12)\\
        \dfrac{n(n-2)}{3} - 1 & \text{if } n \equiv 5,11\, (\mathrm{mod}\, 12)
    \end{cases}
\end{equation*}
\[ \ex(n,\DD) = \frac{n(n-2)}{3} \text{ and } \ex(n,\BB\DD) = \frac{n(n-2)}{3} - 1 \text{ if } n \equiv 6,9\, (\mathrm{mod}\, 12).\] Without the requirement $n \ge n_0$, the indicated quantities are always valid as upper bounds, and as exact values for $n \equiv 0,1,3,4\, (\mathrm{mod}\, 12)$.
\end{thm}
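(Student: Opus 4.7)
The upper bound $\ex(n,\DD)\le\frac{n(n-1)}{3}$ recalled above is the starting point. The plan is to first develop a structural decomposition of $\DD$-free triple systems that both re-derives this bound and drives the residue-by-residue refinements, and then to match the refinements with explicit constructions that also turn out to be $\BB$-free in all but two residue classes modulo $12$.

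The key local observation is that if a triple $F=\{a,b,c\}\in\FF$ has pairs $\{a,b\}$ and $\{a,c\}$ each contained in another triple, say $F_1=\{a,b,x\}$ and $F_2=\{a,c,y\}$, then $\DD$-freeness forces $x=y$, since otherwise $F,F_1,F_2$ would span five vertices and realize $\DD$. Unfolding this shows that every connected component of the \emph{adjacency graph} $G$ of $\FF$ (triples joined iff they share a pair) has one of four types: (i) an isolated triple; (ii) a \emph{sunflower} $\{a,b,c_1\},\dots,\{a,b,c_k\}$ of $k\ge 2$ triples through a common pair; (iii) three of the four triples of $\binom{S}{3}$ for some $S\in\binom{V}{4}$; or (iv) the full $K_4^{(3)}$ on some $S\in\binom{V}{4}$. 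The pair-sets used by distinct components are pairwise disjoint. Writing $s$, $\sum_i k_i$, $3t$ and $4q$ for the triples contributed by components of types (i)--(iv) and $\sigma$ for the number of sunflower components, counting covered pairs gives
\begin{equation*}
2\binom{n}{2}-3|\FF|\ \ge\ 3s+\sum_i k_i+2\sigma+3t,
\end{equation*}
so $|\FF|\le\frac{n(n-1)}{3}$, with equality iff every component is a $K_4^{(3)}$; i.e., iff $\FF$ is a Steiner system $S(2,4,n)$. By Hanani's theorem $S(2,4,n)$ exists iff $n\equiv 1,4\pmod{12}$, and any such $\FF$ is automatically $\BB$-free (two triples sharing a pair lie in a single block, so no external vertex can form the ``$125$''-triple required by $\BB$). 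This settles the theorem for $n\equiv 1,4\pmod{12}$.

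For the remaining residues, the plan is to push the slack inequality further using the integrality of $s,k_i,t,q$ and classical packing results for $K_4$-decompositions of $K_n$. Matching constructions are built by adjoining a small \emph{correction piece} (a sunflower, a triangle component, or a few isolated triples, optionally combined with a Steiner triple system on a leftover set) to a maximum $K_4^{(3)}$-packing of $K_n$, with the correction piece engineered to consume exactly the pairs uncovered by the packing. The type of correction piece depends on $n\pmod{12}$: a single large sunflower for $n\equiv 0,2,3,8\pmod{12}$ yields $\frac{n(n-2)}{3}$; one triangle component for $n\equiv 7,10\pmod{12}$ explains the defect $4$; a sunflower leaving one stray pair accounts for the $-1$ in $n\equiv 5,11\pmod{12}$; and for $n\equiv 6,9\pmod{12}$ the best $\DD$-free correction piece happens to form a $\BB$ with one of the packing blocks, which is the source of the $1$-triple gap between $\ex(n,\DD)$ and $\ex(n,\BB\DD)$. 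The values stated for $n\equiv 0,1,3,4\pmod{12}$ are valid for every $n$ (not only $n\ge n_0$) because the designs they rely on exist for all admissible $n$ in those residues, while the small-$n$ exceptions in the other residues reflect finitely many design non-existences.

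The main obstacle is the twelve-fold case analysis itself: producing a construction meeting the stated value in each residue, checking the matching strengthening of the slack inequality, and separately verifying $\BB$-freeness for the $\BB\DD$ bound. The subtlest point is the $\pm 1$ gap in residues $6,9$, where the unique efficient way to absorb the packing leave inevitably forms a $\BB$ with one of the packing blocks, so the $\BB\DD$-free extremum falls short of the $\DD$-free extremum by exactly one triple.
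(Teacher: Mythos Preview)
Your structural decomposition is correct and coincides with the paper's Observation~\ref{obs:CC}: the contiguity components of a $\DD$-free $\FF$ are exactly what you call types (i)--(iv) (the paper groups (iii) and (iv) under ``cluster'' and calls (ii) a ``$k$-crown''). Your pair-counting slack inequality is also right and recovers the basic bound $|\FF|\le \frac{n(n-1)}{3}$.

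There are two genuine gaps. First, the refined upper bounds do not follow from ``integrality of $s,k_i,t,q$ and classical packing results'' alone. For $n\equiv 7,10\pmod{12}$ you must show the slack is at least $12$, and for $n\equiv 5,11$ and the $\BB\DD$ case $n\equiv 6,9$ you need parity/divisibility obstructions that live at the \emph{vertex} level, not the pair level. The paper gets these via the link description (Observation~\ref{obs:Lk}) and, for $n\equiv 7,10$, by looking at the complement $\overline{G}$ of the union of cluster-shadows and using that every $\deg_{\overline G}(v)\equiv 0\pmod 3$ together with $|E(\overline G)|\equiv 3\pmod 6$. Your slack inequality sees none of this.

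Second, several of your stated constructions are wrong. For $n\equiv 7,10\pmod{12}$ a single ``triangle component'' (type (iii)) on a $4$-set $S$ leaves $\overline G=K_4$ with $|E(\overline G)|=6$, but one needs $|E(\overline G)|\equiv 3\pmod 6$, so $G$ has no $K_4$-decomposition; the paper instead takes $\overline G$ to be a triangular prism (nine edges) and puts two isolated triples in its two triangles. Similarly, ``a single large sunflower'' does not uniformly work for $n\equiv 0,2,3,8\pmod{12}$: for $n\equiv 0,2,8$ the degree condition $\deg_{\overline G}(v)\equiv n-1\pmod 3$ is violated by the book graph $B_k$ for most vertices, and the paper uses quite different leaves (a perfect matching for $n\equiv 2,8$; a $1$-factor of triangles for $n\equiv 0,3$). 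The right tool here is Gustavsson's theorem (Theorem~\ref{thm:Gu}), which lets you prescribe $\overline G$ freely subject to the two divisibility conditions and then decompose $G$ into $K_4$'s; picking the leave correctly residue-by-residue is where the actual work lies.
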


\subsection{Preparations for the proof}\label{ss:51}
In preparation for the proof, we develop or recall some tools that will be used in it. Let $\FF$ be a triple system on a vertex set $V$, $|V|=n$. We call two triples in $\FF$ \emph{contiguous} 
 if they share two vertices. The contiguity relation defines a graph on $\FF$, which allows us to partition $\FF$ uniquely into its connected components with respect to contiguity: \[ \FF = \bigcup_{i=1}^r \FF_i \] Let us denote by $\partial \FF_i$ the shadow of $\FF_i$, i.e., the set of those pairs contained in some triple in $\FF_i$. Viewing the $\partial \FF_i$ as graphs, they are edge-disjoint.

Two special types of $\FF_i$ serve as building blocks of $\DD$-free triple systems. One, that we call a \emph{cluster}, consists of $3$ or $4$ triples on $4$ vertices; its shadow is a $K_4$ graph. The other, that we call a $k$-\emph{crown}, consists of some number $k \ge 1$ of triples, all sharing a fixed pair of vertices $x$ and $y$; its shadow is a book graph $B_k$ on $k+2$ vertices, consisting of $k$ triangles sharing an edge. In the case $k=1$ we get a single triple (triangle). The following easy-to-check fact is recorded here for later reference.

\begin{obs} \label{obs:CC}
A triple system $\FF$ is $\DD$-free if and only if each of its connected components $\FF_i$ is either a cluster or a $k$-crown for some $k \ge 1$.
\end{obs}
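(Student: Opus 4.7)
The statement is an ``if and only if.'' For the backward direction, a copy of $\DD$ has five vertices and its three triples form a path of length two in the contiguity graph, so every embedded $\DD$ must lie inside a single contiguity component. It therefore suffices to verify that neither a cluster nor a $k$-crown can contain $\DD$: a cluster has only four vertices (too few for $\DD$), and in a $k$-crown every two triples meet in exactly the common pair, whereas $\DD$ requires two of its triples to meet in a single vertex.

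For the forward direction, assume $\FF$ is $\DD$-free and fix a component $\FF_i$ with $|\FF_i|\ge 2$. Pick contiguous triples $T_1=\{a,b,c\}$, $T_2=\{a,b,d\}$ in $\FF_i$. If every triple of $\FF_i$ contains the pair $\{a,b\}$, then $\FF_i$ is a $k$-crown and we are done. Otherwise, by connectedness of $\FF_i$, one can find contiguous triples $S_1,S_2\in\FF_i$ with $\{a,b\}\subseteq S_1$ but $\{a,b\}\not\subseteq S_2$. The goal is to show $\FF_i\subseteq {W \choose 3}$ where $W=\{a,b,c,d\}$, which will make it a cluster.

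The engine is the following lemma: if $T_1,T_2\in\FF$ share $\{a,b\}$ with third vertices $c,d$ and $T_3\in\FF$ is contiguous with $T_1$ via a pair other than $\{a,b\}$, then $T_3\subseteq W$. The proof is a one-case check: WLOG $T_3=\{a,c,e\}$, and the triples $T_2,T_1,T_3$ form a $\DD$ centered at $T_1$ unless $|T_2\cap T_3|=2$, which forces $e=d$. Applying this lemma to $S_1,S_2$---after a short argument that $S_1\in\{T_1,T_2\}$ may be assumed WLOG, since otherwise $T_1,S_1,S_2$ themselves form a $\DD$---produces a third triple of $\FF_i$ lying on $W$. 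A second application of the lemma, combined with a direct $\DD$-creation using the new triple on $W$ to exclude any triple of the form $\{a,b,e\}$ with $e\notin\{c,d\}$, then forces every triple of $\FF_i$ into ${W \choose 3}$ by induction on contiguity-distance from $T_1$. The main obstacle is the bookkeeping: at each induction step one needs two triples of $\FF_i$ sharing the relevant pair; for the three pairs $\{b,c\},\{b,d\},\{c,d\}$---each contained in only one of the three initial triples on $W$---the remedy is to use a different triple on $W$ as the ``third party'' creating a $\DD$, which still blocks escapes through those pairs.
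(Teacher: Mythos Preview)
Your proof is correct in structure and essentially complete; the paper itself records this observation as ``easy-to-check'' and gives no argument, so there is nothing to compare against.

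One small imprecision: your justification that $S_1\in\{T_1,T_2\}$ ``since otherwise $T_1,S_1,S_2$ themselves form a $\DD$'' is not literally true in every sub-case. If $S_1=\{a,b,e\}$ with $e\notin\{c,d\}$ and, say, $S_2=\{a,c,e\}$, then $T_1,S_1,S_2$ sit on only the four vertices $\{a,b,c,e\}$ and do not form a $\DD$. The clean fix is to apply your own engine lemma twice: from the pair $(S_1,T_1)$ sharing $\{a,b\}$ you get $S_2\subseteq\{a,b,e,c\}$, and from $(S_1,T_2)$ you get $S_2\subseteq\{a,b,e,d\}$; intersecting gives $S_2\subseteq\{a,b,e\}$, hence $S_2=\{a,b,e\}\supseteq\{a,b\}$, contradiction. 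So indeed $S_1\in\{T_1,T_2\}$, and the rest of your induction goes through as written.
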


Thus, constructing a $\DD$-free triple system $\FF$ on $V$ amounts to packing edge-disjoint copies of $K_4$ and some $B_k$'s in the complete graph $K_n$. As we are interested in the largest possible $\FF$, we may assume that each copy of $K_4$ contributes $4$ triples to $\FF$. The ratio $\frac{|\FF_i|}{|E(\partial \FF_i)|}$ is $\frac{2}{3}$ in each cluster and less than $\frac{1}{2}$ in each crown. Hence we would like to pack in $K_n$ as many copies of $K_4$ as we can, and use the leftover edges, if any, to add copies of some $B_k$'s if possible. The maximum number of copies of a fixed graph $H$ that can be packed in $K_n$ was determined for large enough $n$ by Caro and Yuster~\cite{CY}. Their main tool, which will also be ours, was the following decomposition theorem of Gustavsson, which extends Wilson's theorem (see Theorem~\ref{thm:W} below) from complete graphs to sufficiently dense ones.

\begin{thm}[Gustavsson~\cite{Gu}, Glock et al.~\cite{GKLMO}] \label{thm:Gu}
For every fixed graph $H$ there exist $n_0 = n_0(H)$ and $\varepsilon = \varepsilon(H) > 0$ such that  the following holds for all $n \ge n_0$ and every graph $G$ on $n$ vertices with all vertex degrees greater than $(1 - \varepsilon)n$:\\ There exists a decomposition of the edge set $E(G)$ into copies of $H$ if and only if the number of edges of $H$ divides that of $G$, and the greatest common divisor of the vertex degrees in $H$ divides that of the vertex degrees in $G$.
\end{thm}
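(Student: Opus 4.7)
The plan is to prove the matching upper and lower bounds residue by residue modulo $12$, treating $\DD$-free and $\BB\DD$-free in parallel. The starting point for the upper bound is Observation~\ref{obs:CC}: any $\DD$-free $\FF$ decomposes into $c$ clusters (each a $K_4$ contributing $4$ triples on $6$ shadow-edges) and $m$ crowns of sizes $k_1,\ldots,k_m$ (each a book $B_{k_j}$ contributing $k_j$ triples on $2k_j+1$ shadow-edges), with edge-disjoint shadows. The pair-weighting identity $\sum_{F}\sum_{e\subset F}\frac{1}{\deg_\FF(e)}=|\partial\FF|$ simplifies (using that $\deg_\FF(e)=2$ for cluster edges, $k_j$ for the spine of a $k_j$-crown, and $1$ for page edges) to
\[
 3\,|\FF|\;=\;2\,|\partial\FF|\;-\;\sum_{j=1}^m k_j\;-\;2m,
\]
so $|\FF|\le\frac{n(n-1)}{3}-\frac{1}{3}\bigl(2u+\sum_j k_j+2m\bigr)$ with $u=\binom{n}{2}-|\partial\FF|$, recovering the universal bound from~\cite{FF87}. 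For the residue-specific refinements I would lower-bound $2u+\sum k_j+2m$ by a modular argument: at each $v$, $\deg_{\partial\FF}(v)$ is a sum of $3$'s (clusters), $(k_j+1)$'s (spines), and $2$'s (pages), and $n-1-\deg_{\partial\FF}(v)\ge 0$. Coupling this with $|\partial\FF|\bmod 6$ and $(n-1)\bmod 3$, a case split by $n\bmod 12$ forces the deficit to match the claimed value; for example, when $n\equiv 7,10\pmod{12}$ every shadow-degree is $\equiv 0\pmod 3$, so a leftover of only three edges would have to be a triangle whose vertices have degree $2\not\equiv 0\pmod 3$ (impossible), and a short case check then yields deficit $\ge 4$.

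For the matching lower bounds I would invoke Gustavsson's theorem (Theorem~\ref{thm:Gu}) applied to $K_n\setminus R$ for a carefully chosen remainder $R\subseteq K_n$. In each residue class $R$ is selected so that (i)~$R$ is the vertex-disjoint union of triangles (producing $1$-crowns), possibly with a near-perfect matching or a small star (in residues where pure triangles do not fit the degree/edge constraints), and---only in residues $n\equiv 6,9\pmod{12}$ and only for the $\DD$-bound---a single book $B_{k^\ast}$ of size $k^\ast\ge 2$ (producing a $k^\ast$-crown that contributes one extra triple); (ii)~the complement $K_n\setminus R$ satisfies the divisibility hypotheses of Gustavsson (every vertex degree divisible by $3$, and $6\mid|E(K_n\setminus R)|$). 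For $n\ge n_0$ Gustavsson's theorem yields a $K_4$-decomposition of $K_n\setminus R$, which together with the components of $R$ produces a $\DD$-free family of the claimed size. When $R$ contains no $B_k$ with $k\ge 2$ the construction is also $\BB\DD$-free: a $\BB$ configuration would require two triples sharing a pair (hence both in a common cluster) together with a third triple sharing different single vertices with them, and a short check shows this forces an edge of the third triple to lie inside the cluster's shadow, contradicting edge-disjointness of component shadows. For $n\equiv 0,1,3,4\pmod{12}$ the construction works for all $n$ via classical design theory (Hanani-type theorems on $K_4$-decomposability of $K_n$ and of $K_n$ minus a triangle factor).

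The technical heart of the proof is the exceptional residues $n\equiv 6,9\pmod{12}$ where $\ex(n,\DD)-\ex(n,\BB\DD)=1$. The two constructions above (with and without the $k^\ast$-crown in $R$) differ by exactly one triple. For the $\BB\DD$ upper bound I must show that the extra triple is always paid for by a $\BB$. The key observation is that a $k^\ast$-crown with spine $\{x,y\}$ and pages $z_1,\ldots,z_{k^\ast}$ forces every pair $\{z_i,z_j\}$ ($i\ne j$) to lie outside the crown's shadow; in a family saturating the vertex-degree bounds, such a pair is covered by a cluster-triple $\{z_i,z_j,w\}$, which together with the two crown triples $\{x,y,z_i\},\{x,y,z_j\}$ forms a $\BB$. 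Turning this structural observation into a clean matching upper bound for $\BB\DD$-free is the main obstacle. Small cases $n<n_0$ outside the "exact for all $n$" residues are handled by the universal upper bound $\frac{n(n-1)}{3}$ together with case-specific constructions or direct verification.
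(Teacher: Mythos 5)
Your proposal does not prove the statement you were asked to prove. The statement is Theorem~\ref{thm:Gu} itself, i.e.\ Gustavsson's decomposition theorem (in the form also established by Glock, K\"uhn, Lo, Montgomery and Osthus): for every fixed $H$ there are $n_0(H)$ and $\varepsilon(H)>0$ such that any graph $G$ on $n\ge n_0$ vertices with minimum degree above $(1-\varepsilon)n$ admits an $H$-decomposition exactly when the two divisibility conditions hold. What you have written is instead an outline of a proof of Theorem~\ref{thm:DBD} (the exact values of $\ex(n,\DD)$ and $\ex(n,\BB\DD)$ by residue mod $12$), and in the middle of it you explicitly \emph{invoke} Theorem~\ref{thm:Gu} as a black box (``I would invoke Gustavsson's theorem (Theorem~\ref{thm:Gu}) applied to $K_n\setminus R$\ldots''). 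Relative to the assigned statement this is circular: you assume the very theorem to be proved. None of the ingredients you develop --- Observation~\ref{obs:CC}, the pair-weighting identity, the modular analysis of shadow degrees, Hanani's existence results --- bears on the decomposition threshold statement for a general fixed graph $H$ under a $(1-\varepsilon)n$ minimum-degree hypothesis.

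For calibration: the paper does not prove Theorem~\ref{thm:Gu} either; it is quoted from Gustavsson's thesis~\cite{Gu} and from Glock et al.~\cite{GKLMO}, whose proofs rely on heavy machinery (fractional relaxations and iterative absorption, in the modern treatment) and run to very great length. No elementary counting or residue-class argument of the kind you sketch can yield the ``if'' direction of that theorem; the ``only if'' direction is the easy divisibility observation, but that is a negligible part of the statement. If your task was in fact Theorem~\ref{thm:DBD}, your sketch is broadly aligned with the paper's strategy (Observation~\ref{obs:CC}/\ref{obs:Lk} for upper bounds, Theorem~\ref{thm:Gu} plus chosen remainders $\overline{G}$ for constructions, and a $\BB$-forcing argument for the residues $n\equiv 6,9\pmod{12}$), but as a proof of Theorem~\ref{thm:Gu} it is a complete miss.
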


We will apply this theorem to $H=K_4$. In this case, the divisibility conditions become: $6\vert |E(G)|$ and $3\vert \deg_G(v)$ for all $v \in V$. These conditions are satisfied by $G=K_n$ if and only if $n \equiv 1,4\, (\mathrm{mod}\, 12)$. For all (not just large enough) $n \equiv 1,4\, (\mathrm{mod}\, 12)$, a full packing of copies of $K_4$ in $K_n$ is known to exist (Hanani~\cite{Hanani}). When such a full packing does not exist, we will use a decomposition into $K_4$'s of a large subgraph $G$ of $K_n$, specified by describing its complement $\overline{G}$. In terms of $\overline{G}$ the divisibility conditions become:
\[ |E(\overline{G})| \equiv {n \choose 2}\, (\mathrm{mod}\, 6) \text{ and } \deg_{\overline{G}}(v) \equiv n-1\, (\mathrm{mod}\, 3) \text{ for all } v \in V.\] The graphs $\overline{G}$ that we will specify have constant maximum degree, which ensures that all vertex degrees in $G$ will be greater than $(1 - \varepsilon)n$ for large enough $n$. The $K_4$-decomposition of $G$ provided by Theorem~\ref{thm:Gu}, plus some edge-disjoint $B_k$'s in $\overline{G}$ when available, will naturally define a large $\DD$-free triple system $\FF$. In most cases this $\FF$ will also be $\BB$-free; when this is not the case we will modify $\overline{G}$ to get a slightly smaller triple system which is $\BB\DD$-free.

Another useful way to analyze triple systems $\FF$ is by describing their links. Recall that the link $\FF[x]$ of a vertex $x$ is the graph on $V \setminus \{x\}$ whose edges are those pairs of vertices that form with $x$ a triple in $\FF$. Here is another simple characterization of $\DD$-free triple systems.

\begin{obs} \label{obs:Lk}
A triple system $\FF$ is $\DD$-free if and only if each of its links $\FF[x]$ is a vertex-disjoint union of triangles and stars (including the cases of an isolated vertex or edge). For such $\FF$, denoting by $\mathrm{st}(\FF[x])$ the number of stars in $\FF[x]$, we have \[ |\FF| = \frac{1}{3} \sum_{x \in V} \bigl( n-1-\mathrm{st}(\FF[x]) \bigr).\]
\end{obs}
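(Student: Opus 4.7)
The plan splits into a structural step and a counting step. The key structural observation is that every copy of $\DD$ has a unique vertex contained in all three of its triples---in the model $\{123,134,235\}$ this is vertex $3$. Consequently, $\FF$ contains a $\DD$ with common vertex $x$ if and only if $\FF[x]$ contains three edges $\{a,b\},\{b,c\},\{c,d\}$ on four distinct vertices, i.e., a copy of $P_4$. Indeed, such edges come from triples $\{x,a,b\},\{x,b,c\},\{x,c,d\}\in\FF$ whose pairwise intersections have sizes $2,2,1$, matching $\DD$; conversely, a $\DD$ with common vertex $x$ gives exactly such a $P_4$ in $\FF[x]$. Hence $\FF$ is $\DD$-free iff every link $\FF[x]$ is $P_4$-free.

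The next step is to classify connected $P_4$-free graphs. Take a longest path $v_0 v_1 \cdots v_k$: since $k \ge 3$ produces a $P_4$, we have $k \le 2$, so the diameter is at most $2$. If the graph contains a cycle, its shortest cycle has length $\le 3$ (any $C_\ell$ with $\ell \ge 4$ contains $P_4$), so there is a triangle $abc$; but if a fourth vertex existed, connectivity would give some edge joining $e \notin \{a,b,c\}$ to a triangle vertex, say $a$, producing the $P_4$ path $e\!-\!a\!-\!b\!-\!c$. So the component is exactly a triangle. Otherwise the graph is a tree of diameter $\le 2$, which forces it to be a star $K_{1,m}$ (including the degenerate cases $K_1$ and $K_2$). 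Thus every component of $\FF[x]$ is a triangle or a star, and the reverse direction of the equivalence is immediate.

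For the counting identity, each triple $\{a,b,c\} \in \FF$ contributes one edge to each of the three links $\FF[a], \FF[b], \FF[c]$, so $3|\FF| = \sum_{x \in V} |E(\FF[x])|$. By the structure just proved, $\FF[x]$ is a vertex-disjoint union on $n-1$ vertices of triangles (each with equally many vertices and edges) and stars $K_{1,k}$ (each with exactly one more vertex than edges). Therefore $|E(\FF[x])| = (n-1) - \mathrm{st}(\FF[x])$, and dividing the displayed sum by $3$ yields the claimed formula.

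The main obstacle I anticipate is the classification of connected $P_4$-free graphs, since everything else in the proof is essentially bookkeeping; but the longest-path argument sketched above reduces it to a short case analysis based on whether a triangle is present.
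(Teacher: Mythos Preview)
Your proof is correct. The paper records this result as an ``observation'' and gives no argument, so there is nothing to compare against; your reduction to $P_4$-freeness of each link, followed by the short classification of connected $P_4$-free graphs as triangles or stars, is exactly the natural justification and matches how the paper uses the statement downstream.
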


Note that this immediately implies the following upper bound:
\begin{equation*}
    \ex(n,\DD) \le
    \begin{cases}
        \dfrac{n(n-1)}{3} & \text{if } n \equiv 1\, (\mathrm{mod}\, 3)\\
        \dfrac{n(n-2)}{3} & \text{if } n\not\equiv 1\, (\mathrm{mod}\, 3)
    \end{cases}
\end{equation*}

In our detailed analysis, depending on the residue of $n$ modulo $12$, we will construct $\DD$ (and even $\BB\DD$)-free triple systems that match this upper bound exactly or up to a small constant difference, and prove their optimality.

\subsection{Proof of Theorem~\ref{thm:DBD}}

\noindent \textbf{Case 1} $n \equiv 1,4\, (\mathrm{mod}\, 12)$

We saw that $\frac{n(n-1)}{3}$ is an upper bound on the size of a $\DD$-free triple system. A matching construction is obtained by placing $4$ triples in each copy of $K_4$ in a full packing in $K_n$, which exists by~\cite{Hanani}. This gives a triple sytem $\FF$ of size $|\FF| = 4 \cdot \frac{1}{6}{n \choose 2} = \frac{n(n-1)}{3}$, which is $\BB$-free as well.

\vspace{10pt}

\noindent \textbf{Case 2} $n \equiv 0,3\, (\mathrm{mod}\, 12)$

We saw that $\frac{n(n-2)}{3}$ is an upper bound on the size of a $\DD$-free triple system in this case. For a matching construction, we use a packing of $\frac{n(n-3)}{12}$ copies of $K_4$ and $\frac{n}{3}$ triangles in $K_n$. Such a packing can be obtained by starting with a full packing of copies of $K_4$ in $K_{n+1}$, which exists by~\cite{Hanani}, and deleting one vertex. Placing $4$ triples in each copy of $K_4$ and one in each triangle gives a $\BB\DD$-free triple system $\FF$ of size $|\FF| = \frac{n(n-3)}{3} + \frac{n}{3} = \frac{n(n-2)}{3}$.

\vspace{10pt}

\noindent \textbf{Case 3} $n \equiv 2,8\, (\mathrm{mod}\, 12)$

Again, $\frac{n(n-2)}{3}$ is an upper bound. For a matching construction for large enough $n$, we use Theorem~\ref{thm:Gu} as follows. We take $\overline{G}$ to be a perfect matching in $K_n$, noting that it satisfies the required conditions: $|E(\overline{G})| = \frac{n}{2} \equiv {n \choose 2}\, (\mathrm{mod}\, 6)$ and $\deg_{\overline{G}}(v) = 1 \equiv n-1\, (\mathrm{mod}\, 3)$. Placing $4$ triples in each copy of $K_4$ in a $K_4$-decomposition of $G$ we get a $\BB\DD$-free triple system $\FF$ of size $|\FF| = \frac{2}{3}\bigl( {n \choose 2} - \frac{n}{2} \bigr) = \frac{n(n-2)}{3}$.

\vspace{10pt}

\noindent \textbf{Case 4} $n \equiv 5,11\, (\mathrm{mod}\, 12)$

We start with the construction for large enough $n$, using Theorem~\ref{thm:Gu}. We take $\overline{G}$ to consist of a matching of $\frac{n-5}{2}$ edges and a star on the remaining $5$ vertices. The required conditions are satisfied: $|E(\overline{G})| = \frac{n-5}{2} + 4 \equiv {n \choose 2}\, (\mathrm{mod}\, 6)$ and $\deg_{\overline{G}}(v) \in \{1,4\} \equiv n-1\, (\mathrm{mod}\, 3)$ for all $v \in V$. Placing $4$ triples in each copy of $K_4$ in a decomposition of $G$, we get a $\BB\DD$-free triple system $\FF$ of size $|\FF| = \frac{2}{3} \bigl( {n \choose 2} - \frac{n+3}{2} \bigr) = \frac{n(n-2)}{3} - 1$.

To see that this is optimal, assume that there is a $\DD$-free triple system $\FF$ of size $|\FF| = \frac{n(n-2)}{3}$. By Observation~\ref{obs:Lk}, we must have $\mathrm{st}(\FF[x]) = 1$ for every vertex $x$. Consider the mapping defined on $V$ by $x \mapsto y$, where $y \ne x$ is the center of the star in the link $\FF[x]$. This is well defined, because there is a unique star and it has a unique center (the star cannot be a single edge because it has $1\, (\mathrm{mod}\, 3)$ vertices). Note also that $x \mapsto y$ implies $y \mapsto x$, so this mapping partitions $V$ into pairs, which is impossible because $n$ is odd.

\vspace{10pt}

\noindent \textbf{Case 5a} $n \equiv 6,9\, (\mathrm{mod}\, 12)$, excluding $\DD$

We saw that  $\frac{n(n-2)}{3}$ is an upper bound in this case. For a matching construction for large enough $n$, we use Theorem~\ref{thm:Gu} as follows. We take $\overline{G}$ to consist of $\frac{n-6}{3}$ vertex-disjoint triangles and a $B_4$ on the remaining vertices. The required conditions are satisfied: $|E(\overline{G})| = (n-6) + 9 \equiv {n\choose 2}\, (\mathrm{mod}\, 6)$ and $\deg_{\overline{G}}(v) \in \{2,5\} \equiv n-1\, (\mathrm{mod}\, 3)$ for all $v \in V$. Placing $4$ triples in each copy of $K_4$ in a decomposition of $G$, one in each triangle in $\overline{G}$, and $4$ in the book $B_4$, we get a $\DD$-free triple system $\FF$ of size $|\FF| = \frac{2}{3}\bigl( {n \choose 2} - n - 3 \bigr) + \frac{n-6}{3} + 4 = \frac{n(n-2)}{3}$.

\vspace{10pt}

\noindent \textbf{Case 5b} $n \equiv 6,9\, (\mathrm{mod}\, 12)$, excluding $\BB\DD$

Note that the construction in Case 5a is not $\BB$-free, because two triples $xyz, xyw$ from the crown on $B_4$ form a $\BB$ configuration with a triple containing $\{z,w\}$. We modify the construction, letting $\overline{G}$ consist of $\frac{n-9}{3}$ vertex-disjoint triangles and $4$ more triangles on the remaining vertices, all sharing one vertex. The modification does not change the number of edges in $\overline{G}$, and the only new degree is $8$, hence the required conditions are satisfied. Placing $4$ triples in each copy of $K_4$ in a decomposition of $G$ and one in each triangle in $\overline{G}$, we get a $\BB\DD$-free triple system $\FF$ of size $|\FF| = \frac{2}{3}\bigl( {n \choose 2} - n - 3 \bigr) + \frac{n-9}{3} + 4 = \frac{n(n-2)}{3} - 1$.

To see that this is optimal, assume that there is a $\BB\DD$-free triple system $\FF$ of size $|\FF| = \frac{n(n-2)}{3}$. Then in each link $\FF[x]$ there must be exactly one star. We claim that none of the $\FF_i$ (the connected components of $\FF$ with respect to contiguity, as defined at the beginning of Subsection~\ref{ss:51}) can be a $k$-crown with $k \ge 2$. Suppose that $\FF_i$ is such a $k$-crown, consisting of the triples $xyz_i$, $i = 1,\ldots,k$. Then the link $\FF[z_1]$ contains $xy$ as an isolated edge, and also $z_2,\ldots,z_k$ as isolated vertices (the latter is due to $\BB$-freeness). This contradicts the uniqueness of the star in each link. By Observation~\ref{obs:CC}, it follows that each $\FF_i$ is either a cluster or a $1$-crown (i.e., a triple contiguous to no other triple). This implies that each link is a vertex-disjoint union of $\frac{n-3}{3}$ triangles and an isolated edge. But this is impossible: if every vertex appears in $\frac{n-3}{3}$ clusters, then the number of clusters is $\frac{1}{4}n \cdot \frac{n-3}{3}$, which is not an integer.

\vspace{10pt}

\noindent \textbf{Case 6} $n \equiv 7,10\, (\mathrm{mod}\, 12)$

We start with the construction for large enough $n$, using Theorem~\ref{thm:Gu}. We take $\overline{G}$ to be the triangular prism graph (two vertex-disjoint triangles with a perfect matching of their vertices) plus $n-6$ isolated vertices. The required conditions are satisfied: $|E(\overline{G})| = 9 \equiv {n \choose 2}\, (\mathrm{mod}\, 6)$ and $\deg_{\overline{G}}(v) \in \{0,3\} \equiv n-1\, (\mathrm{mod}\, 3)$ for all $v \in V$. Placing $4$ triples in each copy of $K_4$ in a decomposition of $G$ and one in each of the two triangles in $\overline{G}$, we get a $\BB\DD$-free triple system $\FF$ of size $|\FF| = \frac{2}{3} \bigl( {n \choose 2} - 9 \bigr) + 2 = \frac{n(n-1)}{3} - 4$.

To see that this is optimal, assume that there is a $\DD$-free triple system $\FF$ of size $|\FF| = \frac{n(n-1)}{3} - 3$. Consider the decomposition of $\FF$ into its connected components $\FF_i$ with respect to contiguity. Let the graph $G$ be the edge-disjoint union of the $K_4$'s that are the shadows of the cluster $\FF_i$'s. Let $\overline{G}$ be the complement of $G$. The divisibility conditions are: \[ |E(\overline{G})| \equiv 3\, (\mathrm{mod}\, 6) \text{ and } \deg_{\overline{G}}(v) \equiv 0\, (\mathrm{mod}\, 3) \text { for all } v \in V. \] The shadows of the crown $\FF_i$'s are edge-disjoint subgraphs of $\overline{G}$, and the number of triples that each of them carries is less than half its number of edges. Thus we have \[ \frac{n(n-1)}{3} - 3 = |\FF| \le \frac{2}{3}|E(G)| + \frac{1}{2}|E(\overline{G})| = \frac{2}{3}{n \choose 2} - \frac{1}{6}|E(\overline{G})| \] from which we deduce that $|E(\overline{G})| \le 18$. Together with the fact that $|E(\overline{G})| \equiv 3\, (\mathrm{mod}\, 6)$, this leaves only three possibilities for $|E(\overline{G})|$, that we can rule out one-by-one:
\begin{itemize}
    \item $|E(\overline{G})| = 3$\\By $\deg_{\overline{G}}(v) \equiv 0\, (\mathrm{mod}\, 3)$, all non-zero degrees must be at least $3$, which is impossible with $3$ edges.
    \item $|E(\overline{G})| = 9$\\All non-zero degrees must be exactly $3$, because there are not enough edges to allow a degree of $6$ or more. Ignoring isolated vertices, $\overline{G}$ is a $3$-regular graph on $6$ vertices. A quick case check shows that such a graph contains at most two triangles. Hence the total number of triples in $\FF$ is at most the number in our construction, namely $\frac{n(n-1)}{3} - 4$.
    \item $|E(\overline{G})| = 15$\\Let $t$ be the number of triples in $\FF$ coming from $\overline{G}$. From $\frac{n(n-1)}{3} - 3 = |\FF| \le \frac{2}{3} \bigl( {n \choose 2} - 15 \bigr) + t$ we get that $t \ge 7$. The only way to obtain $7$ triples from crowns whose edge-disjoint shadows have at most $15$ edges in total is to use one $7$-crown. But then $\overline{G}$ is a book graph $B_7$ (plus isolated vertices), having vertex degrees $2$ and $8$ which are not $0\, (\mathrm{mod}\, 3)$.
\end{itemize}

This completes the proof of Theorem~\ref{thm:DBD}. \qed

\section{Excluding $\CC$ and $\DD$ (and $\AAA$, optionally)}
\message{Excluding $CC$ and $DD$}

\begin{obs}\label{obs:CD}
The triple system $\FF$ avoids  $\CC$ and $\DD$ if and only if each triple $F\in \FF$ contains at least two own pairs.
  \end{obs}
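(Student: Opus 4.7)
The plan is to prove both directions of the equivalence via contrapositives, with the forward direction being a direct inspection of $\CC$ and $\DD$, and the reverse direction proceeding by a short case analysis.

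For the easy direction, I will show that if $\FF$ contains a copy of $\CC$ or $\DD$, then some triple of $\FF$ has at most one own pair. Using the concrete vertex labelings given in the introduction, if $\{124,134,234\}\subseteq\FF$ is a copy of $\CC$, then for the triple $234$ the pairs $24$ and $34$ both have degree at least $2$, so at most the pair $23$ can be own. Similarly, if $\{123,134,235\}\subseteq\FF$ is a copy of $\DD$, then the central triple $123$ has $\deg_\FF(13)\ge 2$ and $\deg_\FF(23)\ge 2$, so at most $12$ is own.

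For the harder direction, I will argue the contrapositive: suppose some $F=xyz\in\FF$ has at most one own pair, so at least two of its pairs have degree $\ge 2$. Without loss of generality, $\deg_\FF(xy)\ge 2$ and $\deg_\FF(yz)\ge 2$, and I can pick triples $F_1\ne F$ containing $xy$ and $F_2\ne F$ containing $yz$. Writing $F_1=\{x,y,w\}$ with $w\ne z$ and $F_2=\{y,z,v\}$ with $v\ne x$, I split into two cases. If $w=v$, then $F,F_1,F_2$ live on the four vertices $\{x,y,z,w\}$ and one checks they are three distinct triples each containing $y$, giving a copy of $\CC$. If $w\ne v$, then I verify that the five vertices $x,y,z,w,v$ are pairwise distinct (the nontrivial checks being $w\notin\{x,y,z\}$ and $v\notin\{x,y,z\}$, all of which follow from $F_1,F_2$ being triples and from $w\ne z$, $v\ne x$, $w\ne v$); the triples $F,F_1,F_2$ then share only the vertex $y$ across all three and pairwise share the pairs $xy$ and $yz$ through $F$, which is exactly the structure of $\DD$.

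The only real obstacle is bookkeeping: making sure the three triples in each case are distinct and that the vertex sets have the right sizes, so that the produced configuration is a genuine $\CC$ or $\DD$ rather than a degenerate overlap. Since this is just a short finite check on five or fewer vertices, I expect no substantive difficulty, and the observation will follow immediately.
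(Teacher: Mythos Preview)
Your proof is correct and follows essentially the same approach as the paper's: both argue the contrapositive by taking a triple with two non-own pairs, choosing witnessing triples through those pairs, and splitting into the case where the two extra vertices coincide (yielding $\CC$) or differ (yielding $\DD$). The paper's write-up is terser and omits the distinctness bookkeeping you spell out, but the content is identical.
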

Indeed, if  $123\in \FF$ with $\deg_\FF (12), \deg_\FF (13)\ge 2$, then the three triples
$ 123, 12x, 13y $ either form a $\CC$ (in case of $x=y$), or a $\DD$ (in case of $x\neq y$).
On the other hand, both $\CC$ and $\DD$ have a member with at most one own pair.

This observation gives
$\ex(n,\CC\DD)\leq {1\over 2}{n\choose 2}=\frac{1}{4}n^2 \bigl(1-o(1)\bigr)$
and a construction of matching order of magnitude is not hard to find.
In this case, however, we will give the exact value of $\ex(n,\CC\DD)$
together with a characterization of all the extremal constructions. 
As it turns out it will be just one little step further to give a
similar characterization of all extremal $\AAA\CC\DD$-free constructions.  

\subsection{A construction based on a tournament}

First we describe a construction we will call the {\it tournament
construction}. Consider a maximum size matching in the graph $K_n$. Fix
a tournament $T$ on a vertex set consisting of the edges of this
matching. (Thus our tournament is on $\lfloor{n\over 2}\rfloor$
vertices.) Let $\FF_T$ be the $3$-uniform hypergraph the edges of which
are the triples consisting of a matching edge plus a vertex of another matching
edge into which the previous matching edge (as a vertex of $T$) sends an edge
in $T$. In case $n$ is odd, the third vertex of an edge in $\FF_T$ can
also be the unique unmatched vertex of our original $K_n$. The number of triples
in $\FF_T$ is $2{{n\over 2}\choose 2}$ if $n$ is even and
$2{{{n-1}\over 2}\choose 2}+{{n-1}\over 2}$ if $n$ is odd. (Both of the latter
can be written as $\lfloor \frac{1}{4}(n-1)^2 \rfloor.$) It is easy to check
that each triple in $\FF_T$ (for any possible $T$) contains two own
pairs, thus $\CC$ and $\DD$ do not appear in $\FF_T$. This gives
$\ex(n,\CC\DD)\ge \lfloor \frac{1}{4}(n-1)^2 \rfloor.$

\begin{thm} \label{thm:tourn}
$$\ex(n,\CC\DD)=\lfloor \frac{1}{4}(n-1)^2 \rfloor$$
and equality is attained by a triple system $\FF$ if and only if it is
of the form $\FF_T$ for some tournament
$T$.
\end{thm}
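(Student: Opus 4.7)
The plan is to parallel Theorem~\ref{thm:all} by studying the \emph{own-pair graph} $G$ on $V(\FF)$, whose edges are all own pairs of $\FF$. By Observation~\ref{obs:CD}, each triple has either exactly two own pairs or all three, so writing $\FF = \FF_1 \cup \FF_2$ with $\FF_2$ the family of ``all-own'' triples, we get $|E(G)| = 2|\FF| + |\FF_2|$, since each own pair lies in a unique triple. Substituting $|E(G)| = \binom{n}{2} - |E(\overline{G})|$, the desired bound $|\FF| \le \lfloor (n-1)^2/4 \rfloor$ is equivalent to
\[ |E(\overline{G})| + |\FF_2| \ge \lfloor n/2 \rfloor. \]

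To establish this, for each $x \in V(\FF)$ set $o(x) = \deg_G(x)$. If $o(x) = n - 1$, every pair at $x$ is own and the map $y \mapsto f_x(y)$ (where $\{x, y, f_x(y)\}$ is the unique triple through $\{x, y\}$) is a fixed-point-free involution on $V(\FF) \setminus \{x\}$, forcing $n$ odd. Let $S = \{x : o(x) = n - 1\}$. The key observation is that for any two distinct $x, y \in S$, the third vertex $w$ of the unique triple $\{x, y, w\}$ has both $\{x, w\}$ and $\{y, w\}$ own (since $x, y \in S$), so this triple lies in $\FF_2$. Letting $A_k$ denote the number of triples in $\FF_2$ with exactly $k$ vertices in $S$, a double count yields $A_2 + 3A_3 = \binom{|S|}{2}$, and hence $|\FF_2| \ge \lceil \binom{|S|}{2}/3 \rceil$. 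Coupled with $|E(\overline{G})| \ge \lceil (n - |S|)/2 \rceil$ (from $o(x) \le n - 2$ for $x \notin S$), a brief case check on $|S|$ proves the displayed inequality, with equality forcing $|S| = 0$ when $n$ is even and $|S| \in \{1, 3\}$ when $n$ is odd (and in the $|S| = 3$ case $\FF_2$ consists of the single triple $S$).

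For the characterization, equality forces $\overline{G}$ to be a perfect matching on $V(\FF) \setminus S$. For any two matching edges $e, f \in \overline{G}$, each of the four own pairs between $e$ and $f$ lies in a unique triple of $\FF_1$ whose unique pair of $\deg_\FF \ge 2$ must be one of $e, f$; a pair-by-pair trace then forces either all four triples to lie in the crown of $e$ (write $e \to f$) or all four in the crown of $f$, defining a tournament $T$ on $\overline{G}$. For $|S| = 1$, the pairing at the single vertex $v \in S$ adds the triples $e \cup \{v\}$ for each matching edge $e$, producing $\FF = \FF_T$. For $|S| = 3$, pick any $v \in S$ as the ``unmatched'' vertex and treat $\{a, b\} := S \setminus \{v\}$ as an extra matching edge which, upon a similar analysis, must be a sink in the enlarged tournament on $\overline{G} \cup \{\{a, b\}\}$; the unique all-own triple $S$ then reads as $\{a, b\} \cup \{v\} \in \FF_T$. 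The main technical difficulty is precisely this $|S| = 3$ case, in which one must discover a ``ghost'' matching edge $\{a, b\}$ inside $S$ that is not present in $\overline{G}$; the freedom to designate any of the three vertices of $S$ as $v$ reflects the fact that several tournaments $T$ may realize the same extremal $\FF$.
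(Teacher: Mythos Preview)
Your argument is correct and complete, but it takes a different route from the paper's proof. The paper does not put \emph{all} own pairs into $G$; instead it selects exactly two own pairs from each triple (arbitrarily, when a triple has three), so that $|E(G)| = 2|\FF|$ on the nose and the $|\FF_2|$ term never appears. Assuming $|\FF| \ge \lfloor (n-1)^2/4 \rfloor$, this gives $|E(\overline{G})| \le \lfloor n/2 \rfloor$ immediately, and a short case split (is there a vertex all of whose pairs are in $G$?) shows $\overline{G}$ is a matching of size exactly $\lfloor n/2 \rfloor$; the tournament structure then drops out with no separate $|S|=3$ analysis.

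The effect of the paper's trick is that your three equality cases $|S| \in \{0,1,3\}$ collapse into one: when a triple has three own pairs, discarding one of them sends that pair to $\overline{G}$, and the ``ghost'' matching edge $\{a,b\} \subset S$ that you have to discover by hand is simply one of the discarded own pairs. So the two proofs reach the same structural conclusion, but the paper's choice of $G$ front-loads the simplification, whereas your approach carries the full own-pair graph and compensates with the $A_k$ double count and the separate treatment of sinks. Your version has the minor virtue that $G$ is canonically defined (no arbitrary selection), and it makes visible the fact that the $|S|=3$ case corresponds exactly to tournaments $T$ possessing a sink; but the paper's argument is shorter.
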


Before the proof we obtain the following immediate consequence.

\begin{cor} \label{cor:ACD}
$$\ex(n,\AAA\CC\DD)=\lfloor \frac{1}{4}(n-1)^2 \rfloor$$
and equality is attained by a triple system $\FF$ if and only if it is
of the form $\FF_T$ for the transitive tournament $T$.
\end{cor}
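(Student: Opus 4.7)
My plan is to derive the corollary directly from Theorem~\ref{thm:tourn} by identifying which tournaments $T$ make $\FF_T$ also $\AAA$-free.

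First, because $\AAA\CC\DD$-freeness is stronger than $\CC\DD$-freeness, Theorem~\ref{thm:tourn} gives the upper bound $\ex(n,\AAA\CC\DD) \le \lfloor \frac{1}{4}(n-1)^2 \rfloor$ for free. Moreover, any extremal $\AAA\CC\DD$-free $\FF$ is in particular an extremal $\CC\DD$-free triple system, hence must already be of the form $\FF_T$ for some tournament $T$. So the task reduces to: (i) determining which $T$'s yield an $\AAA$-free $\FF_T$, and (ii) showing that the transitive $T$ works (which also certifies the lower bound).

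To analyze (i), I would assign to each triple $F \in \FF_T$ its matching edge $e(F)$ and its ``extra vertex'' $v(F) = F \setminus e(F)$ (either lying in another matching edge, or equal to the unmatched vertex $u$ when $n$ is odd). Given three triples $T_1, T_2, T_3 \in \FF_T$ forming an $\AAA$ on $6$ vertices with pairwise intersections of size $1$, I would first argue that the matching edges $e(T_1), e(T_2), e(T_3)$ are pairwise distinct---otherwise two of the triples would share at least the two endpoints of a common matching edge. Next I would observe that the $6$ vertices of $\AAA$ must then be precisely the endpoints of these three matching edges; this excludes $v(T_i) = u$ and shows that each $v(T_i)$ lies in some $e(T_{\sigma(i)})$ with $\sigma(i) \ne i$.

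The heart of the argument is then a short case analysis of the map $\sigma\colon \{1,2,3\} \to \{1,2,3\}$ with no fixed point. If $\sigma$ is not a $3$-cycle, its image has size $2$, which forces two extra vertices to lie in the same matching edge; tracing which pairs of triples share which vertices quickly yields some $|T_i \cap T_j| \ge 2$, contradicting $\AAA$. The only remaining possibility is $\sigma$ a $3$-cycle, say $1 \mapsto 2 \mapsto 3 \mapsto 1$, which by definition of $\FF_T$ translates exactly to a directed $3$-cycle $e(T_1) \to e(T_2) \to e(T_3) \to e(T_1)$ in $T$. Conversely, any directed $3$-cycle in $T$ produces such an $\AAA$ in $\FF_T$. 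Since a tournament is transitive iff it contains no directed $3$-cycle, $\FF_T$ is $\AAA$-free iff $T$ is transitive, which finishes (i); the transitive tournament on $\lfloor n/2 \rfloor$ vertices then settles (ii). The only nontrivial step is the case analysis for $\sigma$, but it becomes routine once the bookkeeping is set up this way.
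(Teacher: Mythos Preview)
Your proposal is correct and follows essentially the same approach as the paper: deduce the upper bound and the form $\FF_T$ from Theorem~\ref{thm:tourn}, then argue that $\FF_T$ is $\AAA$-free iff $T$ is transitive. The paper states this last equivalence in one line (a directed $3$-cycle in $T$ gives an $\AAA$, and conversely transitivity prevents $\AAA$), whereas you spell out the converse direction via the bookkeeping with $e(T_i)$, $v(T_i)$, and the map $\sigma$; this is a legitimate and clean way to make that step rigorous.
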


\noindent{\bf Proof of Corollary~\ref{cor:ACD}.}\quad
If $T$ is not transitive then it contains a cyclically oriented triangle which
gives rise to a configuration $\AAA$ in $\FF_T$.
On the other hand, if $T$
is transitive then $\AAA$ cannot occur in $\FF_T$.
This observation and the trivial inequality
$\ex(n,\AAA\CC\DD)\leq \ex(n,\CC\DD)$ imply the statement by
 Theorem~\ref{thm:tourn}.
\qed

\subsection{Proof of Theorem~\ref{thm:tourn}}

Let $\FF$ be a triple system not containing $\CC$ and $\DD$ and having the maximum
number of triples. Then $|\FF| \ge |\FF_T| = \lfloor \frac{1}{4}(n-1)^2 \rfloor$. We have already observed that each $F \in \FF$ has at least two own pairs. We form a graph $G$ on $V(\FF)$ by taking as edges exactly two own pairs from each $F \in \FF$. Let $\overline{G}$ be the complement of $G$. Note that every $F \in \FF$ has two of its pairs in $G$ and the third in $\overline{G}$.

We claim that the edges of $\overline{G}$ form a matching of size $\lfloor \frac{n}{2} \rfloor$. First, observe that $|E(G)| = 2|\FF| \ge 2\lfloor \frac{1}{4}(n-1)^2 \rfloor = \lfloor \frac{1}{2}(n-1)^2 \rfloor$. Hence $|E(\overline{G})| \le {n \choose 2} - \lfloor \frac{1}{2}(n-1)^2 \rfloor = \lfloor \frac{n}{2} \rfloor$. Now we distinguish two cases. If every vertex $x \in V(\FF)$ is incident to an edge of $\overline{G}$, then $\overline{G}$ must be a perfect matching (and $n$ is even). In the other case, consider a vertex $x$ all of whose pairs are edges of $G$. These $n-1$ pairs are own pairs of triples containing $x$, so $n$ is odd and these triples are of the form $xy_1z_1,\ldots,xy_{(n-1)/2}z_{(n-1)/2}$ where $y_1z_1,\ldots,y_{(n-1)/2}z_{(n-1)/2}$ form a perfect matching in $V(\FF) \setminus \{x\}$ and each of them is an edge of $\overline{G}$ (as the third pair of its triple). In this case, too, we get that $\overline{G}$ is a matching of size $\lfloor \frac{n}{2} \rfloor$. In particular, the inequalities above must hold as equalities, proving the first part of the theorem.

To establish the tournament structure, we take the edges of $\overline{G}$ as the maximum size matching (the vertices of the tournament). We already know that each triple in $\FF$ contains an edge of $\overline{G}$. Given any two distinct edges of $\overline{G}$, the four crossing pairs between them must be edges of $G$, hence own pairs of triples. This prevents the existence of triples going both ways between two edges of $\overline{G}$, and implies the tournament structure for $\FF$. \qed

\section{Excluding $\AAA$ and $\DD$}
For the combination $\AAA\DD$, by our previous analysis for $\AAA\CC\DD$, we can write \[\ex(n,\AAA\DD) \ge \ex(n,\AAA\CC\DD) = \lfloor \frac{1}{4}(n-1)^2 \rfloor.\]
We recall that $\ex(n,\AAA\CC\DD)$ is attained uniquely by the construction $\FF_T$ based on a transitive tournament $T$.

It turns out that the answer for $\AAA\DD$ is similar, but a bit more subtle. For any even $n \ge 4$ we can add two triples to $\FF_T$ as indicated next, and still avoid $\AAA$ and $\DD$. Say the pairs $12$ and $34$ are the elements of the tournament $T$ with the lowest and second lowest outdegree, respectively. On these four vertices, $\FF_T$ contains the two triples $134, 234$, and we can add the other two triples $123, 124$ without forming an $\AAA$ or $\DD$. We denote this augmented construction by $\FF_T^+$. For odd $n$ we cannot do better than $\frac{1}{4}(n-1)^2$, but we can modify $\FF_T$ as indicated next, and get another extremal construction (assuming $n \ge 5$). Say $0, 12, 34$ are the singleton and the two lowest outdegree elements of $T$, respectively. On these five vertices, $\FF_T$ contains the four triples $012, 034, 134, 234$. We can replace $012$ and $034$ by $123$ and $124$ without forming an $\AAA$ or $\DD$. We denote this modified construction by $\FF_T^{\pm}$.

\begin{thm} \label{thm:AD}
\begin{equation*}
\ex(n,\AAA\DD)=
    \begin{cases}
        \lfloor \frac{1}{4}(n-1)^2\rfloor + 2 & \text{if } n \ge 4 \text{ is even}\\
        \frac{1}{4}(n-1)^2 & \text{if } n \text{ is odd}
    \end{cases}
\end{equation*}
and equality is attained by a triple system $\FF$ if and only if it is of the form $\FF_T^+$ (for even $n \ge 4$) or one of the two forms $\FF_T$ or $\FF_T^{\pm}$ (for odd $n \ge 5$) for a transitive tournament $T$.
\end{thm}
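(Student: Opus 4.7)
For the lower bound, I would verify directly that the proposed constructions are $\AAA\DD$-free. In $\FF_T^+$ the two added triples $\{1,2,3\}$ and $\{1,2,4\}$ complete an isolated $4$-cluster on $\{1,2,3,4\}$---this being possible because the choice of $\{1,2\}$ and $\{3,4\}$ as the two lowest-outdegree matching edges means $\FF_T$ already contained only $\{1,3,4\}$ and $\{2,3,4\}$ on these vertices---and three cluster triples cannot be the three triples of an $\AAA$, since any two of them share a pair rather than a single vertex. The analogous check for $\FF_T^{\pm}$ on odd $n$ is similar.

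For the upper bound, let $\FF$ be $\AAA\DD$-free. By Observation~\ref{obs:CC}, each connected component of $\FF$ (with respect to contiguity) is a cluster or a $k$-crown. Let $c_4$ be the number of $4$-cluster components, $r$ the number of $k$-crown components, and $S$ the number of pairs outside $\partial\FF$. A shadow-edge count (using that a $4$-cluster contributes $6$ edges and $4$ triples, a $3$-cluster $6$ edges and $3$ triples, and a $k$-crown $2k+1$ edges and $k$ triples) yields the identity
\[ |\FF| \;=\; c_4 + \binom{n}{2}/2 - (r+S)/2, \]
in which $3$-clusters cancel (they are suboptimal). When $\FF$ has no cluster at all, it is $\CC\DD$-free and Theorem~\ref{thm:tourn} bounds $|\FF|$ by $\lfloor(n-1)^2/4\rfloor$, with equality only for $\FF_T$. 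Otherwise, for each cluster on $\{a,b,c,d\}$ I would delete the two triples $\{a,c,d\}$ and $\{b,c,d\}$; the component collapses to the $2$-crown $\{\{a,b,c\},\{a,b,d\}\}$ centered at $\{a,b\}$ and the pair $\{c,d\}$ is removed from $\partial\FF$. The resulting $\FF'$ is $\AAA\CC\DD$-free with $|\FF'| = |\FF| - 2(c_3+c_4)$, and Corollary~\ref{cor:ACD} gives $|\FF|\le \lfloor(n-1)^2/4\rfloor + 2(c_3+c_4)$; with a single cluster this already matches the even-$n$ target $\lfloor(n-1)^2/4\rfloor+2$.

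Two tasks remain. First, rule out two or more clusters: the $\AAA$-free condition applied to triples from two clusters forces, for any two vertices $v, v'$ of a cluster, the link-edge sets $E_v, E_{v'}$ in the complement of the clusters to satisfy the constraint that any edge from one is either equal to or vertex-disjoint from any edge from the other; quantifying this ``wasted shadow'' yields $r+S$ large enough to push $|\FF|$ strictly below $\lfloor(n-1)^2/4\rfloor+2$. Second, tighten by $2$ in the odd case: after the reduction, $\FF'$ contains an isolated $2$-crown on $\{a,b\}$ and has $\{c,d\}$ as a non-shadow pair, but the unique odd-$n$ extremum in Corollary~\ref{cor:ACD} is $\FF_T$ with $S=0$, so these two structural constraints force $|\FF'|\le (n-1)^2/4 - 2$ by a case analysis of how a transitive-tournament construction can simultaneously accommodate them. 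The extremal characterization then follows by reversing the reduction: $c_4 = 0$ gives $\FF_T$, while $c_4 = 1$ recovers $\FF_T^+$ in the even case and $\FF_T^{\pm}$ in the odd case. I expect the main obstacle to be the odd-$n$ deficit-$2$ estimate, which goes beyond the uniqueness of $\FF_T$ in Corollary~\ref{cor:ACD} and requires a stability-type second-best analysis of $\AAA\CC\DD$-free systems under the induced restrictions.
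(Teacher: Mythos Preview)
Your route is genuinely different from the paper's: you use the global component decomposition from Observation~\ref{obs:CC} together with the cluster-to-crown reduction, whereas the paper argues by induction on $n$, removing a single $\CC$ configuration and analysing the links of its four vertices in the remaining $n-4$ vertices. Your identity $|\FF|=c_4+\tbinom{n}{2}/2-(r+S)/2$ and the reduction to an $\AAA\CC\DD$-free $\FF'$ with $|\FF'|=|\FF|-2(c_3+c_4)$ are both correct, and the structural fact you isolate---that for $v,v'$ in the same cluster, any edge in the link $E_v$ and any edge in $E_{v'}$ are either equal or vertex-disjoint---is exactly the paper's key local observation.

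The two tasks you flag, however, are genuine gaps rather than details. For ``at most one cluster'', knowing that the $E_v$ are pairwise equal-or-disjoint does not by itself give a usable lower bound on $r+S$: you still have to understand which triples in $V\setminus Q$ can exist, and that requires the further claims (every such triple contains one of the matching pairs $P_j$, and no two-way traffic between $P_j$ and $P_k$) that the paper proves using the cluster triples explicitly. For the odd-$n$ deficit, you need a second-best bound for $\AAA\CC\DD$-free families under the added constraints ``$\{c,d\}\notin\partial\FF'$ and the $2$-crown on $\{a,b\}$ is isolated''; uniqueness in Corollary~\ref{cor:ACD} only gives a deficit of $1$, not $2$, and the full stability statement you would need is not available in the paper.

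The inductive proof in the paper sidesteps both issues at once: after fixing one $\CC$ on $\{1,2,3,4\}$ and showing that the multigraph $G$ on $V'$ decomposes into the pairs $P_1,\dots,P_m$, the paper bounds $|E(G)|$ against $f(n)-f(n-4)$ and then shows directly that the triples inside $V'$ must respect the tournament structure. A second cluster in $V'$ is then killed by the ``no two-way traffic'' claim, and the odd/even distinction falls out of the closed-form count $4+4m+2\binom{m}{2}(+m)$ with no separate stability argument. If you push your approach through, you will find yourself reproducing this local analysis around one cluster in order to control $r+S$; the induction is what packages it cleanly.
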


\begin{proof} 
We argue by induction on $n$. The base cases $n=3,4$ are trivial, so we assume $n \ge 5$. Let $\FF$ be an $\AAA\DD$-free triple system on the vertex set $V$, with $|V|=n$ and
\begin{equation*}
|\FF| \ge f(n) \coloneqq
    \begin{cases}
        \lfloor \frac{1}{4}(n-1)^2 \rfloor + 2 & \text{if } n \text{ is even}\\
        \frac{1}{4}(n-1)^2 & \text{if } n \text{ is odd}
    \end{cases}
\end{equation*}
We have to show that $|\FF| = f(n)$ and $\FF$ is of one of the specified forms.

Assume first that $\FF$ is $\CC$-free as well. Then by Corollary~\ref{cor:ACD} we have $|\FF| \le \lfloor \frac{1}{4}(n-1)^2 \rfloor$, so this is possible only when $n$ is odd, in which case we get $|\FF| = f(n)$ and $\FF$ is of the form $\FF_T$ (again by Corollary~\ref{cor:ACD}).

From now on, we assume that $\FF$ does contain a $\CC$ configuration, say on vertices $1,2,3,4$. Note that any triple in $\FF$ that is not contained in $\{1,2,3,4\}$ intersects it in at most one vertex -- otherwise we get a $\DD$ configuration. When $n=5$ this immediately implies that $\FF$ consists only of the $4$ triples in $\{1,2,3,4\}$, which form an $\FF_T^{\pm}$. When $n=6$ this implies that $\FF$ consists of those $4$ triples and another $4$ triples of the form $i56$, $i=1,2,3,4$, yielding an $\FF_T^+$. Henceforth we assume $n \ge 7$, which will allow us to apply the induction hypothesis to the family of those triples in $\FF$ contained in $V' \coloneqq V \setminus \{1,2,3,4\}$.

For $i=1,2,3,4$, let $G_i$ be the graph whose edges are those pairs in $V'$ that form with the vertex $i$ a triple in $\FF$. By the absence of $\DD$, the graph $G_i$ has no path of three edges, implying that each connected component of $G_i$ is either a star (including the cases of an isolated vertex or edge) or a triangle. By the absence of $\AAA$, an edge of $G_i$ and an edge of $G_j$, $j \neq i$, cannot share exactly one vertex. Let $G$ be the multigraph on $V'$ formed by the $G_i$, $i=1,2,3,4$. By the two observations just made, each connected component of $G$ is either a pair of vertices with $2,3$ or $4$ parallel edges between them (say there are $m$ such pairs $P_1,\ldots,P_m$), or a star or triangle whose edges belong to a single $G_i$.

As the number of edges is at most $4$ in each $P_j$, and at most the number of vertices in every other component of $G$, we can write \[|E(G)| \le 4m + \bigl( n-4-2m \bigr) = n+2m-4.\]
On the other hand, using the induction hypothesis and the equality $f(n) - f(n-4) = 2n - 6$, we get \[f(n) \le |\FF| \le 4 + |E(G)| + f(n-4) = |E(G)| + f(n) - 2n +10,\] implying that $|E(G)| \ge 2n - 10$. Comparing these two bounds on $|E(G)|$, we immediately get $2m \ge n-6$, so $\cup_{j=1}^m P_j$ covers all but at most two vertices of $V'$. Actually, it cannot leave out two vertices, because then our upper bound on $|E(G)|$ is not tight (those two vertices carry at most one edge). It follows that $P_1,\ldots,P_m$ is a maximum matching in $V'$, i.e., $m = \lfloor \frac{n-4}{2} \rfloor$. Our bounds on $|E(G)|$ become \[2n - 10 \le |E(G)| \le 4 \lfloor \frac{n-4}{2} \rfloor.\] For odd $n$ the bounds coincide, implying that each $P_j$ has $4$ parallel edges. For even $n$ there is a slack of $2$, so there may be one exceptional $P_j$ with $2$ parallel edges, or at most two exceptional $P_j$ with $3$ parallel edges each. Note that in any case, for any two distinct $P_j,P_k$, there is a graph $G_i$ having an edge in both $P_j$ and $P_k$.

We claim that any triple $F \in \FF$, $F \subseteq V'$ must contain one of the pairs $P_j$, $j=1,\ldots,m$. If not, then $F$ is of the form $xyz$ where $x \in P_j$, $y \in P_k$, $k \neq j$, and $z$ is either in a third $P_{\ell}$ or (when $n$ is odd) is the unpaired vertex of $V'$. Let $x'$ and $y'$ be the other vertices of $P_j$ and $P_k$, respectively, and let $i \in \{1,2,3,4\}$ be such that $E(G_i)$ contains both $xx'$ and $yy'$. Then $xyz, ixx',iyy'$ give an $\AAA$ configuration, which is forbidden.

We claim further that for any two distinct pairs $P_j$ and $P_k$, there cannot coexist in $\FF$ a triple made of $P_j$ and a vertex of $P_k$ and a triple made of $P_k$ and a vertex of $P_j$. Indeed, two such triples and a triple of the form $P_j \cup \{i\}$, $i \in \{1,2,3,4\}$, would yield a $\DD$ configuration.

To summarize, the possible triples $F \in \FF$, $F \subseteq V'$ are either contained in some $P_j \cup P_k$, $j \neq k$ (and there are at most two such triples for each $j \neq k$), or consist of some $P_j$ and the unpaired vertex of $V'$ (when $n$ is odd). This gives the following upper bound on $|\FF|$:

\begin{equation*}
|\FF| \le
    \begin{cases}
        4 + 4\frac{n-4}{2} + 2{\frac{n-4}{2} \choose 2} & \text{if } n \text{ is even}\\
        4 + 4\frac{n-5}{2} + 2{\frac{n-5}{2} \choose 2} + \frac{n-5}{2} & \text{if } n \text{ is odd}
    \end{cases}
\end{equation*}
A calculation shows that for either parity, this bound equals $f(n)$. We conclude that $|\FF| = f(n)$ and all the triples counted in the upper bound are in fact in $\FF$. This $\FF$ is easily seen to be of the form $\FF_T^+$ in the even case and $\FF_T^{\pm}$ in the odd case, for a tournament $T$ in which $P_j$ beats $P_k$ if the two triples in $P_j \cup P_k$ contain $P_j$, and every $P_j$ beats $12$ and $34$. The tournament must be transitive in order to avoid an $\AAA$ configuration.
\end{proof}  

\section{Excluding $\AAA, \BB$ and $\CC$}
\subsection{A construction based on the Tur\'an graph}
Let ${\FF^*_n}$ be the following triple system on a set $V$ of $n$ vertices. Fix a vertex $x\in V$, and consider a complete bipartite graph on $V \setminus \{x\}$ with parts of size $\lfloor{{n-1}\over 2}\rfloor$ and $\lceil{{n-1}\over 2}\rceil$, respectively.
Let the triple $F$ be in $\FF^*_n$ if and only if $F$ consists of $x$ and an edge of the complete bipartite graph. It is easy to check that none of $\AAA$, $\BB$ and $\CC$ is contained in $\FF^*_n$, and its size is $\lfloor\frac{1}{4}(n-1)^2\rfloor$.

\begin{thm} \label{thm:ABC}
\begin{equation*}
\ex(n,\AAA\BB\CC)=\lfloor \frac{1}{4}(n-1)^2\rfloor
\end{equation*}
and equality is attained by a triple system $\FF$ if and only if it is isomorphic to $\FF^*_n$.
\end{thm}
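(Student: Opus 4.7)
The plan has four steps. First, I would show that in any $\AAA\BB\CC$-free $\FF$, every triangle of the shadow $G:=\partial\FF$ is a triple of $\FF$. Indeed, if $\{a,b,c\}$ is a triangle in $G$ with $\{a,b,c\}\notin\FF$, then any triples $T_{ab},T_{bc},T_{ac}\in\FF$ containing the three pairs are pairwise distinct (otherwise two of them coincide and contain $\{a,b,c\}$) and form a Berge $3$-cycle that cannot be of type $\DD$; it must then be one of $\AAA,\BB,\CC$, a contradiction. Consequently $G$ is $K_4$-free (else $K_4^{(3)}\subseteq\FF$ contains a $\CC$), each link $\FF[v]=G[N_G(v)]$ is triangle-free, $|\FF|$ equals the number of triangles in $G$, and for every pair $uv\in\partial\FF$ the set $L(uv)=N_G(u)\cap N_G(v)$ is independent in $G$ (otherwise a $K_4$).

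Second, I would prove a Helly-type property: every pairwise intersecting sub-family of $\FF$ has a common vertex. For three triples, either two of the pairwise-shared vertices coincide (giving a common vertex) or the three triples form a Berge $3$-cycle, which must be of type $\DD$ (as $\AAA,\BB,\CC$ are forbidden) and so has a common vertex. For four triples, if no common vertex existed, pushing the three-triple case through forces all pairwise intersections to have size two, which makes the four triples form a $K_4^{(3)}$ on four vertices and hence contain a $\CC$---contradiction. A short induction extends the property to all larger pairwise intersecting sub-families.

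Third, and most technical, I would show that in the extremal regime $|\FF|\ge\lfloor(n-1)^2/4\rfloor$ the family $\FF$ itself is pairwise intersecting, so by the Helly-type property it shares a common vertex $x$. Suppose for contradiction that $T_1,T_2\in\FF$ are disjoint. Partitioning the remaining triples by their intersection pattern with $T_1\cup T_2$, I would apply $\AAA$-freeness to configurations such as $\{T_1,T,T'\}$, where $T$ meets $T_1$ only at some vertex $p$ and $T'$ meets $T_1$ only at $q\ne p$, to force $|T\cap T'|\in\{0,2\}$; the case of size two would create a thick pair whose link contains both $p$ and $q$, contradicting that $pq\in G$ (via $T_1$) while that link is independent in $G$. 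Such restrictions across all classes, combined with $K_4$-freeness of $G$ and Tur\'an bounds on the triangle-free links, give $|\FF|<\lfloor(n-1)^2/4\rfloor$. Once the common vertex $x$ is in hand, $\FF[x]$ is a triangle-free graph on $n-1$ vertices; Tur\'an--Mantel then gives $|\FF[x]|\le\lfloor(n-1)^2/4\rfloor$ with equality iff $\FF[x]=K_{\lfloor(n-1)/2\rfloor,\lceil(n-1)/2\rceil}$, that is, $\FF=\FF^*_n$.

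The main obstacle is the third step: converting the local $\AAA$-free condition into a global bound ruling out disjoint triples in extremal $\FF$. The case analysis of how remaining triples can interact with $T_1\cup T_2$ is delicate and relies on combining $\AAA$-freeness with the independence of thick-pair links and with the $K_4$-freeness of $G$ in several subcases.
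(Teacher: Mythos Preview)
Your first two steps are correct and yield genuinely useful structure: every triangle of the shadow $G=\partial\FF$ lies in $\FF$ (so $G$ is $K_4$-free and $|\FF|$ equals the number of triangles of $G$), and every pairwise intersecting subfamily of $\FF$ has a common vertex. Your check that a Berge triangle on a core $\{a,b,c\}\notin\FF$ can never be of type $\DD$ is right (the common vertex of a $\DD$ would have to lie outside $\{a,b,c\}$, producing a $\CC$ instead), and the Helly extension from three to four triples works because the unique obstruction is $K_4^{(3)}$, which contains a $\CC$.

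The genuine gap is step~3. The local constraint you derive is correct (two triples meeting $T_1$ at distinct single vertices and avoiding $T_2$ must be disjoint, by $\AAA$-freeness plus the independence of $L(rr')$), but this alone does not give $|\FF|<\lfloor(n-1)^2/4\rfloor$. You still have to control triples meeting $T_1$ in two vertices, triples meeting $T_2$, triples meeting both, triples meeting neither, and all cross-interactions, and then turn this web of pairwise constraints into a global count. Nothing in the sketch indicates what the counting inequality is or why it closes below $\lfloor(n-1)^2/4\rfloor$; this is essentially the full content of the theorem, and you yourself flag it as ``the main obstacle'' without resolving it.

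The paper avoids this case analysis by a different idea. From $\AAA\BB\CC$-freeness every triple has an own pair; selecting one own pair per triple gives a graph $H$ with $|E(H)|=|\FF|$. The point is that $H$ is triangle-free \emph{and} the two non-selected pairs of each triple are at distance $>2$ in $H$. A short structural lemma then says that a triangle-free graph with $|E(H)|\ge\lfloor(n-1)^2/4\rfloor$ has very few such ``remote'' pairs (at most two in the non-bipartite case, none within a part in the bipartite case). Since every triple contributes two remote pairs, this immediately gives the bound and, in the equality case, forces all triples through a single vertex. This bypasses the need to first prove that an extremal $\FF$ is intersecting.
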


\subsection{The structure of nearly extremal triangle-free graphs}
We will need the following lemma about triangle-free graphs on $n$ vertices having at least $\lfloor \frac{1}{4}(n-1)^2 \rfloor$ edges. Given a graph $G=(V,E)$, two vertices $x,y \in V$ are called \emph{remote} if the distance between them in $G$ is greater than $2$.

\begin{lem} \label{lem:ABC}
Let $G=(V,E)$ be a triangle-free graph with $|V|=n$ and $|E| \ge \lfloor \frac{1}{4}(n-1)^2 \rfloor$. Then either
\begin{enumerate}
\item $G$ is bipartite, and if $|E| > \lfloor \frac{1}{4}(n-1)^2 \rfloor$ then no two vertices of the same part are remote, or
\item $G$ is non-bipartite, and there are at most two pairs of remote vertices (none if $|E| > \lfloor \frac{1}{4}(n-1)^2 \rfloor$).
\end{enumerate}
\end{lem}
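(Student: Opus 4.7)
I would split the proof on whether $G$ is bipartite: the bipartite case is a short degree count, and the non-bipartite case uses structural analysis around a shortest odd cycle.

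Bipartite case. Let $V = A \cup B$ be the bipartition. If $x, y \in A$ are remote then $N(x), N(y) \subseteq B$ are disjoint, so $\deg(x)+\deg(y) \le |B|$, and summing degrees over $A$ yields
\[ |E| \le (|A|-2)|B| + |B| = (|A|-1)|B| \le \left(\frac{|A|-1+|B|}{2}\right)^2 = \frac{(n-1)^2}{4} \]
by AM-GM; integrality gives $|E| \le \lfloor(n-1)^2/4\rfloor$. Hence if $|E| > \lfloor(n-1)^2/4\rfloor$, no remote pair can occur inside either part.

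Non-bipartite case. Let $C$ be a shortest odd cycle, of length $\ell = 2k+1 \ge 5$; it is induced. A standard shortest-odd-cycle argument shows that every $w \notin V(C)$ has at most two neighbors on $C$, and if exactly two they lie at $C$-distance $2$ (otherwise concatenating $v_i w v_j$ with an arc of $C$ would yield a shorter odd cycle). Combined with Tur\'an's bound on $G[V \setminus V(C)]$, this gives
\[ |E(G)| \le \ell + 2(n-\ell) + \lfloor (n-\ell)^2/4 \rfloor, \]
which evaluates to $\lfloor(n-1)^2/4\rfloor + 1$ for $\ell = 5$ and is strictly less than $\lfloor(n-1)^2/4\rfloor$ for $\ell \ge 7$ once $n \ge 6$. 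Hence the odd girth of $G$ is $5$. Fix an induced $C_5 = v_1 v_2 v_3 v_4 v_5$, and for each diagonal pair $P$ of $C_5$ let $X_P$ be the set of vertices of $V \setminus V(C_5)$ whose $C_5$-neighbors are exactly $P$. Each $X_P$ is independent (otherwise a triangle through a shared $C_5$-vertex would form), and edges between $X_P$ and $X_{P'}$ require $P \cap P' = \emptyset$; the disjointness graph of the five diagonal pairs is itself a $C_5$, so $G[V \setminus V(C_5)]$ is a subgraph of a $C_5$-blow-up and its Tur\'an maximum $\lfloor(n-5)^2/4\rfloor$ is reached only by concentrating on two adjacent classes. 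In this saturated case $G$ is exactly a subdivided balanced complete bipartite graph with $a+b = n-1$ (the non-diagonal vertex of $C_5$ playing the role of subdivision vertex), and one checks directly that such a graph has no remote pair.

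The lemma then follows by comparing $|E|$ to the maximum $\lfloor(n-1)^2/4\rfloor + 1$. If $|E| = \lfloor(n-1)^2/4\rfloor + 1$ the extremal structure holds exactly, so there are zero remote pairs. If $|E| = \lfloor(n-1)^2/4\rfloor$ the graph differs from the extremal structure by a single one-edge defect: either a missing edge in $G[V \setminus V(C_5)]$ (yielding one remote pair between its endpoints), or a vertex $w$ with only one instead of two $C_5$-attachments (yielding up to two remote pairs, one for each of the two $C_5$-vertices in the missed diagonal with which $w$ then fails to share any neighbor). In either case at most two remote pairs arise, and $|E| > \lfloor(n-1)^2/4\rfloor$ allows no defect at all. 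The main obstacle will be the stability step: one must enumerate the possible unit-edge deviations from the extremal subdivided structure and verify that none produces more than two remote pairs. The bookkeeping is routine once the diagonal-pair description of $V \setminus V(C_5)$ is in place, but demands a careful case split.
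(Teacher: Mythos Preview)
Your approach is the same as the paper's: split on bipartiteness, use a shortest odd cycle plus edge counting to force $\ell=5$, then a stability analysis of the near-extremal non-bipartite graphs. Two points in the non-bipartite case need attention.

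First, your description of the saturated structure as ``exactly a subdivided balanced complete bipartite graph with $a+b=n-1$'' is too narrow. The $X_P$ framework is a good organizing device, but the actual constraint is only that the set of diagonals used by the two Tur\'an parts be completely joined in the disjointness $C_5$; this also allows one part to use a single diagonal $P$ while the other splits across the two diagonals disjoint from $P$. All such structures do have diameter $2$, so the conclusion survives, but the uniqueness claim (``reached only by concentrating on two adjacent classes'') does not.

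Second, and more seriously, your enumeration of one-edge defects is incomplete. When the deficit of one sits in $|E(G[V\setminus V(C_5)])|$, you write ``a missing edge \dots\ yielding one remote pair between its endpoints,'' implicitly treating the induced graph as the Tur\'an graph minus an edge. But a triangle-free graph on $n-5$ vertices with $\lfloor(n-5)^2/4\rfloor-1$ edges can also be the unbalanced complete bipartite graph $K_{(n-5)/2-1,\,(n-5)/2+1}$ (when $n-5$ is even), or, when $n-5=5$, another $C_5$. The paper explicitly lists these subcases and checks each one for remote pairs; your ``careful case split'' must include them as well, and in those cases there is no single ``missing edge'' whose endpoints are the remote pair.
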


\begin{proof} 
Our proof extends a classical one by  Andrásfai, Erd\H{o}s and Gallai (see~\cite{ErdRadem}), who showed that a triangle-free graph with at least $\lfloor \frac{1}{4}(n-1)^2 \rfloor + 2$ edges must be bipartite.

Suppose first that $G$ is bipartite with parts $A$ and $B$ of cardinality $a$ and $b$, respectively, where $a+b=n$, $a \le b$. If some two vertices of the same part are remote, then at least $a$ pairs in $A \times B$ are non-edges in $G$. This implies that $|E| \le a(b-1) \le \lfloor \frac{1}{4}(n-1)^2 \rfloor$, proving the assertion in the first part of the lemma.

Next, suppose that $G$ is non-bipartite, and let $C_{\ell}$ be a shortest odd cycle in $G$, of length $\ell \ge 5$. To avoid a shorter odd cycle, $C_{\ell}$ must be induced, and every vertex in $V \setminus V(C_{\ell})$ can have at most two neighbors in $V(C_{\ell})$. Using these facts, and Tur\'an's theorem for the graph induced on $V \setminus V(C_{\ell})$, we get the upper bound \[ |E| \le \ell + 2(n - \ell) + \lfloor \frac{1}{4}(n-\ell)^2 \rfloor. \]
This bound is a decreasing function of $\ell$, and for $\ell = 7$ it is strictly less than $\lfloor \frac{1}{4}(n-1)^2 \rfloor$. Hence we must have $\ell = 5$, for which the bound becomes $|E| \le \lfloor \frac{1}{4}(n-1)^2 \rfloor + 1$.

If $|E| = \lfloor \frac{1}{4}(n-1)^2 \rfloor + 1$ then $G$ has the following structure: a $5$-cycle $C_5$, a complete bipartite graph $K_{\lfloor \frac{n-5}{2} \rfloor, \lceil \frac{n-5}{2} \rceil}$ on $V \setminus V(C_5)$, and every vertex of the latter is joined by edges to two non-adjacent vertices of the former. This graph is easily seen to have diameter $2$, i.e., no pair of remote vertices. If $|E| = \lfloor \frac{1}{4}(n-1)^2 \rfloor$ then one of the two terms $2(n-5)$ and $\lfloor \frac{1}{4}(n-5)^2 \rfloor$ in the upper bound on $|E|$ was off by $1$. If it was the former, then there is a unique vertex $x$ in $V \setminus V(C_5)$ that has a single neighbor $y$ in $V(C_5)$. Denoting the two vertices of $C_5$ at distance $2$ from $y$ by $z$ and $w$, we see that there are at most two pairs of remote vertices, $\{x,z\}$ and $\{x,w\}$. If it was the latter, then the triangle-free graph induced on $V \setminus V(C_5)$ has $\lfloor \frac{1}{4}(n-5)^2 \rfloor - 1$ edges, one less than the Tur\'an bound. This induced graph can only be of one of the following forms: the Tur\'an graph minus an edge, or (when $n-5$ is even) a $K_{\frac{n-5}{2} - 1, \frac{n-5}{2} + 1}$, or (when $n-5=5$) another copy of $C_5$. In each of these cases, one can check that there are at most two pairs of remote vertices.
\end{proof}  

\subsection{Proof of Theorem~\ref{thm:ABC}}
Let $\FF$ be an $\AAA\BB\CC$-free triple system on $n$ vertices. Then each triple in $\FF$ has at least one own pair. Choosing exactly one own pair from each triple, we get a graph $G=(V,E)$ with $|E| = |\FF|$. The absence of $\AAA\BB\CC$ in $\FF$ implies that $G$ is triangle-free, and for each triple in $\FF$, one of its pairs is an edge of $G$ and the other two pairs are remote in $G$.

We assume that $|E| = |\FF| \ge \lfloor \frac{1}{4}(n-1)^2 \rfloor$, and apply Lemma~\ref{lem:ABC} to the graph $G$. As each triple in $\FF$ contributes two pairs of remote vertices, case 2 of the lemma cannot occur. Thus we are in case 1, and $G$ is bipartite with bipartition $V=A \cup B$. Each triple in $\FF$ must contain an edge of $G$, hence two vertices in one part and one in the other. By the above, the two vertices in the same part are remote. If $|E| > \lfloor \frac{1}{4}(n-1)^2 \rfloor$ we get a contradiction to the statement of case 1 of the lemma. This proves that $\ex(n,\AAA\BB\CC) = \lfloor \frac{1}{4}(n-1)^2 \rfloor$.

It remains to show that if $|\FF| = \lfloor \frac{1}{4}(n-1)^2 \rfloor$ then $\FF$ is of the form $\FF^*_n$. For this, it suffices to show that all triples in $\FF$ share some fixed vertex $x$. Indeed, by the absence of $\CC$, the link of such $x$ in $\FF$ has to be a triangle-free graph on $V \setminus \{x\}$ with $\lfloor \frac{1}{4}(n-1)^2 \rfloor$ edges, hence it must be the Tur\'an graph.

Let $\overline{G}$ be the bipartite complement of $G$, i.e., $E(\overline{G})$ is the set of pairs in $A \times B$ which are not edges of $G$. As $|A|+|B|=n$,
we have $|E(\overline{G})| = |A| \cdot |B| - \lfloor \frac{1}{4}(n-1)^2 \rfloor \le \min\{ |A|,|B| \}$. On the other hand, if there is a triple in $\FF$ with two vertices in $A$ then they are remote in $G$, hence every vertex in $B$ is incident to an edge of $\overline{G}$. Similarly, if there is a triple in $\FF$ with two vertices in $B$ then every vertex in $A$ is incident to an edge of $\overline{G}$.

Assume first that there are triples of both kinds. We conclude that $|A| = |B|$ and $\overline{G}$ is a perfect matching between $A$ and $B$. Now, if $|A| = |B| \ge 3$ then no two vertices of the same part are remote and we get a contradiction. The only possibility is $|A| = |B| = 2$, in which case $n=4$, $|\FF| = 2$ and the two triples in $\FF$ must share two vertices.

Thus, we may assume that all triples in $\FF$ are of the same kind, having w.l.o.g.\ two vertices in $A$ and one in $B$. Let $x \in A$ and $y \in B$ form an edge of $\overline{G}$. Since every vertex in $B$ is incident to an edge of $\overline{G}$, and there are at most $|B|$ such edges, $xy$ is the only edge of $\overline{G}$ incident to $y$. Hence, no two vertices in $A \setminus \{x\}$ are remote. Therefore, every triple in $\FF$ must contain $x$, completing the proof.
\qed

\section{Excluding $\AAA$ and $\CC$}\label{sec:AC}
\message{Excluding $Aa$ and $CC$}

\subsection{$\AAA\CC$-free constructions}\label{subsec:AC}

We have already encountered two different equally large constructions of $\AAA\CC$-free triple systems: $\FF_T$, the one based on the transitive tournament (which also avoids $\DD$), and $\FF^*_n$, the one based on the Tur\'an graph (which also avoids $\BB$), both of size $\lfloor \frac{1}{4}(n-1)^2 \rfloor$. It turns out that these are optimal $\AAA\CC$-free constructions for even $n$, but we can do better by $1$ for odd $n \ge 5$.

Given an $\AAA\CC$-free family $\FF$ on the $n$-element vertex set $V$ one can define another $\AAA\CC$-free family $\FF^{+2}$ on the vertex set $V\cup \{ 1,2\}$ as follows. $\FF^{+2}:=\FF\cup \{ 12v: v\in V \}$. We call this new family a {\em $2$-extension} of $\FF$. One can define an $\AAA\CC$-free family $\FF^{+4}$ on the vertex set $V\cup \{ 1,2,3,4\}$ as
$$
  \FF^{+4}:=\FF\cup \{ 12v: v\in V \}\cup \{ 34v: v\in V \}\cup \{ 123, 234 \}.$$
Note that if $V \ne \emptyset$ then $(\FF^{+2})^{+2}$ is not the same as $\FF^{+4}$, although they only differ by replacing $124$ by $234$.
For even $n\geq 2$ let $\FFF_{(n)}$ denote the class of $n$-vertex triple systems one can obtain by an arbitrary sequence of $2$-extensions and $4$-extensions starting with a system of the form $\FF^*_{n-2k}$ for some $0 \le k \le \frac{n-2}{2}$ (the case $k=0$ gives $\FF^*_n$ without extensions, the case $k=\frac{n-2}{2}$ applies a sequence of extensions starting from the empty triple system on $2$ vertices and gives $\FF_T$ when it uses only $2$-extensions). All families $\FF \in \FFF_{(n)}$, $n$ even, have the same size $|\FF| = \lfloor \frac{1}{4}(n-1)^2 \rfloor$.

For $n=5$ we arrange the vertices $1,2,3,4,5$ cyclically, and let $\FF^5_5$ consist of the five triples of consecutive vertices $123, 234, 345, 451, 512$. We call such a triple system a \emph{five-ring}, and note that it contains no $\AAA$ or $\CC$. Its size is $5 = \frac{1}{4}(5-1)^2 + 1$.
For odd $n\geq 5$ the members of the class $\FFF_{(n)}$ are those $n$-vertex triple systems obtained from a five-ring by an arbitrary sequence of $2$- and $4$-extensions.
By induction we have $|\FF| = \frac{1}{4}(n-1)^2 + 1$ for all   odd $n \ge 5$ when $\FF\in \FFF_{(n)}$.

\begin{thm} \label{thm:AC}
Define
\begin{equation*}
f(n) :=
    \begin{cases}
        \lfloor\frac{1}{4}(n-1)^2\rfloor & \text{if } n \text{ is even or } n=1,3\\
        \frac{1}{4}(n-1)^2+1 & \text{if } n \text{ is odd, } n\neq1,3
    \end{cases}
\end{equation*}
Then $\ex(n,\AAA\CC)=f(n)$.
Moreover, for $n\geq 4$ the only extremal $\AAA\CC$-free families are the members of $\FFF_{(n)}$.
\end{thm}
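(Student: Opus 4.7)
I plan to prove the theorem by induction on $n$, with small cases $n\le 6$ handled directly. For $n=5$, $\CC$-freeness alone gives $|\FF|\le 5$ via the double count $2|\FF|=\sum_{|S|=4}|\FF\cap\binom{S}{3}|\le 2\cdot 5$; at equality each vertex has degree $3$ and a triangle-free link on $4$ vertices, and a brief local check rules out a $K_{1,3}$ link, forcing a $P_4$ link at every vertex and hence the five-ring.

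The first general fact I would record is that every triple of an $\AAA\CC$-free family has at least one own pair. If $F=\{x,y,z\}$ had witnesses $\{x,y,a\},\{y,z,b\},\{x,z,c\}$ showing that each of its pairs has degree $\ge 2$, then any coincidence among $a,b,c$ would produce three triples on a single 4-set (a $\CC$), while pairwise distinct $a,b,c$, necessarily all outside $F$, would make the three witnesses form an $\AAA$.

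The main inductive step is a case analysis on the maximum pair degree $\Delta_2(\FF):=\max_e\deg_\FF(e)$. When $\Delta_2=n-2$, attained at a pair $\{x,y\}$, the set $Z:=\{v:\{x,y,v\}\in\FF\}$ equals $V\setminus\{x,y\}$ entirely, and $\CC$-freeness on each 4-set $\{x,y,a,b\}$ with $a,b\in Z$ prohibits any triple $\{x,a,b\}$ or $\{y,a,b\}$. Hence $\FF$ decomposes as $\{\{x,y,v\}:v\in V\setminus\{x,y\}\}\cup\FF'$ with $\FF'\subseteq\binom{V\setminus\{x,y\}}{3}$; the size count gives $|\FF'|\ge f(n)-(n-2)=f(n-2)$, so induction identifies $\FF'\in\FFF_{(n-2)}$ and $\FF=(\FF')^{+2}\in\FFF_{(n)}$. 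When $\Delta_2=n-3$, which corresponds to $\FF$ being a 4-extension at the top and occurs only for $n$ sufficiently large, I plan to locate a second pair $\{z,w\}$ of degree $n-3$ disjoint from $\{x,y\}$ by examining the one or two triples through $x$ or $y$ that fall outside the crown at $\{x,y\}$; $\CC$-freeness together with $\AAA$-freeness should pin down the 4-set $\{x,y,z,w\}$, after which peeling it off and using the identity $f(n)-(2n-6)=f(n-4)$ completes the inductive step.

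The main obstacle is the branch $\Delta_2\le n-4$. For even $n$ this should correspond precisely to $\FF=\FF^*_n$ (which itself has $\Delta_2\approx n/2$), and for odd $n\ge 5$ it should be impossible since $|\FF^*_n|=\lfloor(n-1)^2/4\rfloor<f(n)$. To rule out the intermediate regime, my approach is to pick an own pair from each triple to form a graph $G$ on $V$ with $|\FF|$ edges, and—through a careful selection of representatives together with an analogue of the Tur\'an-stability Lemma~\ref{lem:ABC}—force $G$ to have the bipartite structure of a star at one vertex, invoking $\AAA$-freeness to promote the partial structure to the full $\FF^*_n$. This is more delicate than the corresponding argument in the proof of Theorem~\ref{thm:ABC} because $\BB$ configurations are permitted here, so the own-pair graph $G$ need not be globally triangle-free without a careful choice of own pairs; threading $\AAA$-freeness through the residual Tur\'an stability will be the step requiring the most effort.
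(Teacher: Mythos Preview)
Your reduction in the $\Delta_2=n-2$ case is correct and is exactly the paper's $2$-extension peeling (the observation preceding Lemma~\ref{lem:reduction}). The $\Delta_2=n-3$ sketch is plausible but already needs real work: from $\deg(\{x,y\})=n-3$ with missing vertex $w$ you only get $\deg(\{x,w\})+\deg(\{y,w\})\le n-3$ (by $\CC$-freeness on $\{x,y,w,a\}$), and combining this with $|\HH|\le f(n-2)$ on $V\setminus\{x,y\}$ gives $|\FF|\le 2(n-3)+f(n-2)=f(n)+(n-4)$, far from tight. Producing a second disjoint pair of degree $n-3$ and verifying (K1)--(K2) of Lemma~\ref{lem:reduction} is not as automatic as your outline suggests.

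The real gap is the $\Delta_2\le n-4$ branch. Your plan---one own pair per triple, then Tur\'an stability in the spirit of Lemma~\ref{lem:ABC}---faces two obstacles. First, with $\BB$ allowed the own-pair graph need not be triangle-free, and there is no reason a ``careful selection'' can always fix this; in the $\AAA\BB\CC$ proof triangle-freeness came for free precisely because $\BB$ was excluded. Second, even granting triangle-freeness, Tur\'an only gives $|E(G)|\le\lfloor n^2/4\rfloor$, which exceeds $f(n)$ by roughly $n/2$; you have no mechanism to recover that deficit. In the $\AAA\BB\CC$ case the gap was closed because each triple contributed two \emph{remote} pairs to $\overline G$, but here the non-own pairs of a triple in $\FF_1$ can be covered many times, so that remoteness argument collapses.

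The paper's route is genuinely different and supplies the missing leverage. Triples are split into those with exactly one own pair ($\FF_1$) versus at least two ($\FF_2$), with five-rings segregated into $\RR$. The crucial new object is the graph $G_3$ of pairs joined by a path of length $2$ in the $\FF_1$-own-pair graph $G_1$: Lemma~\ref{lem:G3} shows (via a local case analysis in which five-rings are the unique obstruction) that no triple of $\FF$ covers a $G_3$ edge, so $E(G_3)$ is disjoint from $E(G_R)\cup E(G_1)\cup E(G_2)$. The purely graph-theoretic Lemma~\ref{lem:G13}, $|E(T(G))|\ge|E(G)|-\lfloor n/2\rfloor$ with its structural equality cases, then feeds into the core identity~\eqref{eq:96core}, which expresses $2|\FF|$ as $\binom{n}{2}-\frac{n}{2}$ plus component-wise defects; this is what converts the naive $n^2/4$ into the correct $f(n)$. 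The remaining six-case analysis on components of $G_4$ (isolated vertices, pendant edges, trees in near-optimal components) is where the $\FF^*_n$ structure and the $2$-/$4$-extension reductions are finally forced. Your proposal has no analogue of $G_3$, Lemma~\ref{lem:G3}, or Lemma~\ref{lem:G13}, and these are not easily replaced.
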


It is easy to calculate that for $n$ even $\FFF_{(n)}$ has exactly $2f_{n/2}-1$ members (up to isomorphism), where $f_1,f_2, \dots$ is the Fibonacci sequence defined as $f_1=f_2=1$ and $f_k=f_{k-1}+f_{k-2}$ for $k>2$.
For odd $n\geq 5$ the size of $\FFF_{(n)}$ can also be obtained from a Fibonacci number, $|\FFF_{(n)}|=f_{(n-3)/2}$.
The multitude of extremal constructions may explain why the proof of Theorem~\ref{thm:AC} turned out to be more complicated than other proofs in this paper. Before turning to the proof itself, we further investigate the structure of possible maximal families and we also need a graph theoretic lemma (presented in Subsection~\ref{ss95}).

\subsection{Reducing $\AAA\CC$-free families}\label{subsec:ACreduction}

Suppose that the $\AAA\CC$-free triple system $\FF$ on $n$ vertices has a pair $u,v\in V$ such that $\deg_\FF(uv)=n-2$ and let $\HH$ be the family of triples of $\FF$ contained in $V\setminus \{ u,v\}$. Then $\FF= \HH^{+2}$.
This observation leads to the following useful lemma.

\begin{lem} \label{lem:reduction}
Let $\FF$ be an $\AAA\CC$-free triple system on the $n$-vertex set $V$.
Suppose that there are disjoint pairs $e_1, \dots, e_k\subset V$ ($1\leq k< \frac{n}{2}$), $K:= \cup e_i$,
 having the following properties:
\begin{enumerate} 
\item[{\rm (K1)}] There is no triple $F\in \FF$ such that $|F\cap K|=1$.
\item[{\rm (K2)}] If $F\in \FF$ meets both $e_i$ and $e_j$ then $F\subset e_i\cup e_j$
 (for all $1\leq i\neq j\leq k$).
 \end{enumerate}
Let $\HH$ denote the family of triples of $\FF$ contained in $V\setminus K$. Then
\begin{equation}\label{eq:redu}
  |\FF|- \lfloor\frac{1}{4}(n-1)^2\rfloor \leq  |\HH|- \lfloor\frac{1}{4}(n-2k-1)^2\rfloor,
  \end{equation}
and here equality holds if and only if
there is a sequence of $2$- and $4$-extensions of $\HH$ resulting in $\FF$.
\end{lem}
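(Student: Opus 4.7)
The plan is to classify the triples in $\FF\setminus\HH$ using (K1)--(K2), bound their number via $\CC$-freeness, and then in the equality case use $\AAA$-freeness to reconstruct a sequence of extensions.

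\emph{Upper bound.} By (K1), every $F\in\FF\setminus\HH$ has $|F\cap K|\in\{2,3\}$. If $|F\cap K|=2$, then (K2) forces the two $K$-vertices into a common $e_i$ (otherwise $F\subset e_i\cup e_j$, contradicting a vertex in $V\setminus K$), so $F=e_i\cup\{v\}$ with $v\in V\setminus K$. If $|F\cap K|=3$, then $F\subset K$ and (K2) gives $F\subset e_i\cup e_j$ for some $i\neq j$. Defining
\[
a_i:=|\{v\in V\setminus K:\ e_i\cup\{v\}\in\FF\}|,\qquad c_{ij}:=|\{u\in e_j:\ e_i\cup\{u\}\in\FF\}|,
\]
each triple of $\FF\setminus\HH$ is counted exactly once in $\sum_i a_i+\sum_{i\neq j}c_{ij}$. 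Clearly $a_i\le n-2k$, and $\CC$-freeness yields $c_{ij}+c_{ji}\le 2$ (any three of the four triples in $\binom{e_i\cup e_j}{3}$ form a $\CC$). Thus $|\FF\setminus\HH|\le k(n-2k)+2\binom{k}{2}=k(n-k-1)$, and a parity check shows $\lfloor\tfrac{1}{4}(n-1)^2\rfloor-\lfloor\tfrac{1}{4}(n-2k-1)^2\rfloor=k(n-k-1)$ in every case, giving (\ref{eq:redu}).

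\emph{Equality case.} Tightness forces $a_i=n-2k$ for all $i$ and $c_{ij}+c_{ji}=2$ for all $i\neq j$. Write $i\to j$ if $c_{ij}=2$ and $i\sim j$ if $c_{ij}=c_{ji}=1$. I will prove, using $\AAA$-freeness, the \emph{transitivity claim}: for distinct $i,j,\ell$, whenever both $(i,j)$ and $(j,\ell)$ carry a relation in $\{\to,\sim\}$ with at least one of them being $\to$, then $i\to\ell$; consequently $i\sim j$ and $j\sim\ell$ simultaneously is impossible. The proof selects triples $e_i\cup\{u_j\}\in\FF$ (witnessing $i\to j$ or the ``$e_i$-side'' of $i\sim j$), $e_j\cup\{u_\ell\}\in\FF$ (analogously for $j,\ell$), and---assuming for contradiction $c_{\ell i}\ge 1$---a triple $e_\ell\cup\{u_i\}\in\FF$; direct verification shows these three triples have pairwise singleton intersections on the six distinct vertices of $e_i\cup e_j\cup e_\ell$, hence form a forbidden $\AAA$. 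Applied with $i$ and $\ell$ swapped, the same argument rules out $i\sim j\sim\ell$. Therefore the $\sim$-pairs form a matching on $\{1,\dots,k\}$, and contracting each mixed pair gives a tournament whose transitivity yields a total order.

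\emph{Reconstruction.} Let the topmost node in this order be either a single pair $e_i$ or a mixed pair $\{e_i,e_j\}$. Counting using $a_i=n-2k$ and the tightness of dominance shows that in the first case, the triples of $\FF$ meeting $e_i$ are exactly $\{e_i\cup\{v\}:v\in V\setminus e_i\}$, so $\FF=(\FF')^{+2}$ with $\FF':=\FF\cap\binom{V\setminus e_i}{3}$; in the second case, the triples meeting $e_i\cup e_j$ consist of the full $e_i$- and $e_j$-crowns outside of $e_j$ and $e_i$ respectively, together with exactly two ``connector'' triples in $\binom{e_i\cup e_j}{3}$ (one through each of $e_i,e_j$). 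Each of the four possible mixed configurations for the connectors coincides, after relabeling within $e_i$ and $e_j$, with the $\{123,234\}$ pattern from the definition of $\FF^{+4}$, so $\FF=(\FF')^{+4}$ with $\FF':=\FF\cap\binom{V\setminus(e_i\cup e_j)}{3}$. In either case $\FF'$ is $\AAA\CC$-free, satisfies (K1)--(K2) with the remaining pairs, and still attains equality in (\ref{eq:redu}), so induction on $k$ recovers the full extension sequence; the converse is a routine count. The main obstacle is the equality analysis---verifying the $\AAA$ construction across the sub-cases of $(c_{ij},c_{ji})$ and confirming that each mixed connector configuration matches the $4$-extension pattern up to the allowed within-pair relabelings.
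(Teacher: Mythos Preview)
Your proof is correct and follows essentially the same route as the paper. The upper bound argument is identical; for the equality case the paper defines an oriented graph $T_K$ on $\{e_1,\dots,e_k\}$ with an arrow $e_i\to e_j$ whenever $c_{ij}\ge 1$, observes that $\AAA$-freeness forbids directed triangles in $T_K$ (this is exactly your transitivity claim), deduces that $T_K$ is a transitive tournament together with a matching of reversed arrows (your $\sim$-pairs), and then reads off the $2$- and $4$-extensions in one sentence---your induction peeling off the topmost node is just an explicit unpacking of that last step.
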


If $\FF$ possesses such a set of pairs, then we call it {\em reducible}.

\medskip
\noindent{\bf Proof of Lemma~\ref{lem:reduction}.}\quad
The family $\FF\setminus \HH$ contains two kinds of triples,
those meeting $V\setminus K$ and containing an $e_i$ and those contained in some $e_i\cup e_j$.
Since every four vertices can carry at most two triples we get
$$   |\FF|-|\HH| \leq k(n-2k) + 2 \binom{k}{2} = \lfloor\frac{1}{4}(n-1)^2\rfloor - \lfloor\frac{1}{4}(n-2k-1)^2\rfloor.
  $$
We are done with inequality~\eqref{eq:redu}.

In the case of equality we have $e_i \cup \{v\} \in \FF$ for all $1 \le i \le k$ and $v \in V \setminus K$, and
each $e_i\cup e_j$ contains exactly two triples from $\FF$.
Define an oriented graph $T_K$ with vertex set $\{ e_1, \dots, e_k\}$
 as follows. Add an arrow from $e_i$ to $e_j$ if there is a triple $F$ containing $e_i$ and meeting $e_j$.
Note that $T_K$ has at least one edge between $e_i$ and $e_j$, and it also might have two edges in opposite directions.
However, $T_K$ does not contain oriented triangles because $\FF$ is $\AAA$-free.
This implies that the pairs $\{ e_i, e_j\}$ with double arrows are pairwise disjoint, so they form a matching, say $L$.
We obtain that $T_K$ consists of a transitive tournament (say, there is an arrow from $e_i$ to $e_j$ for $1\leq j < i \leq k$)
together with a few reversed arrows $e_{i}\to e_{i+1}$ for $\{ e_{i},e_{i+1}\} \in L$.
Every $e_i$ outside $L$ corresponds to a 2-extension, and the pairs from $L$ define 4-extensions. \qed

\subsection{The structure of $\AAA\CC$-free families: partitioning the triples and the pairs}  


Let $\FF$ be an $\AAA\CC$-free triple system on $[n]$.
Recall that a five-ring is a triple system consisting of $5$ triples on $5$ vertices that is isomorphic to $\FF^5_5$ (defined in Subsection~\ref{subsec:AC}). Five-rings that are contained in the triple system $\FF$ will play a special role in our analysis. Suppose that $\PP \subseteq \FF$ is a five-ring. Then $V(\PP)$ contains no more triples from $ \FF \setminus \PP$.
Also, the case of $|F\cap V(\PP)|=2$ for some $F \in \FF$ would create an $\AAA$ configuration.
We conclude that triples in $\FF \setminus \PP$ can meet $V(\PP)$ in at most one vertex.
So the 10 pairs of $\PP$ are contained only in the triples from $\PP$.
Let $\RR\subseteq \FF$ be the family of triples contained in five-rings and let $G_R$ be the graph on $[n]$ whose edges are the pairs contained in members of $\RR$. If needed we split $G_R$ into two graphs, $G_{R1}$ and $G_{R2}$ to indicate if an edge is covered once or twice. We have $|\RR|=|E(G_{R1})|=|E(G_{R2})|$ and $|E(G_R)|= 2|\RR|$.


Every triple $F\in \FF$ contains at least one own pair.
We partition $\FF\setminus \RR$ into $\FF_1\cup\FF_2$ where $\FF_1$ consists of the triples with exactly one own pair, and $\FF_2$ of those with at least two own pairs.
Let $G_1$ be the graph on $[n]$ whose edges are the own pairs of the triples in $\FF_1$.
Picking exactly two own pairs of every triple in $\FF_2$, let $G_2$ be the graph on $[n]$ formed by them. Thus $|E(G_1)|=|\FF_1|$ and $|E(G_2)|=2|\FF_2|$.
As all edges in $G_1$ and $G_2$ are own pairs of some $F\in {\mathcal F}$
 we have $E(G_1)\cap E(G_2)=\emptyset.$


Define the graph ${G}_3$ as
\begin{equation*}
 E({G}_3) = \{xy\colon x,y \text{ are joined by a path of length $2$ in }  G_1\}. 
\end{equation*}
We will require the following fact about the edges of $G_3$.  

\begin{lem} \label{lem:G3}
If $xy \in E(G_3
 )$ then no triple in $\FF$ contains both $x$ and $y$.
\end{lem}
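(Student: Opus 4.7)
My plan is a proof by contradiction. Assume $xy \in E(G_3)$, witnessed by $F_1 = \{x,z,a\} \in \FF_1$ with own pair $xz$ and $F_2 = \{y,z,b\} \in \FF_1$ with own pair $yz$, and suppose some $F = \{x,y,c\} \in \FF$. Own-ness of $xz$ and $yz$ immediately forces $c \ne z$, $a \ne y$ and $b \ne x$ (otherwise $xz$ or $yz$ would sit in a second triple). I branch on the coincidence pattern of the three ``third vertices'' $a,b,c$.

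If $a,b,c$ are pairwise distinct and disjoint from $\{x,y,z\}$, the triples $F,F_1,F_2$ pairwise share only one vertex and span six vertices, giving an $\AAA$. If $c=a=b$ they sit on four vertices and form a $\CC$. If $a=b \ne c$, the non-own pairs $xa$ of $F_1$ and $ya$ of $F_2$ each demand a further triple; a short case-check (any alternative third vertex gives an $\AAA$ with $F$ and $F_2$ or $F_1$, or a $\CC$) pins those to $\{x,a,c\}$ and $\{y,a,c\}$, which together with $F$ form a $\CC$ on $\{x,y,a,c\}$.

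By the $x \leftrightarrow y$, $a \leftrightarrow b$ symmetry the only remaining scenario is $c=a \ne b$, i.e.\ $F = \{x,y,a\}$. Here the non-own pair $za$ of $F_1$ demands a further triple $\{z,a,e\}$; checking each admissible $e$ rules out everything except $e=b$, producing $F_1'' := \{z,a,b\} \in \FF$. Since $xa$ is non-own in $F$ and every $\AAA\CC$-free triple has at least one own pair (a standard observation: a triple with no own pair produces, together with three witnesses, either an $\AAA$ or a $\CC$), at least one of $xy$, $ya$ must be own in $F$. If $xy$ is own, then the non-own pair $yb$ of $F_2$ gives a second triple $F_2' = \{y,b,q\}$; the own-ness of $xy$ rules out $q=x$, a $\CC$ on $\{y,z,a,b\}$ via $F_2', F_2, F_1''$ rules out $q=a$, and any other $q$ makes $F, F_1'', F_2'$ an $\AAA$.

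The delicate sub-case is when $ya$ is own in $F$ but $xy$ is not. Then $\deg_\FF(xy) \ge 2$ yields a companion triple $F^* = \{x,y,t\}$; checking the admissible values (a direct $\AAA$ among $F_1, F_2, F^*$ if $t$ is a new vertex, a clash with the own pair of $F_1$ if $t=z$, and use of $\deg_\FF(ya)=1$ against the forced triple through $yb$) forces $t=b$, so $F^* = \{x,y,b\}$. At that point the five triples $F, F^*, F_1, F_2, F_1''$ live on $\{x,y,z,a,b\}$ and, after a suitable relabelling, are isomorphic to $\FF^5_5$; hence they form a five-ring inside $\FF$, placing $F_1$ in $\RR$ and contradicting $F_1 \in \FF_1$ (these classes are disjoint by construction). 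This ``hidden five-ring'' phenomenon is the main obstacle, and it is precisely the reason for splitting $\FF$ into $\RR$, $\FF_1$ and $\FF_2$ as in the setup.
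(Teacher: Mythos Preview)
Your proof is correct and follows essentially the same route as the paper's: set up the three triples, eliminate the ``generic'' coincidence patterns via an immediate $\AAA$ or $\CC$, and in the surviving case chase the non-own pairs until five triples on five vertices emerge as a five-ring, contradicting $F_1\in\FF_1$. The only differences are organizational. In the final case the paper goes straight through the non-own pair $yb$ of $F_2$, forces the extra triple to be $xyb$, and reads off the five-ring; your split into ``$xy$ own'' versus ``$ya$ own but $xy$ not'' is not needed (if $xy$ were own, the paper's forced triple $xyb$ already gives the contradiction one step earlier, so no separate treatment is required). Your parenthetical about ``$\deg_\FF(ya)=1$ against the forced triple through $yb$'' is unclear as written, but the argument stands without it: the values of $t$ are exhausted by ``new vertex $\Rightarrow \AAA$ with $F_1,F_2$'', ``$t=z \Rightarrow$ clash with the own pair $xz$'', leaving $t=b$.
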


\begin{proof}  
Let $v$ be a vertex such that $xv,vy \in E(
{G}_1)$, and let $F_1$ and $F_2$ be the triples in $
{\FF}_1$ having $xv$ and $vy$, respectively, as their unique own pair. Assume for the sake of contradiction that the triple $F_3$ in $\FF$ contains $x$ and $y$. Clearly, $xvy$ cannot be a triple in $\FF$ because it contains both $xv$ and $vy$. Therefore $F_1,F_2,F_3$ are all distinct and each of them contains a different pair from $\{x,v,y\}$ and a vertex outside it. Say $F_1 = xvw$, $F_2 = vyz$, $F_3 = xyu$. To avoid forming an $\AAA$ or a $\CC$, two of the vertices $w,z,u$ must be the same and the third must be different.

We consider first the case when $w=z$. Since $xw$ is not an own pair of $F_1$, there is another triple in $\FF$ containing it, say $F_4 = xws$. If $s \notin \{y,u\}$ then $F_2,F_3,F_4$ form a forbidden $\AAA$. If $s=y$ then $F_1,F_2,F_4$ form a forbidden $\CC$. Thus $s=u$ is the only possibility. Repeating the same arguments for $yw$ which is not an own pair of $F_2$, we find the triple $F_5 = ywu$ in $\FF$. But now $F_3,F_4,F_5$ form a $\CC$, giving a contradiction.

Thus, we are left with the case when $w \ne z$ and $u$ is equal to one of them, w.l.o.g.\ $u=w$. Since $vw$ is not an own pair of $F_1$, there is another triple in $\FF$ containing it, say $F_4 = vws$. If $s \notin \{y,z\}$ then $F_2,F_3,F_4$ form a forbidden $\AAA$. If $s=y$ then $F_1,F_3,F_4$ form a forbidden $\CC$. Thus $s=z$ is the only possibility. Since $yz$ is not an own pair of $F_2$, there is another triple in $\FF$ containing it, say $F_5 = yzt$. If $t \notin \{x,w\}$ then $F_3,F_4,F_5$ form a forbidden $\AAA$. If $t=w$ then $F_2,F_4,F_5$ form a forbidden $\CC$. Thus $t=x$ is the only possibility. Summarizing, we have the following five triples in $\FF$: \[ F_1 = vwx, F_3 = wxy, F_5 = xyz, F_2 = yzv, F_4 = zvw. \]
They form a five-ring, contradicting the fact that $F_1$ and $F_2$ are in $
{\FF}_1$, hence not part of a five-ring.
\end{proof}  

 \vspace{10pt}

Our Lemma~\ref{lem:G3} implies
 that $E(G_3)$ is disjoint from $E(G_R)\cup E(G_1)\cup E(G_2)$, so the following four 
  edge sets are pairwise disjoint:
$$
E(
{G}_R), \;E(
{G}_1), \;E(G_2), 
\;E(
G_3).
$$
By adding a fifth 
one
$$
E(G_4) := \{xy\colon x\neq y, xy \text{ is in none of the above}\},
$$
we obtain a partition of
$E(K_n)$ into 
five graphs.
Figure~\ref{fig:tripletypes} shows, for every possible triple in $\FF$, to which of the above graphs its pairs belong.
Again, if needed, we split $G_4$ into three graphs, $G_{40}$, $G_{41}$ and $G_{42}$ to indicate if an edge is not covered, covered once or at least twice by members of $\FF$, respectively.

\begin{figure}
\centering
\includegraphics[width=0.4\textwidth]{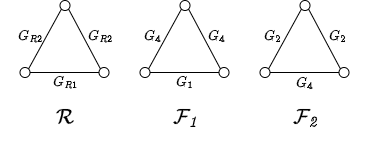}
\caption{\label{fig:tripletypes}Assignment of the pairs of each triple in $\FF$ to graphs}
\end{figure}

The discussion above yields the following observations, to be used later:
\begin{align}
\label{eq:geq2}
    \deg_{\FF}(xy)>2 & \:\Rightarrow\: xy\in E(G_{42})\\
\label{eq:eq2}
    \deg_{\FF}(xy)=2 & \:\Rightarrow\: xy\in E(
    G_{R2})\cup E(G_{42})\\
\label{eq:eq0}
    \deg_{\FF}(xy)=0 & \:\Leftrightarrow\: xy\in E(
    G_3)\cup E(G_{40})
\end{align}

\subsection{{Proof of Theorem~\ref{thm:AC}}: three cases without induction}

Let $\FF$ be an $\AAA\CC$-free triple system on $[n]$ and suppose that  $|\FF|=\ex(n,\AAA\CC)$.
Constructions showing $|\FF|\geq f(n)$ were given above.
It is trivial to check their optimality for $n\leq 4$, so from now on we may suppose that $n\geq 5$.
We will prove that $|\FF|\leq f(n)$ and $\FF\in \FFF_{(n)}$.
We break the argument into six cases.

\subsubsection{Case 1: There is an isolated vertex in $G_4$}\label{sss:941}
Here we assume that $x$ is an isolated vertex in $G_4$.
In this case all edges $xy$ belong to either $G_2$ or $G_R$. Indeed, $xy\notin E(G_4)$. Also $xy\in E(G_1)$ implies that there is an $xzy\in \FF_1$, so there is a path $x$-$z$-$y$ in $G_4$. But this is impossible since there is no $xz\in E(G_4)$, so $xy\notin E(G_1)$.
If there is no $xy$ edge in $E(G_1)$  then there is no $xy\in E(G_3)$ either.

Consider the graph $\FF[x]$, the link of $x$ in $\FF$.
If the pair $xy$ is contained in $d$ triples (i.e., $\deg_\FF(xy)=d$) then the degree of the vertex $y$ in the graph $\FF[x]$ is exactly $d$.
Since $xy\not\in E(G_4)$ all the degrees in $\FF[x]$ are at most 2 by observation~(\ref{eq:geq2}).
Moreover, if a component $C$ of $\FF[x]$ has a vertex of degree 2 then by observation~(\ref{eq:eq2}), $V(C)\cup\{x\}$ is the vertex set of a five-ring, and $C$ is a path of length $3$. Thus, any component $C$ of $\FF[x]$ that is not an isolated vertex is of one of two forms:


\begin{enumerate}[label=\Roman*]
    \item A single edge $e$. \\We let $k\geq0$ be the number of such components, and denote them by $e_1, ..., e_k$. We denote by $K$ the set of vertices of these components, $|K|=2k$.
    \item A path $P$ of length $3$. \\We let $\ell\geq0$ be the number of such components, and denote them by $P_1, ..., P_\ell$. We denote by $L$ the set of vertices of these components, $|L|=4\ell$.
\end{enumerate}
Note that $xy\in E(G_2)$ for any $y\in K$, and $xy\in E( 
 G_R)$ for any $y\in L$.
We claim that there are no isolated vertices in $\FF[x]$. Indeed, suppose that $z$ is an isolated vertex. Then $\deg_{\FF}(xz)=0$ and hence by observation~(\ref{eq:eq0}), $xz\in E( 
G_3)\cup E(G_{40})$. This contradicts the observation above that all edges incident to $x$ belong to either $G_2$ or $G_R$.

Thus, $\FF[x]$ gives the following decomposition of $[n] \setminus \{x\}$: \[ [n] \setminus \{x\} = K \cup L, \text{  where  } |K|=2k,\; |L|=4\ell. \]
Note that $n$ must be odd in this case since $n=|K|+|L|+1$. 
Next, we bound the number of triples contained in $K\cup L$:
\begin{itemize}
    \item[-] Each $P_i$ contains two triples (those triples in the five-ring that do not include $x$). Other than those, no triple in $K\cup L$ contains two vertices of the same $P_i$.
    \item[-] To avoid an $\AAA$ configuration, no triple in $K\cup L$ meets three components of $\FF[x]$,
       and no triple meets $P_i$ and $P_j$ when $i\neq j$.
    \item[-] Thus, any triple in $K\cup L$ that is not contained in a $P_i$, consists of both vertices of some $e_i$, and one vertex of another component.
    \item[-] To avoid a $\CC$ configuration, for every $i\neq j$ there are at most $2$ triples contained in $e_i \cup e_j$.
\end{itemize}

In total, we have
\begin{equation}\label{eq:96}
    |\FF|\leq k+5\ell +2 {{k}\choose{2}} +k\cdot 4\ell=(k+2\ell)^2+5\ell-4\ell^2\leq \left(\frac{n-1}{2}\right)^2+1=f(n),
  \end{equation}
which completes the proof of the upper bound in Case 1.

If equality holds in~\eqref{eq:96} then $\ell=1$, $k=\frac{n-5}{2}$. If $k=0$ then $n=5$ and $\FF$ is a five-ring, i.e., $\FF \in \FFF_{(5)}$. Suppose now that $k \ge 1$. According to the above observations $K$
and $\FF$ satisfy the two constraints of Lemma~\ref{lem:reduction}, so $\FF$ is reducible.
Inequality~\eqref{eq:redu} and $|\FF|=f(n)$ yield
$$
 |\FF|- \lfloor\frac{1}{4}(n-1)^2\rfloor =  |\HH|- \lfloor\frac{1}{4}(n-2k-1)^2\rfloor 
 =|\HH|- 4 =1
$$
where $\HH$ is a five-ring.
Then Lemma~\ref{lem:reduction} gives that $\FF$ is obtained from $\HH$ by a sequence of $2$- and $4$-extensions, i.e., $\FF\in \FFF_{(n)}$. \qed

\subsubsection{Case 2: There is a pendant $G_{4}$ edge which is in $G_{40}$}\label{sss:942}

Let $x$ be a degree one vertex of $G_4$, and let $y$ be its unique neighbor in $G_4$.
In this case we suppose that $\deg_\FF(xy)=0$, the pair $xy$ does not occur in any triple of $\FF$.
Again, as in Case 1, there is no $G_1$ edge incident to $x$, so there is no such $G_3$ edge either.

Consider the structure of the link $\FF[x]$, similar to what we did in Case 1.
The vertex $y$ is isolated in $\FF[x]$ and  $[n] \setminus \{x\} = K\cup L \cup \{y\}$.
In particular $n$ is even here, $n=2k+4\ell+2$. The only possible triples in $\FF$ that contain $y$ and do not give rise to an $\AAA$ configuration are of the form $e_i\cup\{y\}$. There are at most $k$ of those.
Thus, like in~\eqref{eq:96}, we get
\begin{equation*}\label{eq:97}
    |\FF|\leq k+5\ell +2 {{k}\choose{2}} +k\cdot 4\ell +k= \lfloor \left(\frac{n-1}{2}\right)^2\rfloor +3\ell-4\ell^2\leq f(n).
  \end{equation*}
If equality holds here then $\ell=0$, $k=\frac{n-2}{2}$, and Lemma~\ref{lem:reduction} can be applied to obtain $\FF\in \FFF_{(n)}$. \qed

\subsubsection{Case 3: There is a pendant $G_{4}$ edge which is in $G_{41}$}\label{sss:943}

Let $x$ be a degree one vertex of $G_4$, and let $y$ be its unique neighbor in $G_4$.
In this case we suppose that $\deg_\FF(xy)=1$, the pair $xy$ occurs in a triple $xyz\in \FF$ with
  $\deg_\FF(xz)=\deg_\FF(yz)=1$.
In the process creating the graph $G_2$ we selected $xz$ and $yz$ and put $xy$ to $G_{41}$.
Redefine these graphs on these three pairs, put $yz$ into $G_{41}$ and the other pairs to $G_2$. Then $x$ becomes isolated in the new $G_4$. That case has been solved as Case 1.  \qed



\subsection{
Pairs of vertices joined by a path of length $2$}\label{ss95}
Given a graph $G$, we define a new graph $T=T(G)$ on the same vertex set as follows: two vertices $x \ne y$ form an edge in $T$ if they are joined by a path of length $2$ in $G$, i.e., there exists a vertex $z$ such that both $xz$ and $zy$ are edges of $G$.

\begin{lem} \label{lem:G13}
Let $G$ be a graph on $n$ vertices, and let $T=T(G)$ be
the graph defined above.
Then
\begin{equation} \label{eq:G13}
|E(T)|\geq |E(G)|-\lfloor \frac{n}{2} \rfloor.
\end{equation}
This inequality holds with equality if and only if one of the following is the case:
\begin{enumerate}[label=(\alph*)]
\item[{\rm (L1)}] $n$ is even, and $G$ is the disjoint union of balanced complete bipartite graphs $K(A_i, B_i)$ with $|A_i|=|B_i|=d_i$, $i=1,...,r$ where $d_i\geq1, r\geq1, \sum_{i=1}^r d_i=\frac{n}{2}$.
\item[{\rm (L2)}] $n$ is odd, and $G$ is as above except that one of the components is of the form $K(A_i, B_i)$ with $|A_i|=d_i$ and $|B_i|=d_i+1$ or is an isolated vertex.
\end{enumerate}
Moreover, if
$n$ is even and $|E(T)|= |E(G)|-\frac{n}{2} +1$, then either
\begin{enumerate}[label=(\alph*2)]
\item[{\rm (L3)}]  one component of $G$ is a $K_{d,d}$ with one edge left out, $d\geq 2$ (denote this as $K_{d,d}^-$),
  and the other components are balanced complete bipartite graphs, or
\item[{\rm (L4)}]
  there are positive integers  $d_1, d_2, \dots, d_r$, $r\geq2$ such that $\sum_{i=1}^r d_i=\frac{n}{2}+1$
   and the components of $G$ are complete bipartite graphs $K_{d_1, d_1-1}$, $K_{d_2, d_2-1}$, and $K_{d_i,d_i}$ for $i>2$.
\end{enumerate}
\end{lem}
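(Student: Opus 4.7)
My plan is to first reduce to connected graphs. If $G$ decomposes into components $C_1, \ldots, C_r$ with $|V(C_j)| = n_j$, then two vertices in different components share no common neighbor, so $E(T(G))$ splits as a disjoint union over components. Hence it suffices to show $|E(C)| - |E(T(C))| \leq \lfloor n_C/2 \rfloor$ for each connected component $C$; summing and using $\sum_j \lfloor n_j/2 \rfloor \leq \lfloor n/2 \rfloor$ gives inequality~\eqref{eq:G13}. Moreover, the summation slack equals $\lfloor k/2 \rfloor$, where $k$ is the number of odd-size components, which restricts this count in the equality (at most one odd component) and near-equality (for even $n$, at most two odd components) analyses.

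For a connected component $C$, I distinguish the bipartite and non-bipartite cases. For non-bipartite connected $C$, an odd cycle provides abundant length-$2$ chords generating $T$-edges; for instance, the chords of an induced odd $\ell$-cycle form another cycle of length $\ell$ in $T$, and triangles contribute their own edges back into $T$. A case analysis based on the shortest odd cycle and the edges attached to it shows that $C$ cannot attain per-component equality, and in fact $L(C) := |E(C)| - |E(T(C))| \leq \lfloor n_C/2 \rfloor - 2$ except for very small cases (like $K_3$, which cannot appear in (L3)--(L4) anyway by parity). This excludes non-bipartite components from the characterizations (L1)--(L4).

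The bipartite case is the technical heart. For connected bipartite $C$ with parts $A, B$ of sizes $a \leq b$, the desired bound rearranges to $\bar L_A(C) + \bar L_B(C) \leq ab - |E(C)|$, where $\bar L_A(C)$ (respectively $\bar L_B(C)$) counts the pairs within $A$ (respectively $B$) with no common neighbor in $C$, and the right-hand side is the number of non-edges of $K_{a,b}$ absent from $C$. A \emph{bad pair} $\{u, u'\} \subseteq A$ satisfies $N_C(u) \cap N_C(u') = \emptyset$, whence $d(u) + d(u') \leq b$, forcing at least $b$ of the $2b$ pairs in $\{u,u'\} \times B$ to be non-edges of $C$. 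A careful double count, matching bad pairs to the non-edges they induce, then yields the inequality. Equality forces each component to be complete bipartite with $|a-b| \leq 1$, producing $K_{d,d}$ or $K_{d,d+1}$ as in (L1), (L2).

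For the near-equality case with even $n$, the single unit of slack arises in exactly one of two ways. Either one bipartite component has per-component loss $\lfloor n_C/2 \rfloor - 1$---the equality analysis of the double count shows this happens only when that component is $K_{d,d}^-$, giving case (L3); or all components achieve per-component equality but the summation is one short, which for even $n$ requires exactly two odd-size components, each necessarily of the form $K_{d_i, d_i-1}$, giving case (L4) with all other components balanced $K_{d_j, d_j}$. The main obstacle throughout is the bipartite double count $\bar L_A + \bar L_B \leq ab - |E(C)|$ together with its tight equality and near-equality conditions; everything else reduces to careful bookkeeping.
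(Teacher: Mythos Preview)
Your approach differs from the paper's, which uses a single vertex-deletion induction: one shows that unless $G$ already has the form~(L1), there exists a vertex $x$ with $\deg_T(x)\ge\deg_G(x)$, removes it, and applies the inductive hypothesis to $G\setminus\{x\}$. Your component-by-component bipartite analysis is a natural alternative, but the core step contains a genuine error.

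The claimed ``rearrangement'' is incorrect. For a connected bipartite component $C$ with parts of sizes $a,b$, writing $|E(T(C))|=\binom{a}{2}+\binom{b}{2}-\bar L_A-\bar L_B$, the per-component bound $|E(C)|-|E(T(C))|\le\lfloor(a+b)/2\rfloor$ rearranges to
\[
\bar L_A+\bar L_B\;\le\;(ab-|E(C)|)\;+\;\Big\lfloor\tfrac{(a-b)^2}{2}\Big\rfloor,
\]
using the identity $\binom{a}{2}+\binom{b}{2}+\lfloor(a+b)/2\rfloor=ab+\lfloor(a-b)^2/2\rfloor$. You have dropped the term $\lfloor(a-b)^2/2\rfloor$, and the stronger inequality $\bar L_A+\bar L_B\le ab-|E(C)|$ that you then try to establish by a ``careful double count'' is simply false. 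Take $A=\{u_1,\dots,u_6\}$, $B=\{b_1,b_2\}$, with edges $u_1b_1,u_2b_1,u_3b_1,u_1b_2,u_4b_2,u_5b_2,u_6b_2$. This $C$ is connected bipartite with $|E(C)|=7$ and $ab-|E(C)|=5$, yet the six pairs $\{u_i,u_j\}$ with $i\in\{2,3\}$, $j\in\{4,5,6\}$ are all bad in $A$ while $\bar L_B=0$, so $\bar L_A+\bar L_B=6>5$. Thus no double count of the type you describe can give the stated bound; the missing $\lfloor(a-b)^2/2\rfloor$ is precisely what absorbs the excess when the parts are unbalanced, and it is also what forces $|a-b|\le1$ in the equality analysis. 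Without it you have neither the inequality nor the characterisations.

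The non-bipartite case is likewise only asserted: the claim that $|E(C)|-|E(T(C))|\le\lfloor n_C/2\rfloor-2$ for connected non-bipartite $C$ (outside a few small exceptions) is nontrivial and would itself require an argument, and the exceptions must then be tracked through the (L3)--(L4) bookkeeping. In short, the reduction to components is fine, but the bipartite heart of the plan is built on a false inequality and does not go through as written.
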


The inequality~\eqref{eq:G13} was proved in dual form by F\"uredi~\cite{FZ:diam2}. Below we also prove the characterization of equality and at the same time re-prove the inequality.

Note that in each of the cases~(L1)--(L4) the graph $T(G)$ is a disjoint union of complete graphs. Also, in cases~(L1) and~(L3) the graph $G$ has no isolated vertex.

\vspace{10pt}

\noindent{\bf Proof of Lemma~\ref{lem:G13}.}\quad
The graphs described in (L1) and (L2) satisfy inequality~(\ref{eq:G13}) with equality.
We show 
 that all other graphs satisfy~{\eqref{eq:G13}} with strict inequality.

For a graph $H$ and a vertex $v$, we denote by $N_H(v)$ the set of neighbors of $v$ in $H$. We observe that for every vertex $x$,
\begin{equation*}\label{eq:lem1e1}
N_T(x) = \bigcup_{y\in N_G(x)} \left( N_G(y)\setminus\{x\}\right).
\end{equation*}
In particular, if $x$ has a neighbor $y$ with $\deg_G(x)<\deg_G(y)$ then $\deg_T(x)\geq \deg_G(x)$. This implies that there always exists a vertex $x$ with $\deg_T(x)\geq \deg_G(x)$, unless every connected component $C_i$ of $G$ is $d_i$-regular for some $d_i\geq 1$, and moreover, in $C_i$ the $d_i$ neighbors of $x$ all have the same $d_i-1$ neighbors in addition to $x$, which means that $C_i\cong K_{d_i, d_i}$. In other words, if $G$ is not of the form indicated in (L1), then there exists a vertex $x$ with $\deg_T(x)\geq \deg_G(x)$.

We now proceed 
  by induction on $n$. The base case $n=1$ is trivial. For the induction step, we assume that $G$ and $T=T(G)$ are graphs on $n \ge 2$ vertices, and the lemma is true for $n-1$.

Assuming that $G$ is not of the form indicated in (L1), 
 let $x$ be a vertex with $\deg_T(x)\geq \deg_G(x)$.
Applying the induction hypothesis to $G\setminus\{x\}$ we get 
\begin{equation}\label{eq:ind}
|E(T(G\setminus\{x\}))|\geq |E(G\setminus\{x\})|-\lfloor\frac{n-1}{2}\rfloor.
\end{equation}
Therefore
\begin{equation}\label{eq:sum}
\begin{split}
|E(T)| & \geq |E(T(G \setminus \{x\}))| + \deg_T(x)\\ & \geq |E(G \setminus \{x\})| - \lfloor \frac{n-1}{2} \rfloor + \deg_G(x) = |E(G)| - \lfloor \frac{n-1}{2} \rfloor.
\end{split}
\end{equation}

When $n$ is odd, equality in~(\ref{eq:G13}) is possible, but it requires that all the above inequalities hold with equality. In particular, we must have equality in~(\ref{eq:ind}), hence by induction $G \setminus \{x\}$ has to be of the form (L1).
Consider $N_G(x)$. If it meets two or more components of $G \setminus \{x\}$, or two sides of the same component, then $T(G)$ has at least one edge with vertices in $G \setminus \{x\}$ that is not in $E(T(G \setminus \{x\}))$. This makes the first inequality in~(\ref{eq:sum}) strict. We conclude that there is a component $K(A_i,B_i)$ of $G \setminus \{x\}$ with $|A_i| = |B_i| = d_i$ such that $N_G(x)$ is contained in one of its parts, say in $A_i$. If $\emptyset \subsetneq N_G(x) \subsetneq A_i$ then $\deg_T(x) = d_i > \deg_G(x)$, making the second inequality in~(\ref{eq:sum}) strict. This leaves only two possibilities: either $N_G(x) = A_i$, in which case we can add $x$ to $B_i$ and get a $K_{d_i,d_i+1}$, or $N_G(x) = \emptyset$ making $x$ an isolated vertex in $G$. In either case, $G$ fits the description in part (L2) of the lemma.

If $n$ is even then $\lfloor \frac{n-1}{2} \rfloor < \lfloor \frac{n}{2} \rfloor$, hence~(\ref{eq:sum}) implies that inequality~(\ref{eq:G13}) holds strictly. This completes the proof of~\eqref{eq:G13} together with (L1) and (L2).

Consider now the case $|E(T)|= |E(G)|-\frac{n}{2} +1$.
Again equality must hold in~\eqref{eq:ind} and~\eqref{eq:sum} so $\deg_T(x) =\deg_G(x)$, $G \setminus \{x\}$ has to be of the form (L2). If $N_G(x) = \emptyset$ then $G$ is of the form (L4). If $N_G(x)\neq \emptyset$ then it meets only one component $C$. This component could not be an isolated vertex, because that would give $\deg_G(x)=1$, $\deg_T(x)=0$. So $C=K(A_i,B_i)$ and $N_G(x)$ is a subset of one side of $C$ while $N_T(x)$ is the entire other side. Given that $\deg_T(x) = \deg_G(x)$, there are two possibilities. If $|A_i|=|B_i|$ then $N_G(x)$ is a full side of $C$, say $B_i$, so we get $K(A_i \cup \{x\}, B_i)$, yielding case (L4). If $|A_i|=|B_i| - 1$ then $N_G(x) = B_i \setminus \{y\}$ for some $y \in B_i$, so we get $K(A_i \cup \{x\}, B_i)$ minus $xy$, yielding case (L3).
\qed

\subsubsection{Remark}

We do not use but can prove an even stronger version of Lemma~\ref{lem:G13}.

\begin{claim} \label{claim:821}
Let $G$ be a graph, and let $T=T(G)$ be
the graph defined above.
Suppose that $\{ e_1, \dots, e_\ell\}$ is a maximal matching of $G$, i.e., $A:= V(G)\setminus (\cup e_i)$ is an independent set of vertices.
Then
\begin{equation} \label{eq:G132}
|E(T)|\geq |E(G)|-\ell.
\end{equation}
This inequality holds with equality if and only if each component of $G$ is either a balanced complete bipartite graph, or a $K_{d,d}^-$ (with $d\geq 2$) contributing $d-1$ edges to the maximal matching, or a $K_{d,d+1}$ ($d\geq 1$), or an isolated vertex.
\end{claim}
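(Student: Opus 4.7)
The plan is to prove Claim~\ref{claim:821} by induction on $\ell$, removing a matched edge at each step. The base case $\ell = 0$ is immediate: if the empty matching is maximal then $G$ has no edges at all, so $|E(T)| = |E(G)| = 0$ and the bound holds. For the inductive step, pick a matching edge $e_1 = uv$ and set $G' = G \setminus \{u,v\}$. The reduced matching $\{e_2, \ldots, e_\ell\}$ remains maximal in $G'$, because its uncovered set equals $A$, which is independent by assumption.

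The central computation combines $|E(G)| - |E(G')| = \deg_G(u) + \deg_G(v) - 1$ (subtracting one for the shared matching edge) with $|E(T)| - |E(T(G'))| \geq \deg_T(u) + \deg_T(v) - [uv \in E(T)]$. The core inequality to establish is
\[ \deg_T(u) + \deg_T(v) - [uv \in E(T)] \geq \deg_G(u) + \deg_G(v) - 2. \]
This holds because every neighbor of $v$ other than $u$ is joined to $u$ by the length-$2$ path through $v$, giving $\deg_T(u) \geq \deg_G(v) - 1$, and symmetrically $\deg_T(v) \geq \deg_G(u) - 1$; if in addition $uv$ lies on a triangle $uvw$, then the path $u$-$w$-$v$ shows that $v \in N_T(u)$ and $u \in N_T(v)$, boosting the left side by the extra $+2$ needed to absorb the $[uv \in E(T)] = 1$ penalty. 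Together with the induction hypothesis $|E(T(G'))| \geq |E(G')| - (\ell-1)$, this yields $|E(T)| \geq |E(G)| - \ell$.

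For the characterization of equality I would track when each estimate in the chain is tight. Equality forces (i) $G'$ (with its induced maximal matching) to be a disjoint union of the listed component types, (ii) the pair $uv$ to lie on no triangle, (iii) $N_T(u) = N_G(v) \setminus \{u\}$ and $N_T(v) = N_G(u) \setminus \{v\}$ exactly, and (iv) every pair of vertices in $G'$ sharing a neighbor in $\{u,v\}$ to share a neighbor in $G'$ as well. Working through each possible type of component of $G'$ in turn, these conditions restrict $u$ and $v$ to attach to a single component of $G'$ in a bipartite-respecting way so that reassembling with $u,v$ produces exactly one of $K_{d,d}$, $K_{d,d+1}$, $K_{d,d}^-$, or leaves $u,v$ as isolated vertices of $G$; untouched components of $G'$ keep their listed type.

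The main obstacle will be the equality analysis itself: although the inductive inequality is clean, enumerating all the ways $u$ and $v$ can be glued to the equality forms of $G'$ without producing a new extremal structure outside the listed family takes a patient case check. In particular one has to rule out gluings that would produce a component of listed shape but equipped with the ``wrong'' maximal matching (for instance, the perfect matching of a $K_{d,d}^-$ component), and one must verify that an untouched isolated vertex of $G'$ does not combine with $u$ or $v$ to yield a forbidden configuration.
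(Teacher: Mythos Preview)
Your inequality argument is correct and complete; the equality outline is reasonable and at a level of detail comparable to the paper's own sketch. However, the route is genuinely different from the paper's.

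The paper does not induct on $\ell$. Instead it compares $G$ and $T$ \emph{locally} between matching edges: for any two matching edges $e_i,e_j$ one checks by inspection that the number of $T$-edges between $e_i$ and $e_j$ is at least the number of $G$-edges between them, and likewise between each $e_i$ and each unmatched vertex $a\in A$. Since the only $G$-edges not covered by these comparisons are the $\ell$ matching edges themselves, the inequality follows at once. Equality then localises: each bipartite block $G[e_i,e_j]$ must be empty or a $2$-matching, each $G[e_i,a]$ must be empty or a single edge, and no $e_i$ may carry a $T$-edge (no triangle through $e_i$) nor may two vertices of $A$ be $T$-adjacent. Piecing these local constraints together component-by-component yields the listed families directly.

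Your inductive approach trades this local picture for a degree-count at a single deleted edge. The inequality step is clean, but the equality reconstruction---gluing $u,v$ back onto the inductively known structure of $G'$ under conditions (ii)--(iv)---is genuinely more delicate than in the paper, because your conditions are global (they constrain \emph{all} $T$-neighbours of $u,v$) rather than local to one pair of matching edges. In particular you must rule out attachments of $u,v$ to two different components of $G'$ and control which vertices of a $K_{d,d}^-$ end up unmatched; the paper's block-by-block viewpoint handles these issues more transparently. Both approaches work, but the paper's is better suited to extracting the equality characterisation.
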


\noindent{\em Sketch of proof of Claim~\ref{claim:821}.}\quad
For two distinct edges $e_i, e_j$ in the maximal matching, let $E(G[e_i,e_j])$ denote the set of edges of $G$ with one end in $e_i$ and the other in $e_j$, and let $E(T[e_i,e_j])$ denote the analogous set for $T$. Similarly, for an edge $e_i$ and a vertex $a \in A$, let $E(G[e_i,a])$ and $E(T[e_i,a])$ denote the corresponding sets of edges with one end in $e_i$ and the other equal to $a$.

By inspection, we have $|E(T[e_i,e_j])| \geq |E(G[e_i,e_j])|$ and $|E(T[e_i,a])| \geq |E(G[e_i,a])|$ for any two edges $e_i,e_j$ and vertex $a\in A$. As this accounts for all edges of $G$ apart from $e_1,\ldots,e_{\ell}$, it implies~\eqref{eq:G132}.
In case of equality we must have equalities above and moreover, no $e_i$ and no two vertices in $A$ can form an edge in $T$. We obtain that
$E(G[e_i,e_j])$ should be either empty or two disjoint edges, and similarly $E(G[e_i,a])$ is either empty or a single edge.
The rest of the proof is just to check how these edges can fit together so as to maintain the above conditions for equality.
\qed

\subsection{The core inequality of the proof, 
    and more reductions in case of a pendant $G_4$ edge}

We will prove by induction on $n$ that $|\FF| \le f(n)$ and $\FF\in \FFF_{(n)}$.  

Look at the partition $[n]=C_1\cup \cdots \cup C_r$
where the $C_i$ are the connected components of $G_4$. By Case~1  we may suppose that $p_i:=|C_i|\geq 2$ for every $i$.
Note that every edge of $
G_1$ is contained in some $C_i$ 
 and the same is true for every edge of $
G_3$.
If $C$ is one of the $C_i$, we write $G_4^C, 
 G_1^C, 
 G_3^C$
for the corresponding graph restricted to $C$.
We have
\begin{equation}\label{eq:96core}
    \begin{split}
    2|\FF| & = 2|\RR| + 2|\FF_1| + 2|\FF_2|  \\
        & = |E(G_R)| + 2|E(G_1)| + |E(G_2)| \\
        & =|E(G_R)| + |E(G_1)| + |E(G_2)| + |E(G_3)| + |E(G_4)| -\frac{n}{2}  \\
     & {}\quad  +\left(
    n-|E( G_4)|\right) + \left( |E(G_1)| - |E(G_3)| -  \frac{n}{2}\right)  \\
     & = {n\choose2} -\frac{n}{2}
       + \sum_i \left( \left( p_i -|E( G^{C_i}_4)|\right) +  \left(|E(G_1^{C_i})| - |E(G_3^{C_i})| -  \frac{p_i}{2}\right) \right).
    \end{split}
\end{equation}
In the last line for every component $C_i$ we have
 $p_i- |E(G_4^{C_i})|\leq 1$ and  Lemma~\ref{lem:G13} applied to $G_3^{C_i} = T(G_1^{C_i})$ states  $ |E(G_1^{C_i})| - |E(G_3^{C_i})| - \lfloor \frac{p_i}{2}\rfloor \leq 0$.
So~\eqref{eq:96core} yields a pretty good upper bound for $|\FF|$ and to improve it further we will investigate the components where
  $G_4^{C_i}$ is a tree, and also the cases when $|E(G_1^{C_i})| - |E(G_3^{C_i})| -  \frac{p_i}{2}\geq -1$.

Call a component $C$ of size $p$ {\em \nice} if
\begin{equation}\label{eq:961}
  |E(G_3^C)|\leq |E(G_1^C)|-\frac{p-2}{2}.
  \end{equation}
For {\em not} \nice components the quantity
$$
 \sigma(C_i):=  \left( p_i -|E( G^{C_i}_4)|\right) +  \left( |E(G_1^{C_i})| - |E(G_3^{C_i})| -  \frac{p_i}{2} \right)
 $$
is at most $-\frac{1}{2}$.
Since $|\FF|\geq f(n)$ we must have at least one \nice component.
In such a component, Lemma~\ref{lem:G13} and~\eqref{eq:961} imply that $G_3^C$ consists of vertex disjoint complete graphs (including isolated vertices, of course).

\subsubsection{Case 4:  There is a pendant $G_4$ edge in a \nice component}\label{sss:961}

In this subsection we suppose that $C$ is a \nice component and
 $x \in C$ is a degree one vertex of $G_4$. Let $y$ be its unique neighbor in $G_4$.
Since we have already discussed the case $\deg_\FF(xy)=0$ (Case~2) and $\deg_\FF(xy)=1$ (Case~3)
 we may suppose that $\deg_\FF(xy)=:q\geq 2$.

Consider the structure of the link $\FF[x]$, similar to what we did in Case 1.
There are two further forms of components in $\FF[x]$ in addition to forms I and II which appear in $K$ and $L$, respectively.

\begin{enumerate}[label=\Roman*]
    \setcounter{enumi}{2}
    \item A star $S_q$ centered at $y$ and having $q$ leaves, $q\geq 2$. \\There is exactly one such component in $\FF[x]$. The leaves of $S_q$ can be partitioned into two (possibly empty) parts:
    \begin{equation*}
    \begin{split}
    U & \coloneqq\{u\colon xyu \in 
\FF_1, xu\in E(
G_1), yu\in E(G_4)\},\\
        V & \coloneqq\{v\colon xyv \in \FF_2, xv, yv\in E(G_2)\}.
    \end{split}
    \end{equation*}
    We write $|U|=s$, $|V|=t$, $s+t=q$.
    \item An isolated vertex $z$. \\We let $m\geq0$ be the number of isolated vertices, and denote by $M$ the set of isolated vertices. We remark that isolated vertices in $\FF[x]$ are now permissible because $x$ has neighbors in $
G_1$ (provided that $s>0$), which allows the isolated vertices to be neighbors of $x$ in $
G_3$.
\end{enumerate}
We summarize: $\FF[x]$ looks as in Figure \ref{fig:HxStructure}, and $[n] \setminus \{x\} = K \cup L \cup M \cup \{y\} \cup U \cup V$ with
$K=e_1\cup \dots \cup e_k$ and $L=P_1\cup \dots \cup P_\ell$.

\begin{figure}
\centering
\includegraphics[width=0.7\textwidth]{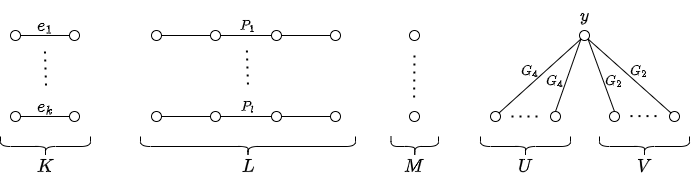}
\caption{\label{fig:HxStructure} The structure of $\FF[x]$ in 
 Case 4}
\end{figure}

\bigskip
Our next arguments exploit and combine structural information about $\FF[x]$ and about $
G_1^C, 
G_3^C,G_4^C$ based on Lemma \ref{lem:G13}.
Since the $G_1$ and $G_3$ neighbors of $x$ are in the same component, we already know that

\begin{itemize}
\item[(C1)] $\{x, y\}, M, U\subseteq C$.
\end{itemize}
Since $G_3^C$ consists of cliques and the $G_3$ neighborhood of $x$ is exactly $M$ we get
\begin{itemize} \item[(C2)] $M\cup \{x\}$ is a (maximal) clique in $G_3$.
\end{itemize}
Similarly, since $U$ is the $G_1$ neighborhood of $x$, by definition
\begin{itemize}
 \item[(C3)] $U$ is a clique in $G_3$.
\end{itemize}
 It follows from Lemma~\ref{lem:G3} that no two vertices in $M\cup \{x\}$ belong together to a triple in $\FF$ and similarly
  $U$ meets every triple in $\FF$ in at most one vertex.
\begin{itemize}
\item[(C4)] We may assume that $K=\emptyset$.
\end{itemize}

\noindent {\em Proof.}\enskip
Suppose that $k \ge 1$. First we show that the constraints (K1) and (K2) of Lemma~\ref{lem:reduction} are valid for $\FF$.
Suppose, on the contrary to (K1), that there is a $vwz\in \FF$, $K\cap vwz=\{ v\}$, $v\in e_i$.
We have $x\notin vwz$, because we know $\FF[x]$.
If $wz\cap L\neq \emptyset$ then we get an $\AAA$ configuration.
If one of $w,z$ is $y$, say $w=y$, we can choose a $z' \in U \cup V$, $z' \ne z$ (since $|U \cup V| \ge 2$) and find the $\AAA$ configuration $e_i \cup \{x\}, vyz, xyz'$. Thus we may assume that $w,z \in M \cup U \cup V$, and by (C2) at least one of them, say $z$, is in $U \cup V$. But now $e_i \cup \{x\}, vwz, xyz$ form an $\AAA$.
To check (K2) is obvious.
Applying~\eqref{eq:redu}, we get
\begin{equation*}\label{eq:redu2}
  \eps(n):=f(n) - \lfloor\frac{1}{4}(n-1)^2\rfloor \le |\FF|- \lfloor\frac{1}{4}(n-1)^2\rfloor \leq  |\HH|- \lfloor\frac{1}{4}(n-2k-1)^2\rfloor,
  \end{equation*}
implying $|\HH|\geq f(n-2k)-\eps(n-2k)+\eps(n)$. Thus $|\HH|\geq f(n-2k)$.
By induction, this must hold with equality and $\HH\in \FFF_{(n-2k)}$. Also~\eqref{eq:redu} holds with equality, so $\FF$ is obtained from $\HH$ by a sequence of $2$- and $4$-extensions, i.e., $\FF \in \FFF_{(n)}$.
\qed

\begin{itemize}
\item[(C5)] $L=\emptyset$.
\end{itemize}
\noindent {\em Proof.}\enskip
Suppose that $\ell \ge 1$. By a case check similar to the proof of (C4), one can verify that any triple $vwz \in \FF$ that meets $P_1$ in a single vertex creates an $\AAA$ configuration. Thus, the only triples in $\FF$ that meet $P_1$ are those in its five-ring. Let $\HH$ be the rest of the triples. We obtain
\begin{equation*}\label{eq:redu5}
  f(n) \le |\FF| = |\HH|+5 \leq f(n-4)+5.
  \end{equation*}
This is a contradiction, noting that under our conditions $n\geq 8$. \qed

\begin{itemize}
\item[(C6)] We may assume that $M\neq \emptyset$, therefore $U\neq \emptyset$.
\end{itemize}
\noindent Indeed, $M=\emptyset$ implies $[n]=\{x, y\}\cup U\cup V$ with $\deg_\FF(xy)=n-2$. As we have seen before Lemma~\ref{lem:reduction}, this implies that
  $\FF$ is a $2$-extension of $\FF\setminus \{ x,y\}$, and we are done by induction.

\begin{itemize}
\item[(C7)] $y$ is an isolated vertex in $
G_1^C$.
\end{itemize}
\noindent
Indeed, suppose that $yz\in E( 
G_1^C)$.  Clearly $z\in M$.
Every $z\in M$ has a $G_1$ neighbor in $U$, say $zw\in E(G_1)$.
Then, by definition, the two $G_1$ edges $yz$ and $zw$ place $yw$ in $G_3$. However, $yw$ is a $G_4$ pair.

    Since $y$ is isolated in $
    G_1^C$, according to the structure of $G_1^C$ described in  Lemma~\ref{lem:G13}, only cases (L2) or (L4) are possible, so every other component of $
    G_1^C$ is a (balanced or almost balanced) complete bipartite graph.
Consider the $G_1$ component $\widetilde C$ to which $x$ belongs. We know from $\FF[x]$ that $N_{
G_1^C}(x)=U$ and $N_{
G_3^C}(x)=M$. Thus, $\widetilde C$  
is a complete bipartite graph of $G_1$-edges with parts
$\{x\}\cup M$ and $U$.

Let us conclude this case with the following statement.
\begin{claim}\label{cl:96}
Suppose that $|C|=p$ is even and $C$ is a \nice component with a pendant $G_4$ edge. Then  equality holds in~\eqref{eq:961}, i.e.,
 $|E(G_1^{C})| - |E(G_3^{C})| -  \frac{p}{2}=-1$. \end{claim}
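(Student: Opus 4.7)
The plan is to trap the quantity $|E(G_1^C)| - |E(G_3^C)| - p/2$ between two tight bounds and then to eliminate the lower possibility using property (C7). First I would note that $G_3^C = T(G_1^C)$: every $G_1$-edge lies within a single $G_4$-component, so any $G_1$-path of length $2$ between two vertices of $C$ stays inside $C$, which means that the path-of-length-$2$ graph of $G_1^C$ coincides with the restriction of $G_3$ to $C$. Applying Lemma~\ref{lem:G13} to $G_1^C$ on the $p$-vertex set $C$ (with $p$ even) yields the lower bound
$$|E(G_3^C)| \ge |E(G_1^C)| - p/2.$$
On the other hand, the \nice hypothesis~\eqref{eq:961} rearranges to $|E(G_3^C)| \le |E(G_1^C)| - p/2 + 1$. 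Since $|E(G_3^C)|$ is an integer, it must equal one of these two adjacent values, and the claim is that it takes the larger one.

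The main (essentially only) step is to rule out equality in Lemma~\ref{lem:G13}. If that equality held, then because $p$ is even the characterization forces $G_1^C$ to fall under case (L1): a vertex-disjoint union of balanced complete bipartite graphs $K_{d_i,d_i}$ with $d_i \ge 1$. In particular, $G_1^C$ would have no isolated vertex. But the vertex $y$ lies in $C$ (since $xy$ is a $G_4$-edge with $x \in C$), and property (C7) says that $y$ is isolated in $G_1^C$. This contradiction forces $|E(G_3^C)| = |E(G_1^C)| - p/2 + 1$, which is exactly the claimed equality. Since all the needed ingredients are already prepared in Subsections~\ref{ss95} and~\ref{sss:961}, I do not foresee any real obstacle; the argument reduces to a one-line invocation of the equality case of Lemma~\ref{lem:G13}.
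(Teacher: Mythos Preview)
Your proposal is correct and follows essentially the same approach as the paper: both arguments sandwich $|E(G_3^C)|$ between the lower bound from Lemma~\ref{lem:G13} and the upper bound from \niceness, and then use (C7) (that $y$ is isolated in $G_1^C$) to rule out the equality case~(L1) of Lemma~\ref{lem:G13}. The paper's proof is simply the two-line version of what you have written out in full.
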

\noindent {\em Proof.}\enskip
By (C7) the graph $G_1^C$ has an isolated vertex (namely $y$). Then
(L1) is not possible in  Lemma~\ref{lem:G13}.
\qed

\subsubsection{Case 5:  The $G_4$ edges form a tree in a \nice component}\label{sss:962}

It is, in fact a subcase and a continuation of Case 4. We maintain the assumptions and conclusions of Case~4, and add the assumption that $G_4^C$ is a tree.
Again let $x$ be a degree one vertex of $G_4^C$, and let $y$ be its unique neighbor in $G_4$.

\begin{itemize}
\item[(C8)] The $G_4$ neighborhood of $y$ is $\{ x\} \cup M\cup U$.
\end{itemize}
\noindent
{\em Proof.}\enskip
We already know that the vertices in $\{x\} \cup U$ are in the $G_4$ neighborhood of $y$, and those in $V$ are not. As $K=L= \emptyset$, it remains to show that $M \subseteq N_{G_4}(y)$.

The arguments leading to (C7) can be carried out for any choice of a leaf 
of $G_4^C$. We may assume that all such leaves are incident to $G_{42}$ edges,  
otherwise we are done. Thus, for every leaf of $G_4^C$ its unique neighbor in $G_4^C$ is an isolated  $
G_1^C$ vertex.
If there is only one isolated vertex in $
G_1^C$ (namely $y$), then it must be that vertex, regardless of the choice of the leaf of $G_4^C$. This implies that $G_4^C$ is a star, and it is centered at $y$. In particular, $M \subseteq N_{G_4}(y)$.

Consider the case when $G_1^C$ has at least two isolated vertices, $y$ and $y'$.
The \niceness of $C$ and Lemma~\ref{lem:G13} imply that only the case (L4) is possible, there are no more isolated vertices.
By (C7) each leaf of the tree $G_4^C$ is adjacent to an isolated vertex of $G_1^C$.
So the tree $G_4^C$ consists of two stars with centers $y$ and $y'$ and a path connecting these centers.
In particular, all but one of the $G_4^C$ neighbors of $y$ are leaves. According to (L4) the non-isolated components of $G_1^C$ are balanced complete bipartite graphs, in particular such is $\widetilde C = K(\{x\} \cup M, U)$, so $|U| = |\{x\} \cup M| \ge 2$. All vertices in $U$ are $G_4^C$ neighbors of $y$, so at least one of them, say $u$, is a leaf in $G_4^C$ adjacent to $y$. Now consider any vertex $z \in M$. As $zu \in E(G_1^C)$, $z$ and $u$ must have a common $G_4^C$ neighbor, which can only be $y$. This shows that $z \in N_{G_4}(y)$, completing the proof.
\qed

\begin{itemize}
\item[(C9)] For every pair $ab$ with $a\in \{x\} \cup M$, $b\in U$, the unique triple in $\FF$ containing it is $aby$.
\end{itemize}
\noindent
 Indeed, suppose $abz$ is that unique triple.
Then $az, bz\in E(G_4^C)$, so $z$ is the middle vertex of a path of length $2$ in $G_4^C$.
Since $G_4^C$ is a tree, this path $a$-$z$-$b$ is the unique path connecting $a$ to $b$ in the tree.
But $ay$ and $by$ are tree edges by (C8) so $z= y$.
\qed

\begin{itemize}
\item[(C10)] $|U|\geq 2$.
\end{itemize}
\noindent
Indeed, by  Lemma~\ref{lem:G13} and the \niceness of $C$ we know that $K(\{ x\}\cup M,  U)$ is balanced or almost balanced. We have $(|M|+1) - |U| \leq 1$, so $|U|\leq 1$ gives $|M|\leq 1$.
Writing $M=\{z\}$, $U=\{u\}$, we have $zuy \in \FF$ by (C9). As $zu$ is the only own pair of this triple, there must be another triple $zyv \in \FF$. The only remaining options are $v=x$ or $v \in V$. Both are impossible: the former because $\deg_\FF(xz)=0$, the latter because the pair $yv$, $v \in V$, is an own pair of $xyv$.
\qed

\bigskip

Next, we give an upper bound on the number of triples in $\FF$. 



\begin{itemize}
    \item[-] Triples containing no vertex of $V$:\\
    Since $\{x\} \cup M$ and $U$ are cliques in $G_3$, any triple in $\FF$ can contain at most one vertex from each of them (Lemma~\ref{lem:G3}). Thus triples avoiding $V$ must be of the form $aby$ with $a \in \{x\} \cup M$, $b \in U$. By (C9) all these are indeed triples in $\FF$, so we have $(m+1)s$ of them.
    \item[-] Triples containing exactly one vertex of $V$:
    \\There are $t$ such triples containing $xy$. There are no other such triples. Indeed, any other such triple would have to be of the form $zuv$, $z\in M, u\in U, v\in V$. But then, taking $w\in U\setminus\{u\}$, we get an $\AAA$ configuration: $zuv,zwy,xyv$.
    \item[-] Triples containing exactly two vertices of $V$: \\There are no such triples. Indeed, suppose $zvw$ is such a triple, with $v, w \in V$. Then $z\neq x$, because $vw$ is not an edge of $\FF[x]$. If $z=y$ we get a $\CC$ configuration: $yvw, xyv, xyw$. If $z\in M$ then taking $u\in U$ we get an $\AAA$ configuration: $zvw, zuy, xyv$. Finally, suppose $z\in U$. Since $zy$ is not an own pair of $xyz$, there is another triple $zya\in\FF$ containing it. The vertex $a$ cannot be in $V$ (by the previous paragraph), so we get an $\AAA$ configuration: $zvw, zya, xyv$.
    \item[-] Triples contained in $V$: \\By the induction hypothesis, there are at most $f(t)$ such triples.
\end{itemize}

Altogether, we get that the number of triples in $\FF$ 
 is at most $(m+1)s + t + f(t)$.
Using $(m+1)+s = n-t-1$, $n\geq t+5$, and the notation $\delta(t):=f(t) -\frac{1}{4}(t-1)^2$ with the convention $\delta(0) = -\frac{1}{4}$, we obtain
\begin{equation*}
    \begin{split}
   |\FF|- \frac{1}{4}(n-1)^2 & \leq - \frac{1}{4}(n-1)^2 + \frac{1}{4}(n-t-1)^2 + t+ \frac{1}{4}(t-1)^2 +\delta(t)\\
   &=  \frac{1}{4}(2t^2+4t+1 -2nt)+ \delta(t) \leq  \frac{1}{4}(1-6t)+ \delta(t).
    \end{split}
\end{equation*}
\noindent
Note that $\delta(t) = -\frac{1}{4}$ for even $t$, and for odd $t$ we have $\delta(1)=\delta(3)=0$ and $\delta(t)=1$ when $t \ge 5$. We know that $|\FF| - \frac{1}{4}(n-1)^2 \ge \delta(n) \ge -\frac{1}{4}$, whereas the upper bound $\frac{1}{4}(1-6t)+ \delta(t)$ is at most $-\frac{5}{4}$ for $t \ge 1$. So we must have $t=0$ and $n$ even. In this case, as we have seen, $\FF$ consists of all triples $aby$ with $a \in \{x\} \cup M$, $b \in U$. Thus $\FF$ is the construction based on the Tur\'an graph with the vertex $y$ common to all triples, i.e., $\FF=\FF^*_n$,
completing the proof of $\FF\in \FFF_{(n)}$ in Case~5.
\qed

\subsection{{Proof of Theorem~\ref{thm:AC}}: the end}

Recall that we partitioned $[n]$ into the connected components $C_1, ... , C_r$ of $G_4$ of sizes $p_1, \dots, p_r$, $p_i\geq 2$ for all $i$.
We may also suppose that each pendant $G_4$ edge is a $G_{42}$ edge.

\subsubsection{Case 6: The opposite of Case 5}\label{sss:971}

This is the last case, and we will show that it leads to a contradiction.
We suppose that for each component $C$ of size $p$ either $G_4^C$ is not a tree, i.e., $p-|E(G_4^C)|\leq 0$, call this Case 6/1,
 or it is not near optimal, i.e.,  $ |E(G_1^C)|-\frac{p-2}{2}< |E(G_3^C)|$ (Case 6/2).
Note that in both cases
\begin{equation}\label{eq:98}
    \lceil \frac{p}{2}\rceil - |E(G_4^C)| +|E(G_1^C)|- |E(G_3^C)| \leq 0.
\end{equation}

Indeed, Lemma~\ref{lem:G13} states  $ |E(G_1^C)| - \lfloor \frac{p}{2}\rfloor \leq   |E(G_3^C)|$.
Adding this to $p-|E(G_4^C)|\leq 0$ we obtain~\eqref{eq:98} in Case 6/1. In Case 6/2 we add
 $ |E(G_1^C)|-\frac{p-3}{2}\leq |E(G_3^C)|$ to the obvious $p-1-|E(G_4^C)|\leq 0$ and get
 $\frac{p+1}{2} - |E(G_4^C)| +|E(G_1^C)|- |E(G_3^C)| \leq 0$.

Hence in the sum in the last line of~\eqref{eq:96core} each term $\sigma(C_i)$ is  non-positive. Even more, in Case 6/1 we have $\sigma(C_i)\leq -\frac{1}{2}$
when $p_i$ is odd, and $\sigma(C_i)$ is negative in Case 6/2 as well.
Therefore in Case 6 we get $|\FF|\leq \frac{1}{4}(n^2-2n)\leq f(n)$.
Here equality must hold, so to avoid negative terms each $p_i$ should be even and Case 6/2 is also excluded.

So from now on, we may suppose 6/1 holds with equality in~\eqref{eq:98} for each $i$.
So each $p_i$ is even, $p_i=|E(G_4^{C_i})|$, and equality holds in Lemma~\ref{lem:G13}, i.e., (L1) holds.
Thus $C_i$ contains a perfect matching 
 of $G_1$ edges.
Then Claim~\ref{cl:96} implies that $G_4^{C_i}$ has no pendant edge, each degree is at least $2$.
So $G_4^{C_i}$ should be $2$-regular, connected, i.e., each $G_4^{C_i}$ is an even cycle, $p_i\geq 4$.

We are going to show that each $p_i$ is exactly $4$. Let the vertex set of $C$ be $[p]$ with $G_4^C$ edges
 $12, 23, ..., (p-1)p$, and $p1$.
The $G^C_1$ edges are diagonals of length 2 in $G_4^C$. They also contain a matching of size $\frac{p}{2}$, and this is only possible if there are crossing edges like $13$ and $24$ with
 $123, 234\in \FF_1$.

Suppose that there exists a triple containing $14$, say $14z\in \FF$.
Since $13$ and $24$ are covered exactly once by the triples of $\FF$ we have $z\notin \{ 1,2,3,4\}$.
The pair $12$ is not a $G_{41}$ edge (because $123$ does not have two own pairs) so it is a $G_{42}$ edge, there exists a triple $12w\in \FF$, $w\neq 3$, containing it.
The triples $12w, 14z, 234$ form an $\AAA$ configuration unless $w\in \{ 4, z\}$.
To avoid $\CC$ we have $w\neq 4$, so $w=z$ and $12z\in \FF$.
One can have the same argument starting with the pair $34$ in place of $12$ and obtain $34z\in \FF$. Then we got a five-ring with vertex set $\{ 1,2,3,4,z\}$, a contradiction, because $12$, etc., are $G_4$ edges.

We conclude that the pair $14$ is uncovered.
If it is a $G_3$ edge then there are two $G_1$ pairs $w1$ and $w4$, both diagonals of length $2$. But this is impossible since $p$ is even and the arc $1234$ is of odd length. So $14$ is a $G_{40}$ edge. Thus $1234$ form a $4$-cycle in $G_4$, $p=4$.
This must be true for each component, so $G_4$ is a vertex disjoint union of $4$-cycles such that each contains two triples and a $G_{40}$ pair.
So $G_1$ is a perfect matching and there is no  $G_3$ edge at all.

Consider any $4$-cycle $C$ in $G_4$, say with $123, 234 \in \FF$, $14\in E(G_{40})$ and $12, 23, 34\in E(G_{42})$.
We have a triple $12z\in \FF$ with $z\neq 3$, and (to avoid $\CC$) $z\neq 4$.
Every pair $4z$ with $z\notin C$ is contained in a triple, so we have some triple $4zw\in \FF$.
The triples $12z, 234, 4zw$ form an $\AAA$ configuration unless $w\in \{ 1,2,3\}$. The case $w=1$ is not allowed because $14\in E(G_{40})$, and the case $w=2$
is not allowed because $24\in E(G_1)$. So $w=3$, $34z\in \FF$.
The triples $12z, 34z$ have no $G_1$ pair, hence their pairs $1z, 2z, 3z, 4z$ are $G_2$ pairs. The pair $23$ cannot appear in any triple in $\FF$ except $123, 234$. Indeed, $23z$ is excluded because $2z, 3z$ are own pairs of other triples, and $23w$ for $w \notin C \cup \{z\}$ creates the $\AAA$ configuration $12z, 34z, 23w$. We conclude that the set of triples in $\FF$ that meet $C$ in exactly two vertices is of the form $\{ 12z, 34z: z \in Z_C \}$ where $Z_C$ is a non-empty set disjoint from $C$.

Next, we discuss five-rings. We claim that they must be vertex disjoint. Indeed, any two of them can meet in at most one vertex. If two of them on vertex sets $R_1, R_2$ share a vertex $x$, then in order to avoid an $\AAA$ configuration all $16$ pairs $ab$, $a \in R_1 \setminus \{x\}$, $b \in R_2 \setminus \{x\}$ must be uncovered. However, the only uncovered pairs are $G_{40}$ edges and they are pairwise disjoint.

Now, for any two distinct $4$-cycles $C_i, C_j$ in $G_4$, we write $C_i \to C_j$ if $Z_{C_i} \cap C_j \ne \emptyset$. As shown above, in this case all pairs in $C_i \times (Z_{C_i} \cap C_j)$ are $G_2$ edges. For any $i \ne j$, exactly one of $C_i \to C_j$ and $C_j \to C_i$ holds. Indeed, if both held then pairs in $(Z_{C_j} \cap C_i) \times (Z_{C_i} \cap C_j)$ would be $G_2$ edges contained in two triples in $\FF$. If neither held then all pairs in $C_i \times C_j$ would have to be $G_R$ edges (nothing else is allowed), which is impossible because adjacent $G_R$ edges must come from the same five-ring (as five-rings are disjoint).

We have thus defined a tournament $T$ on $\{C_1,\ldots,C_r\}$. As $Z_{C_i} \ne \emptyset$ for every $i$, every $C_i$ has at least one out-neighbor in $T$, so $T$ cannot be transitive. But then it contains a directed triangle, which gives rise to an $\AAA$ configuration. This contradiction completes the proof.
\qed

\section{Excluding $\AAA$, $\BB$ and $\DD$}
Recall that $\ex(n,\AAA\BB\CC\DD)=\lfloor \frac{1}{8}n^2 \rfloor$ for all $n\geq 1$ and in Theorem~\ref{thm:all} in Section~\ref{sec:all} we have described $\FF_{n,k}$, the extremal families.
In this section we show that for large enough $n$, excluding $\AAA$, $\BB$ and $\DD$ has the same effect on the size of a triple system as excluding all triangles.
For finitely many cases $\ex(n,\AAA\BB\DD)$ can exceed $\lfloor \frac{1}{8}n^2 \rfloor$, e.g., for $n=4$
 the complete triple system $K^3_4:= \binom{[4]}{3}$ is $\AAA\BB\DD$-free, while $\lfloor \frac{1}{8}n^2 \rfloor=2$.
For $n=7$ two $K^3_4$ sharing a vertex give $\ex(7,\AAA\BB\DD)\geq 8$ (and here equality holds)
while $\lfloor \frac{1}{8}n^2 \rfloor=6$.
For $n=16$ the family of $8$ $K_4^3$ occupying the rows and columns of a $4\times 4$ square lattice is $\AAA\BB\DD$-free and has a completely different structure than  $\FF_{n,k}$. Denote this triple system by $\QQ^{4\times 4}$, and note that $|\QQ^{4 \times 4}| = \lfloor \frac{1}{8}n^2 \rfloor = 32$.

\begin{thm} \label{thm:ABD}
\[ \ex(n,\AAA\BB\DD) = \lfloor \frac{1}{8}n^2 \rfloor \, \text{  for } n \ge 8. \]
Moreover, for $n\geq 11$  the families  $\FF_{n,k}$ described in Section~\ref{sec:all} are the only extremal systems, except that
 $\QQ^{4\times 4}$ is also extremal in the case $n=16$.
\end{thm}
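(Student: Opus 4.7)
My plan is to prove the upper bound $\ex(n,\AAA\BB\DD) \le \lfloor n^2/8 \rfloor$ for $n \ge 8$ by induction on $n$, verifying the base cases $n \in \{8,9,10\}$ by direct case analysis and handling larger $n$ via a structural argument. The inductive step will rely on the decomposition from Observation~\ref{obs:CC}: every $\DD$-free triple system splits under contiguity into components, each a cluster (a $\CC$ or $K_4^{(3)}$ on $4$ vertices) or a $k$-crown.

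The first move is to dispense with the case of no clusters. If all components are crowns, then $\FF$ is automatically $\CC$-free: three triples forming $\CC$ would need to pairwise share $2$ vertices, forcing them into a single component, but inside a crown all triples share the same pair, whereas $\CC$ requires three distinct shared pairs. Hence $\FF$ is $\AAA\BB\CC\DD$-free, and Theorem~\ref{thm:all} directly supplies the bound $|\FF| \le \lfloor n^2/8 \rfloor$ together with the characterization $\FF = \FF_{n,k}$ in case of equality.

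Assume now that $\FF$ contains a cluster $\mathcal{K}$ on a $4$-set $S$, and set $V' = V \setminus S$. I will establish the following structural facts in quick succession: (i) no triple in $\FF \setminus \mathcal{K}$ meets $S$ in two or more vertices (else it shares a pair with a cluster triple and joins $\mathcal{K}$); (ii) since both $\CC$ and $K_4^{(3)}$ cover all six pairs of $S$ by cluster triples, $\BB$-freeness forbids $\{u,x,y\}, \{v,x,y\} \in \FF$ for any distinct $u,v \in S$ and $x,y \in V'$ (a cluster triple through $u,v$ completes the $\BB$); (iii) similarly, $\AAA$-freeness forbids $\{u,w,y_1\}, \{v,w,y_2\} \in \FF$ with $y_1 \ne y_2$ (a cluster triple through $u,v$ completes an $\AAA$ on the six vertices $\{u,v,w,y_1,y_2,z\}$ where $z$ is the third cluster-triple vertex). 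Together (ii) and (iii) force the non-isolated vertex sets of the restricted links $\mathrm{link}(v)|_{V'}$, $v \in S$, to be pairwise disjoint subsets of $V'$. By $\DD$-freeness each restricted link is a disjoint union of triangles and stars; if $e_v$ is the number of edges and $s'_v$ the number of star components with at least one leaf, then the non-isolated vertex set has exactly $e_v + s'_v$ elements. The disjointness inequality becomes
\[ F_e + \sum_{v \in S} s'_v \le n - 4, \]
where $F_e := \sum_{v \in S} e_v$ is the total number of external triples (those in $\FF \setminus \mathcal{K}$ meeting $S$). When $\sum s'_v \ge 2$ this gives $F_e \le n - 6$; combined with $|\mathcal{K}| \le 4$, the inductive bound $\lfloor (n-4)^2/8 \rfloor$ on triples inside $V'$, and the arithmetic identity $\lfloor n^2/8 \rfloor - \lfloor (n-4)^2/8 \rfloor = n - 2$, one obtains $|\FF| \le \lfloor n^2/8 \rfloor$.

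The subtle case is $\sum s'_v \le 1$: the non-star contribution to $F_e$ then comes entirely from triangles in the restricted links, and each such triangle is itself a $\CC$-cluster through the corresponding $v$. So $\FF$ contains a chain of clusters, each pair of which can share at most one vertex. A separate cluster-graph analysis is needed here: either the chain can be pruned to reduce to a previously treated configuration, or it saturates, which occurs only when $n = 16$ and gives the sporadic extremal family $\QQ^{4 \times 4}$ (a grid of eight $K_4^{(3)}$ clusters, two through each vertex). Tracking equality through the whole induction then shows that, for $n \ge 11$, the only extremal triple systems are $\FF_{n,k}$ and (when $n = 16$) $\QQ^{4\times 4}$. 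The main obstacle is precisely this cluster-chain analysis: triangles in the restricted links spawn new clusters that must be accounted for without double-counting, and one must show that the compensating loss in the inductive bound on $V'$ coming from the footprints of those new clusters exactly offsets the extra external triples, with equality occurring only in the $\QQ^{4 \times 4}$ configuration.
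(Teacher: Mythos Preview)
Your approach differs substantially from the paper's. The paper does not induct on $n$; instead it encodes all clusters at once into a \emph{$Q4$-family} $\HH = \QQ \cup \TT \cup \EE$ (the cluster vertex-sets, the triples with three own pairs, and two designated own pairs from each remaining triple), proves the global weighted inequality $8|\QQ| + \tfrac{7}{2}|\TT| + |\EE| < 2\lfloor n^2/8\rfloor$ whenever $\QQ \ne \emptyset$ and $n \ge 15$ (with $\QQ^{4\times4}$ as the unique exception), and then disposes of $8 \le n \le 14$ by enumerating the sixteen possible cluster configurations. This global weighting handles all clusters simultaneously, which is exactly what your inductive scheme cannot do cleanly.

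Your structural observations (i)--(iii) and the disjointness of the non-isolated parts of the restricted links are correct and useful. But two concrete gaps remain.

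First, the induction is not well-founded with base $\{8,9,10\}$: for $n=11$ your step lands on $n-4=7$, where $\ex(7,\AAA\BB\DD)=8>\lfloor 49/8\rfloor=6$ (two $K_4^{(3)}$'s sharing a vertex), so the bound you invoke on $V'$ is false. You would need $n=11$ (and in practice all of $8\le n\le 14$) as base cases --- which is precisely the finite check the paper performs.

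Second, and more seriously, the case $\sum_v s'_v \le 1$ is not handled, as you acknowledge. Your ``compensating loss'' heuristic is correct for $\QQ^{4\times4}$ (removing a row leaves $16$ triples on $12$ vertices, exactly $2$ below $\lfloor 144/8\rfloor$), but turning it into a proof requires showing that clusters inside $V'$ force a strict deficit there, and this is \emph{not} a consequence of the inductive hypothesis --- it is essentially the theorem itself, applied to $V'$ with extra information about its cluster structure. The paper's $Q4$-family lemma is precisely the device that makes this quantitative, via a single weighted edge-count over all clusters at once. Without either such a lemma or an exhaustive cluster-configuration analysis, I do not see how your inductive step closes; the ``separate cluster-graph analysis'' you defer to is the entire difficulty.
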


For $n=8,9,10$ here is the complete list of extremal families.\\
\noindent $n=8$: $\FF_{8,2}= \{ 12x: 5\leq x \leq 8\} \cup \{ 34x: 5\leq x \leq 8\}$, \\
\noindent ${}\quad\quad$\quad  $\QQ_8^{2}=$ two $K^3_4$ sharing a vertex (this triple system has an isolated vertex),\\
\noindent ${}\quad\quad$\quad  $\QQ_8^{3}=$ two vertex disjoint $K^3_4$.\\
\noindent $n=9$: $\FF_{9,2}= \{ 12x: 5\leq x \leq 9\} \cup \{ 34x: 5\leq x \leq 9\}$, \\
\noindent${}\quad\quad$\quad  $\QQ_9^{2}=$ two $K^3_4$ on 1234 and 1567 with two additional triples $289$, $589$.\\
\noindent $n=10$: $\FF_{10,2}= \{ 12x: 5\leq x \leq 10\} \cup \{ 34x: 5\leq x \leq 10\}$, \\
\noindent ${}\quad\quad$\quad  $\FF_{10,3}= \{ 12x: 7\leq x \leq 10\} \cup \{ 34x: 7\leq x \leq 10\} \cup \{ 56x: 7\leq x \leq 10\}$, \\
\noindent ${}\quad\quad$\quad  $\QQ_{10}^{3}=$ three $K^3_4$ sharing a vertex,\\
\noindent ${}\quad\quad$\quad  $\QQ_{10}^{4}=$ three $K^3_4$ forming a path, $\{ 1234, 4567, 789X\}$.


\subsection{Non-uniform triangle-free hypergraphs}\label{ss:101}

Gy\H{o}ri~\cite{GyE} proved 
his upper bound 
in the following more general form.
Let $\HH$ be a triangle-free multi-hypergraph on $n \ge 100$ vertices. Then
\begin{equation} \label{eqn:Gy}
\sum_{E \in \HH} \bigl( |E| - 2 \bigr) \le \lfloor \frac{1}{8}n^2 \rfloor.
\end{equation}
In this formulation, $\HH$ may contain edges of different sizes, and may contain multiple copies of the same set of vertices. Such multiple copies are considered distinct for the purpose of forming a triangle, and are counted with multiplicity in the summation in~(\ref{eqn:Gy}).

Given an $\AAA\BB\DD$-free triple system $\FF$ one can prove $|\FF|\leq \lfloor \frac{1}{8}n^2 \rfloor$ for $n \ge 100$ using~\eqref{eqn:Gy} as follows. Let $\QQ:=\{ Q_1,\ldots,Q_q\}$ be all the vertex sets of $\CC$ configurations in $\FF$, i.e., $|Q_i|=4$ and at least three triples within $Q_i$ are in $\FF$. Note that distinct $Q_i$ can share at most one vertex, otherwise we get a $\DD$ configuration. Let $\HH$ be the multi-hypergraph having the following edges:
 two copies of each $Q_i$, $i=1,\ldots,q$ and  one copy of each $F \in \FF$ such that $F \not\subseteq Q_i$, $i=1,\ldots,q$.
Note that
\begin{equation} \label{eqn:101}
  |\FF| \leq 4|\QQ| + |\FF \setminus (\partial\QQ)| =  \sum_{E \in \HH} \bigl( |E| - 2 \bigr).
\end{equation}
Indeed, the inequality holds because each $Q_i$ contains at most $4$ triples in $\FF$. The equality holds because each $Q_i$ contributes $\bigl( 4-2 \bigr) + \bigl( 4-2 \bigr) = 4$ to the sum, and every other triple in $\FF$ contributes $3-2=1$. Upon checking that $\HH$ is triangle-free, (\ref{eqn:Gy}) implies  the desired bound $\lfloor \frac{1}{8}n^2 \rfloor$ for the right-hand side of~\eqref{eqn:101}.

The above proof does not yield the extremal systems and only works for $n\geq 100$.

\begin{defi}\label{def:Q4}
A (simple) hypergraph $\HH$ is called a $Q4$-{\em family} on $[n]$ if there is a partition $\HH=\QQ\cup \TT\cup\EE$ such that
\begin{itemize}
  \item[--]  $\QQ\subseteq {[n]\choose 4}$, $\TT\subseteq {[n]\choose 3}$, $\EE\subseteq {[n]\choose 2}$,
  \item[--] $\HH$ is triangle-free, and
  \item[--] $|H\cap H'|\leq 1$ for any two distinct members $H,H'\in \HH$.
\end{itemize}
\end{defi}


\begin{lem} \label{lem:21}
Let $\HH$ be a $Q4$-{\em family} on $[n]$ with $\QQ\neq \emptyset$ and suppose $n\geq 15$.  Then
\begin{equation} \label{eq:21}
  8|\QQ| + \frac{7}{2}|\TT|+|\EE|< 2\lfloor \frac{1}{8}n^2 \rfloor
\end{equation}
except in the case when $\TT = \EE = \emptyset$ and $\partial\QQ$ is the triple system $\QQ^{4 \times 4}$.
\end{lem}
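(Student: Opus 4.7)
The plan is to apply Győri's inequality~\eqref{eqn:Gy} to a triangle-free multi-hypergraph $\HH^*$ built from $\HH$. I would first consider the doubled multi-hypergraph $2\HH$ (each edge of $\HH$ taken with multiplicity two) and verify that it is triangle-free: a triangle using three distinct $\HH$-edges would be a triangle of $\HH$, contradicting the hypothesis; and a triangle using two copies of one $E\in\HH$ together with another $E'\in\HH$ would force two triangle vertices into $E\cap E'$, contradicting $|E\cap E'|\le 1$. Applying~\eqref{eqn:Gy} to $2\HH$ yields $4|\QQ|+2|\TT|\le \lfloor n^2/8\rfloor$, equivalently $8|\QQ|+4|\TT|\le 2\lfloor n^2/8\rfloor$.

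To bring the pair-edges into the Győri count I would promote each $E=\{a,b\}\in\EE$, in one of its two copies inside $2\HH$, to a triple $E\cup\{v_E\}$ for a suitable $v_E\in[n]\setminus\{a,b\}$. A case analysis of the possible triangles involving the new triple shows that it suffices to choose $v_E$ that is non-adjacent in the shadow $\partial\HH$ to both $a$ and $b$, and that has no common $\partial\HH$-neighbor with $a$ or with $b$ apart from the trivial ones. Under the hypothesis $n\ge 15$ and the sparsity enforced by $|H\cap H'|\le 1$, enough candidate vertices remain to carry out these promotions one at a time (greedily), keeping the resulting multi-hypergraph $\HH^*$ triangle-free throughout. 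Applying~\eqref{eqn:Gy} to $\HH^*$ then yields $4|\QQ|+2|\TT|+|\EE|\le \lfloor n^2/8\rfloor$, that is, $8|\QQ|+4|\TT|+2|\EE|\le 2\lfloor n^2/8\rfloor$. Since the left-hand side dominates $8|\QQ|+\tfrac72|\TT|+|\EE|$, the lemma's inequality follows, strictly unless $\tfrac12|\TT|+|\EE|=0$ and the Győri bound for $\HH^*$ is attained.

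For the equality case, these two conditions together force $|\TT|=|\EE|=0$ (so no promotions take place and $\HH^*=2\QQ$) and $4|\QQ|=\lfloor n^2/8\rfloor$. I would then analyze the extremal triangle-free multi-hypergraphs consisting only of $4$-sets with pairwise intersection at most $1$ via the dual graph whose vertices are the elements of $\QQ$ and whose edges record shared vertices; this dual graph must be $4$-regular and graph-triangle-free on $|\QQ|$ vertices, which forces $n=16$, $|\QQ|=8$ and $\partial\QQ=\QQ^{4\times 4}$. The main obstacle is the promotion step when $\EE$-pairs cluster densely around few vertices: selecting pairwise-compatible $v_E$'s that avoid triangles both with existing $\HH$-edges and with other promoted triples requires a careful greedy argument supported by a vertex-availability count. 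A secondary obstacle is the multi-hypergraph analog of the extremal characterization in Theorem~\ref{thm:all}, which may require an adaptation of the argument given there.
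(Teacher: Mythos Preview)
Your approach has a fatal gap: Gy\H{o}ri's inequality~\eqref{eqn:Gy} is only stated (and proved) for $n\ge 100$, whereas Lemma~\ref{lem:21} must hold for all $n\ge 15$. Indeed, the paper introduces this lemma precisely because the range $15\le n<100$ is not covered by~\eqref{eqn:Gy}; once the lemma is established, only the finitely many cases $8\le n\le 14$ remain to be checked by hand. By deducing the lemma from~\eqref{eqn:Gy} you have assumed away the very range that the lemma is designed to handle. The paper's proof avoids this circularity: it is a self-contained weighting argument (first bounding $\tfrac72|\TT|+|\EE|$ via a maximal packing of disjoint triples, then bounding $\sigma(\HH)$ via a maximal packing of disjoint quadruples together with the previous step) that is valid from $n=15$ onwards.

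There is also a secondary problem with the promotion step. Your claim that a suitable $v_E$ always exists because of ``the sparsity enforced by $|H\cap H'|\le 1$'' is not justified: that condition does not prevent the shadow $\partial\HH$ from being dense. Concretely, take $\QQ=\{1234\}$ and let $\EE$ be the complete bipartite graph on a balanced bipartition of $[n]\setminus\{1,2,3,4\}$. Then every vertex outside $\{1,2,3,4\}$ is a $\partial\HH$-neighbor of one endpoint of every $E\in\EE$, so all promotions must use $v_E\in\{1,2,3,4\}$; but two promoted triples $\{a,b,1\}$ and $\{a',b',1\}$ with $a,a'$ in one class and $b,b'$ in the other form a triangle together with the pair $\{a,b'\}\in\EE$. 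Thus the greedy promotion fails, and no vertex-availability count of the kind you sketch can rescue it in this regime. Finally, your equality-case claim that the ``dual graph'' on $\QQ$ must be $4$-regular is unsupported; you would also need an extremal characterization for~\eqref{eqn:Gy} in the multi-hypergraph setting, which is not provided.
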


This is a simplified version of an inequality from~\cite{GyE} where Gy\H ori considered $8|\QQ|+4|\TT|+|\EE|$.
Although~\eqref{eq:21} does not seem to imply~\eqref{eqn:Gy}, and it does not apply to multi-hypergraphs, its proof is simpler and valid for $15 \leq n < 100$, too.

\begin{proof}
Let $\sigma(\HH):=  8|\QQ| + \frac{7}{2}|\TT|+|\EE|$,
and
  $\sigma_3(n):= \max \{\sigma(\HH): \HH$  is a $Q4$-family on $[n]$ with $\QQ=\emptyset\}$.
Since a complete bipartite graph can be considered as a $Q4$-family we have $
\sigma_3(n)\geq
  \lfloor \frac{1}{4}n^2 \rfloor$.
As a first step we prove that for all $n\geq 0$
\begin{equation} \label{eq:22}
   \frac{7}{2}|\TT|+|\EE| \leq   \frac{1}{4}(n^2+5).
\end{equation}
\noindent
Indeed, $\sigma_3(0)=\sigma_3(1)=0$, $\sigma_3(2)=1=\frac{1}{4}2^2$,  $\sigma_3(3)=\frac{7}{2}=\frac{1}{4}(3^2+5)$. By inspection we can see that
 $\sigma_3(n)$ is maximized for $n=4,5,6$  when $\HH$ consists of a single
 triple and an edge, two triples sharing a vertex, or two disjoint triples joined by three disjoint pairs.
This gives $\sigma_3(4)=\frac{9}{2}=\frac{1}{4}(4^2+2)$,  $\sigma_3(5)=7=\frac{1}{4}(5^2+3)$, and  $\sigma_3(6)=10=\frac{1}{4}(6^2+4)$.

For $n\geq 7$ let $T_1, \dots, T_\ell$ be a maximal family of pairwise disjoint triples from $\TT$, let $L= \cup T_i$, $|L|=3\ell$, and $\overline{L}:= [n]\setminus L$. The case $\ell=0$ is obvious, so we may suppose that $\ell\geq 1$.
Make a tally of the pairs in $G:=(\partial\TT) \cup \EE$.
Every vertex $x\in [n]\setminus T_i$ has at most one $G$-neighbor in $T_i$, so $G$ can have at most
$3\ell + 3{\ell \choose 2}$ edges in $L$, $\ell(n-3\ell)$ edges joining $L$ and $\overline{L}$. Since $G$ is triangle-free on $\overline{L}$ it can have at most $\frac{1}{4}(n-3\ell)^2$ edges in it.
Define weights $w(e)$ on the edges of $G$ as follows. The weight $w(e) = \frac{7}{6}$ if $e$ lies in $L$,
 $w(e)=\frac{5}{4}$ if $e$ joins  $L$ and $\overline{L}$, and finally $w(e)=1$ for edges in $\overline{L}$.
The sum of the weights of the three pairs in $T\in \TT$ is $3\times \frac{7}{6}$ if $T\subseteq L$,
 it is $\frac{7}{6}+2\times \frac{5}{4}$ if $|T\cap L|=2$, it is
 $2\times \frac{5}{4} +1$ if $|T\cap L|=1$, and there is no $T\in \TT$ such that $|T\cap L|=0$.
So every triangle $T$ in $G$ belongs to $\TT$ and receives at least $\frac{7}{2}$ total weight.
Hence
  \begin{equation}\label{eq:222}
    \begin{split}
     \frac{7}{2}|\TT|+|\EE| & \leq \frac{7}{6}\left(3\ell + 3{\ell \choose 2}\right)+
 \frac{5}{4}\ell(n-3\ell)+ \frac{1}{4}(n-3\ell)^2  \\
    & =  \frac{1}{4}\left( n^2 + \ell^2 -n\ell +7\ell \right) .
    \end{split}
    \end{equation}
The  polynomial $p_n(\ell):= \ell^2 -n\ell +7\ell $ attains its maximum over $0\leq \ell \leq \lfloor \frac{n}{3}\rfloor$ at $\ell=0$ or $\ell=\lfloor \frac{n}{3}\rfloor $, so it is at most $4$ for $n=7$, at most $2$ for $n=8$, at most $3$ for $n=9$, and at most $0$ for $n\geq 10$. This completes the proof of inequality~(\ref{eq:22}).

\vspace{10pt}

Next, consider the case $|\QQ|\geq 1$ and let $Q_1, \dots, Q_k$ be a maximal family of pairwise disjoint quadruples from $\QQ$, let $K= \cup Q_i$, $|K|=4k$, and $\overline{K}:= [n]\setminus K$.
Define the hypergraph $\HH_3$ on the vertex set $\overline K$ as
\begin{itemize}
  \item[--]  the triples  of $\TT$ and the pairs of $\EE$ contained in $\overline K$, and
  \item[--]  the triples of the form $Q\cap\overline K $ when $Q\in \QQ$, $|Q\cap K|=1$, and
  \item[--]  the pairs of the form $Q\cap\overline K $ when $Q\in \QQ$, $|Q\cap K|=2$, and
  \item[--]  the pairs of the form $T\cap\overline K $ when $T\in \TT$, $|T\cap K|=1$.
\end{itemize}
This is a $Q4$-family without any quadruple, so~\eqref{eq:22} gives
$$
  \sigma(\HH_3)\leq \frac{1}{4}((n-4k)^2+5).
   $$
Let $\partial_2 \HH$ be the set of pairs that are contained in some edge of $\HH$.
Define the graph $G$ as the set of pairs in $\partial_2 \HH$ meeting $K$.
As before,   $G$ can have at most
$6k + 4{k \choose 2}$ edges in $K$ and $k(n-4k)$ edges joining $K$ and $\overline{K}$.
Define weights $w(e)$ on the edges of $G$ as follows. The weight $w(e) = \frac{4}{3}$ if $e$ lies in $K$,
 $w(e)=\frac{3}{2}$ if $e$ joins  $K$ to $\overline{K}$.
Define weights on the members of $\HH_3$ as needed,
$w(S)=\frac{7}{2}$ for triples in $\HH_3$  and $w(e)=1$ for pairs in $\HH_3$.

It is easy to check that the total weight in each member of $Q\in \QQ$ is at least 8,
  it is at least $\frac{7}{2}$ for the members of $\TT$, and it is at least 1 for pairs in $\EE$. Indeed,
if $Q\subseteq K$ then it gets $6\times \frac{4}{3}=8$, if $|Q\cap K|=3$ then it gets $3\times \frac{4}{3}+3\times \frac{3}{2}$,
 if $|Q\cap K|=2$ then it gets $\frac{4}{3}+4\times \frac{3}{2}+1$, and if  $|Q\cap K|=1$ then it gets $3\times \frac{3}{2}+\frac{7}{2}$.
If  $T\in \TT$ and $T\subseteq K$ then it gets $3\times \frac{4}{3}$, if $|T\cap K|=2$ then it gets $\frac{4}{3}+2\times \frac{3}{2}$,  if $|T\cap K|=1$ then it gets $2\times \frac{3}{2}+1$, and if $T\cap K=\emptyset$ then
its weight is $\frac{7}{2}$.
Finally, the weight of the (relevant) pairs is at least $1$.
Hence
  \begin{eqnarray}
   8|\QQ| + \frac{7}{2}|\TT|+|\EE| & \leq & \sum_{e \in G} w(e) + \sigma_3(n-4k) \label{eq:223}\\
    & \leq &
      \frac{4}{3}\left(6k + 4{k\choose 2}\right)+
 \frac{3}{2}k(n-4k)+ \frac{1}{4}\left((n-4k)^2 +5\right)\label{eq:224} \\
    & = & \frac{1}{4}n^2 + \frac{1}{12}\left( 8k^2 -6k n +64k +{15} \right) .\notag
    \end{eqnarray}
Here $2k(4k-n + 32-2n)+{15}< -12$ for $n\geq 18$ (and $1\leq k\leq \lfloor \frac{n}{4} \rfloor$),
 it is less than $-3$ for $n=17$,  and it is negative for $n=16$ and $k=1,2,3$.

For the case $n=16$ and $k=4$ we have $K=V(G)$ and $G$ has $24$ edges in $Q_1, \dots, Q_4$ and at most $4\times{4\choose 2}=24$ further edges.
Recall that the weight of each edge is $\frac{4}{3}$. So every $Q\in \QQ$ gets weight 8, $T\in \TT$ gets weight 4, and a pair $E\in \EE$ has weight $\frac{4}{3}$. So $\sigma (\HH)\leq 48\times \frac{4}{3}=64$, as claimed.
Here equality can hold only if $\TT$ and $\EE$ are both empty, and $|\QQ|=8$. Then $\QQ$ must have the lattice structure.

Finally, in the case $n=15$ we use~\eqref{eq:223} directly.
By~\eqref{eq:222} we have $\sigma_3(15-4)\leq \frac{1}{4}11^2$ so in the case $(n,k)=(15,1)$
 instead of~\eqref{eq:224} we have $\sigma(\HH) \leq \frac{1}{4}n^2 + \frac{1}{12}\left( 8k^2 -6k n +64k \right)$ which is less than $2\lfloor \frac{1}{8}n^2 \rfloor$.
Similarly, in the case $(n,k)=(15,2)$ using $\sigma_3(15-2\times 4)\leq \frac{1}{4}(7^2+4)$ leads to
 $\sigma(\HH) < 2\lfloor \frac{1}{8}n^2 \rfloor$.

In the case $(n,k)=(15,3)$ we define another weight function on the edges of $\partial_2 \HH$. We assign weight $\frac{4}{3}$ to each pair.
With this weighting every member of $\QQ$ receives a total weight $8$, a triple in $\TT$ gets $4$ and a pair from $\EE$ gets $\frac{4}{3}$.
The graph $\partial_2 \HH$ has $18$ edges covered by $Q_1, Q_2$, and $Q_3$, there are at most $12$ edges between these three quadruples, there are at most
 $3\times 3$ edges from $\overline K$ to $K$, and at most $\binom{|{\overline K}|}{2}=3$ edges in $\overline K$. Hence $|\partial_2 \HH|\leq 42$ and
 $\sigma(\HH)\leq 56 = 2\lfloor \frac{1}{8}n^2 \rfloor$. However, here equality can hold only if all the $42$ pairs of  $\partial_2 \HH$ belong to members of $\QQ$. But it is not possible to add to $Q_1, Q_2, Q_3$ four more quadruples that form with them a $Q4$-family.
This completes the proof of the lemma. \end{proof}


\subsection{Proof of Theorem~\ref{thm:ABD}}

Let $\FF$ be an $\AAA\BB\DD$-free triple system on $[n]$ with $|\FF|\geq \lfloor \frac{1}{8}n^2 \rfloor$, $n\geq 8$.
Our aim is to prove that  $|\FF|=\lfloor \frac{1}{8}n^2 \rfloor$ and $\FF$ is among the families prescribed earlier.
We start as in the beginning of Subsection~\ref{ss:101}.

Let $\QQ:=\{ Q_1,\ldots,Q_q\}$ be all the vertex sets of $\CC$ configurations in $\FF$, i.e., $|Q_i|=4$ and at least three triples within $Q_i$ are in $\FF$.
Define $\FF_3$ as the set of triples from $\FF$ having three own pairs, and let $\FF_2:= \FF \setminus (\partial \QQ \cup \FF_3)$.
Each member of $\FF_2$ has exactly two own pairs, the set of these are denoted by $\EE$.
Then the family $\HH= \QQ \cup \FF_3 \cup \EE$ is a $Q4$-family.

If $\QQ=\emptyset$ then $\FF$ is $\AAA\BB\CC\DD$-free and Theorem~\ref{thm:all} can be applied. So from now on, we may suppose that $q\geq 1$.
In case of $n\geq 15$ we can apply Lemma~\ref{lem:21}.
We get
$$
  2|\FF|\leq 8|\QQ|+ 2|\FF_3|+ |\EE| \leq \sigma(\HH)< 2\lfloor  \frac{1}{8}n^2 \rfloor$$
 unless $\FF=\QQ^{4\times 4}$.
So the proof is complete for $n\geq 15$.

From now on we may suppose that $n\leq 14$ (and $q\geq 1$).
These are finitely many cases so one can check them with a computer.
For completeness we sketch a case by case check in the next subsection.
\qed

\subsubsection{The case $8\leq n\leq 14$}
Starting with $Q_1=1234$ and adding new quadruples one by one it can be seen that there are 16 possibilities for $\QQ$.
The list is presented below (also see Figure~\ref{quadtypes}). $\kappa$ denotes $|\cup \QQ|$, $U:=\cup \QQ$, and $i'$ denotes the element $10+i$.

\begin{figure}
\centering
\includegraphics[width=1\textwidth]{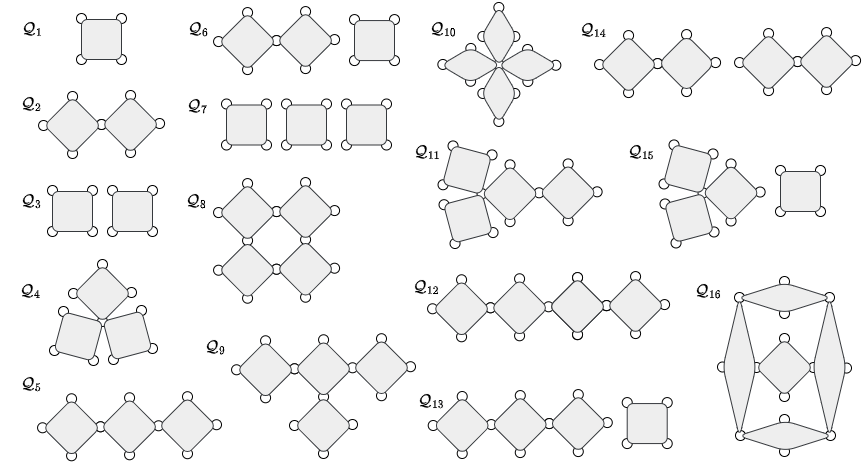}
\caption{\label{quadtypes}The 16 possibilities for $\QQ$ on 14 vertices}
\end{figure}

\begin{enumerate}
\item $q=1$, $\kappa=4$, $\QQ_1=\{ 1234\}$
\item $q=2$, $\kappa=7$, $\QQ_2=\{ 1234, 1567\}$
\item $q=2$, $\kappa=8$, $\QQ_3=\{ 1234, 5678\}$
\item $q=3$, $\kappa=10$, $\QQ_4=\{ 1234, 1567, 1890'\}$
\item $q=3$, $\kappa=10$, $\QQ_5=\{ 1234, 4567, 7890'\}$
\item $q=3$, $\kappa=11$, $\QQ_6=\{ 1234, 1567, 890'1'\}$
\item $q=3$, $\kappa=12$, $\QQ_7=\{ 1234, 5678, 90'1'2'\}$
\item $q=4$, $\kappa=12$, $\QQ_8=\{ 1234, 5678, 1590', 261'2'\}$
\item $q=4$, $\kappa=13$, $\QQ_9=\{ 1234, 5678, 90'1'2', 1593'\}$
\item $q=4$, $\kappa=13$, $\QQ_{10}=\{ 1234, 1567, 1890', 11'2'3'\}$
\item $q=4$, $\kappa=13$, $\QQ_{11}=\{ 1234, 1567, 1890', 81'2'3'\}$
\item $q=4$, $\kappa=13$, $\QQ_{12}=\{ 1234, 4567, 7890', 0'1'2'3'\}$
\item $q=4$, $\kappa=14$, $\QQ_{13}=\{ 1234, 5678, 890'1', 1'2'3'4'\}$
\item $q=4$, $\kappa=14$, $\QQ_{14}=\{ 1234, 1567, 890'1', 82'3'4'\}$
\item $q=4$, $\kappa=14$, $\QQ_{15}=\{ 1234, 5678, 590'1', 52'3'4'\}$
\item $q=5$, $\kappa=14$, $\QQ_{16}=\{ 1234, 5678, 90'1'2', 1593',260'4'\}$.
\end{enumerate}

In all cases we define a graph $G$ by selecting two own pairs from each triple in $\FF \setminus (\partial\QQ)$.
This $G$ is triangle-free.
Then we find an upper bound $\alpha$ for the possible number of edges of $G$ in $U$, a bound $\beta$
 for the number of $G$-edges joining $U$ to $[n]\setminus U$, and a bound $\gamma$ for the number of $G$-edges in $[n]\setminus U$.
In particular, we have $\gamma \leq \lfloor \frac{1}{4}(n-\kappa)^2 \rfloor$.
Then
\begin{equation}\label{eq10.6}
   |\FF| \leq 4|\QQ|+ \frac{1}{2}|E(G)|\leq 4q + \lfloor \frac{1}{2}(\alpha + \beta+ \gamma)\rfloor.
  \end{equation}
In almost all cases~\eqref{eq10.6} immediately gives $|\FF|< \lfloor  \frac{1}{8}n^2 \rfloor $.

\noindent
--- Case of $\QQ_1$. \enskip
$\alpha =0$, $\beta\leq n-4$, $\gamma\leq \frac{1}{4}(n-4)^2$, so
$|\FF|\leq 4+ \frac{1}{2}(\alpha + \beta+ \gamma)\leq \frac{1}{8}(n^2-4n+32)$. This is less than
$\lfloor  \frac{1}{8}n^2 \rfloor $ for $n\geq 9$.
In case of $n=8$ equality must hold, $G$ is a $4$-cycle on $[8]\setminus Q_1$ plus four edges to $Q_1$. But this cannot be realized by triples.

\noindent
--- Case of $\QQ_2$. \enskip
$\alpha =0$, $\beta\leq 2(n-7)$, $\gamma\leq \lfloor \frac{1}{4}(n-7)^2\rfloor$.  So
$|\FF|< \lfloor \frac{1}{8}n^2 \rfloor $ for $n\geq 11$ by~\eqref{eq10.6}.
In case of $n=10$ we have that $\gamma=2$ and $\beta=6$, but this cannot be realized.
In case of $n=9$ we obtain $\QQ_9^2$.
The case of $n=8$ is obvious, $\FF=\partial \QQ$ ($= \QQ_8^2$).

\noindent
--- Case of $\QQ_3$. \enskip
For $n\geq 11$ we have $\alpha\leq 4$, $\beta\leq 2(n-8)$, so $|\FF|\leq \frac{1}{8}(n^2-8n+80)< \lfloor \frac{1}{8}n^2 \rfloor$.
In case of $n=8$ we have $\FF=\partial \QQ$ ($= \QQ_8^3$).
If $n=9$ then only one more triple can be added to $\QQ_8^3$, and in case of $n=10$ at most two more can be added. These are too few, so we are done.

\noindent
--- Case of $\QQ_4$. \enskip
Here $\alpha=0$, $\beta\leq 3(n-10)$ so~\eqref{eq10.6} gives $|\FF|< \lfloor \frac{1}{8}n^2 \rfloor$ for $n\geq 11$.
In case of $n=10$ we have $\FF=\partial \QQ$ ($= \QQ_{10}^3$).

\noindent
--- Case of $\QQ_5$. \enskip
Here $\alpha=3$, $\beta\leq 3(n-10)$ so~\eqref{eq10.6} gives $|\FF|
 \leq \frac{1}{8}(n^2-8n+88)< \lfloor \frac{1}{8}n^2 \rfloor$ for $n\geq 12$.
In case of $n=10$ we have $\FF=\partial \QQ$ ($= \QQ_{10}^4$).
In case of $n=11$ one can add at most one new triple to  $\QQ_5$, so we have $|\FF|\leq 13$.

\noindent
--- Case of $\QQ_6$. \enskip
Here $\alpha=7$, $\beta\leq 3(n-11)$ so~\eqref{eq10.6} gives $|\FF|
\leq \frac{1}{8}(n^2-10n+113) < \lfloor \frac{1}{8}n^2 \rfloor$ for $n\geq 12$.
In case of $n=11$ we have $\FF=\partial \QQ$ so $|\FF|\leq 12$.

\noindent
--- Case of $\QQ_7$. \enskip
Here $\alpha=12$, $\beta\leq 3(n-12)$ so~\eqref{eq10.6} gives $|\FF|
 \leq \frac{1}{8}(n^2-12n+144)< \lfloor \frac{1}{8}n^2 \rfloor$ for $n\geq 13$.
In case of $n=12$ one can add at most four disjoint new triples to  $\QQ_7$, so we have $|\FF|\leq 16$.

\noindent
--- Case of $\QQ_8$. \enskip
Here $\alpha=4$, $\beta\leq 4(n-12)$ so~\eqref{eq10.6} gives $|\FF|
 \leq \frac{1}{8}(n^2-8n+96)< \lfloor \frac{1}{8}n^2 \rfloor$ for $n\geq 13$.
In case of $n=12$ one cannot add any new triple to  $\QQ_8$, so we have $|\FF|\leq 16$.

In the rest of the cases $n\in \{13, 14\}$ and $|U|\geq 13$.
Let $\eta$ (or $\eta_{n}$) be the size of $\FF\setminus (\partial \QQ)$.
Since $|\QQ|$ is at least 4 we have more restrictions, it is easier to give an upper bound for $\eta$.
In the cases $\QQ_9$--$\QQ_{15}$ we have to show that $\eta_{13}< 21-16=5$, and $\eta_{14}< 8$.
In the case $\QQ_{16}$ we need $\eta_{14}< 4$.

\noindent
--- Case of $\QQ_9$. \enskip
No more triples meeting $1593'$, so $\eta_{13}\leq 3$ and $\eta_{14}\leq 6$.

\noindent
--- Case of $\QQ_{10}$. \enskip
 Obviously $\eta_{13}=0$ and $\eta_{14}=0$.

\noindent
--- Case of $\QQ_{11}$. \enskip
 $\eta_{13}=0$ and $\eta_{14} \le 2$.

\noindent
--- Case of $\QQ_{12}$. \enskip
 $\eta_{13}=0$ and $\eta_{14}\leq {4 \choose 2}$.

\noindent
--- Case of $\QQ_{13}$. \enskip
No more triples meeting $890'1'$, so $\eta\leq 3$.

\noindent
--- Case of $\QQ_{14}$ and $\QQ_{15}$. \enskip
$\eta =0$.

\noindent
--- Case of $\QQ_{16}$. \enskip
$\eta \le 2$.
\qed

\section{Excluding $\BB$, $\CC$ and $\DD$} \label{sec:BCD}
For $\BB\CC\DD$-free families we know the upper bound \[ \ex(n,\BB\CC\DD) \le \ex(n,\CC\DD) = \lfloor \frac{1}{4}(n-1)^2 \rfloor.\]
The extremal construction for $\ex(n,\CC\DD)$ -- of the form $\FF_T$ for some tournament $T$ -- does contain $\BB$ configurations, so this bound cannot be exactly matched. But we do have an asymptotically matching construction.

\begin{thm} \label{thm:BCD}
\[\ex(n,\BB\CC\DD) = \frac{1}{4}n^2 \bigl( 1 - o(1) \bigr).\]
\end{thm}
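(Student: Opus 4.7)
The upper bound $\ex(n,\BB\CC\DD) \le \ex(n,\CC\DD) = \lfloor (n-1)^2/4 \rfloor = \tfrac{1}{4}n^2(1-o(1))$ is immediate from Theorem~\ref{thm:tourn}, since any $\BB\CC\DD$-free triple system is in particular $\CC\DD$-free. So the substantive task is to exhibit a $\BB\CC\DD$-free family of matching asymptotic size.

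For the lower bound, my plan is to start from an $\FF_T$-type family from Theorem~\ref{thm:tourn}. Fix a near-perfect matching in $K_n$ with $m = \lfloor n/2 \rfloor$ edges $e_1,\ldots,e_m$ and a tournament $T$ on $\{e_1,\ldots,e_m\}$; the resulting family $\FF_T$ has $|\FF_T| = 2\binom{m}{2} \sim n^2/4$ triples and is $\CC\DD$-free by Theorem~\ref{thm:tourn}. However, $\FF_T$ contains $\BB$-configurations whenever $T$ admits a directed path of length~$2$: if $e_i \to e_j \to e_\ell$ then the triples $\{a_i,b_i,a_j\}$, $\{a_i,b_i,b_j\}$, $\{a_j,b_j,v\}$ (for any $v \in e_\ell$) form a $\BB$. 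Since a tournament on $m \ge 3$ vertices always contains such a path, $\FF_T$ is never $\BB$-free, and a modification is required.

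The plan is to choose $T$ carefully and then delete $o(n^2)$ triples of $\FF_T$ so that the remainder is $\BB\CC\DD$-free. I would attempt this via a pseudorandom tournament $T$ (for instance a Paley tournament on a prime power $m \equiv 3 \pmod 4$, which is doubly regular) together with a hitting-set argument on the $\BB$-configurations in $\FF_T$. The main obstacle is that in any near-regular tournament the number of $\BB$s is $\Theta(n^3)$, while the average triple of $\FF_T$ participates in only $\Theta(n)$ of them, so a naive greedy deletion gives a hitting set of size $\Omega(n^2)$, which would destroy too many triples. A more refined argument is needed --- exploiting either the algebraic symmetry of $T$ (to find a sparse hitting set structured like a union of certain ``layers'' of triples) or a probabilistic thinning combined with a restriction to triples whose matching-edge labels lie in a carefully chosen substructure of $T$. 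Showing that the losses can be kept to lower order is the delicate technical heart of the lower bound.
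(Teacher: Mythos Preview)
Your upper bound is fine and matches the paper's. The lower bound, however, is not a proof but a plan, and you yourself identify why it stalls: in any tournament $T$ on $m$ vertices the number of $\BB$-configurations in $\FF_T$ is $\Theta(m^3)$ while each triple hits only $\Theta(m)$ of them, so any hitting set has size $\Omega(m^2)=\Omega(n^2)$. Appeals to ``algebraic symmetry'' or ``probabilistic thinning'' do not change this arithmetic; in fact, for a doubly regular tournament the situation is uniform enough that one can check that every $\BB$-free subfamily of $\FF_T$ has size at most $cn^2$ for some $c<\tfrac14$. So the deletion strategy, as stated, cannot reach $\tfrac14 n^2(1-o(1))$.

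The paper takes a different route that sidesteps the deletion problem entirely. Fix a large parameter $m$ and build a graph $H_m$ on $m^2$ vertices: ${m\choose 2}$ disjoint pairs $A_1,\dots,A_{{m\choose 2}}$ and a set $B$ of size $m$, with each $A_i\cup B$ spanning a clique. Inside $H_m$ place the triangle-free system from Section~\ref{sec:all}, namely all triples $A_i\cup\{b\}$; this is $\AAA\BB\CC\DD$-free on its own and has ${m\choose 2}m$ triples against $m(m-1)(m+1)$ edges, a ratio $\tfrac{m}{2(m+1)}\to\tfrac12$. Now use Wilson's theorem to decompose $E(K_n)$ (for suitable $n$) into edge-disjoint copies of $H_m$ and take the union of all these triple systems. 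Edge-disjointness forces any two triples from different copies to meet in at most one vertex, so no $\CC$ or $\DD$ can straddle copies; and the only pairs of triples sharing two vertices are $A_i\cup\{b\}$, $A_i\cup\{b'\}$ inside one copy, whose potential $\BB$-partner would have to contain $\{b,b'\}\subseteq B$, impossible in that copy. The size is $\tfrac{m}{2(m+1)}{n\choose 2}$, which gives the asymptotic $\tfrac14 n^2$ as $m\to\infty$. The missing idea in your attempt is precisely this: rather than pruning a single global tournament structure, one tiles $K_n$ by many small edge-disjoint blocks, each carrying a fully triangle-free system, so that $\BB$-freeness is automatic and no deletions are needed.
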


Our construction uses Wilson's well-known decomposition theorem, which we recall now.

\begin{thm}[Wilson~\cite{Wi}] \label{thm:W}
For every fixed graph $H$ there exists $n_0 = n_0(H)$ such that the following holds for all $n \ge n_0$: \\There exists a decomposition of the edge set $E(K_n)$ of the complete graph on $n$ vertices into copies of $H$ if and only if the number of edges of $H$ divides ${n \choose 2}$, and the greatest common divisor of the vertex degrees in $H$ divides $n-1$.
\end{thm}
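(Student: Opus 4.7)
The necessity direction is a short counting argument: if $E(K_n)$ decomposes into copies of $H$, then $|E(H)|$ must divide $\binom{n}{2}$ by counting edges; moreover, at any vertex $v\in V(K_n)$, the $n-1$ incident edges split according to the copy they belong to, each copy contributing $\deg_H(u)$ edges for some $u\in V(H)$ (the vertex of $H$ playing the role of $v$). Hence $n-1$ is a non-negative integer combination of the vertex degrees of $H$, and so $\gcd\{\deg_H(u):u\in V(H)\}$ divides $n-1$.

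For the sufficiency direction, which is the entire content of the theorem, I would follow a hybrid probabilistic/algebraic strategy in three phases. First, reserve a small random ``absorbing'' subgraph $A\subseteq E(K_n)$ with the property that $A\cup R$ decomposes into copies of $H$ for every graph $R$ lying in a prescribed divisibility class; the existence of such $A$ is established by iteratively building universal absorbers from smaller gadgets. Second, apply the R\"odl nibble (semi-random method) to the auxiliary $e(H)$-uniform hypergraph whose vertices are the edges of $K_n\setminus A$ and whose hyperedges are the edge sets of copies of $H$ in $K_n\setminus A$; since this hypergraph is nearly regular with bounded codegree, the nibble outputs a partial $H$-decomposition covering all but $o(n^2)$ edges. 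Third, adjust the sparse remainder $R$ by removing a small parity-correcting subgraph so that $R$ falls into the class handled by $A$, and invoke the absorber to complete the decomposition.

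The two divisibility hypotheses enter precisely to make the book-keeping consistent at every step: $|E(H)|\mid\binom{n}{2}$ guarantees that the edge count of the leftover is always a multiple of $|E(H)|$, while $\gcd(\deg_H(u):u\in V(H))\mid n-1$ guarantees that at each vertex the residual degree is partitionable into valid $H$-degree contributions. The main obstacle by far is the construction of the absorber: one must produce a structure of size $o(\binom{n}{2})$ that is simultaneously decomposable together with any admissible remainder $R$. Wilson's original proof avoids absorption entirely and instead develops a theory of pairwise balanced designs and transversal designs, proving a fundamental composition theorem that builds large decompositions from a finite stock of ingredient designs once the divisibility conditions are met. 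Either route demands substantial additional machinery beyond what is developed in the present paper, and since $n_0(H)$ is permitted to depend arbitrarily on $H$, the cleanest modern implementation is the iterative absorption framework; the argument just sketched is therefore a plan rather than a self-contained derivation.
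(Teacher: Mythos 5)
The paper does not prove this statement at all: it is Wilson's theorem, stated as Theorem~\ref{thm:W} with a citation and used purely as a black box in the construction for Theorem~\ref{thm:BCD}, so there is no in-paper proof to compare yours against. Your necessity argument is correct and is the standard one: counting edges gives $e(H)\mid\binom{n}{2}$, and counting the edges at a fixed vertex of $K_n$ writes $n-1$ as a nonnegative integer combination of the vertex degrees of $H$, hence a multiple of their greatest common divisor.

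The sufficiency direction, however, which as you say is the entire content of the theorem, is not proved in your proposal; it is an outline whose decisive steps are missing. The construction of an absorber $A$ of size $o(n^2)$ that decomposes together with \emph{every} admissible leftover $R$ is exactly where all the difficulty of the modern proofs lies (iterative absorption in the style of Barber--K\"uhn--Lo--Osthus and Glock--K\"uhn--Lo--Montgomery--Osthus, or Keevash's randomised algebraic method), and you do not construct it; similarly, the nibble step needs a verification that the auxiliary design hypergraph is near-regular with small codegrees, and the ``parity-correcting'' modification of $R$ needs concrete gadgets showing that any leftover satisfying the divisibility conditions can be steered into the class the absorber handles. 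Wilson's original route through pairwise balanced designs and closure is no less substantial. So your text establishes only the easy implication; this is harmless for the present paper, which invokes the theorem as an external result, but the proposal should not be counted as a proof of the statement.
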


\noindent \textbf{Proof of Theorem~\ref{thm:BCD}.}\quad
For every integer $m \ge 2$ we construct a graph $H_m$ to which we will apply Theorem~\ref{thm:W}. It has $m^2$ vertices, partitioned into ${m \choose 2}$ pairs $A_1,\ldots,A_{{m \choose 2}}$ and a set $B$ of size $m$. The graph induced on each $A_i \cup B$, $1 \le i \le {m \choose 2}$, is complete, and these are all the edges of $H_m$. Thus
\begin{itemize}
    \item $|E(H_m)| = {m \choose 2}(2m+1) + {m \choose 2} = {m \choose 2} (2m+2) = m(m-1)(m+1)$.
    \item $\gcd\bigl( \{\deg_{H_m}(v)\colon v \in V(H_m)\}\bigr) = \gcd(m+1,m^2-1) = m+1$.
\end{itemize}
Let $n \ge n_0(H_m)$ be such that $2m(m-1)(m+1)$ divides $n-1$. Then both divisibility conditions in Theorem~\ref{thm:W} are satisfied, so we can find a decomposition of $E(K_n)$ into $\frac{1}{m(m-1)(m+1)}{n \choose 2}$ copies of $H_m$.

In each copy of $H_m$ in the decomposition, we place ${m \choose 2}m$ triples of the form $A_i \cup \{b\}$ where $1 \le i \le {m \choose 2}$ and $b \in B$. Within each copy, this triple system is triangle-free: it actually follows the construction presented before Theorem~\ref{thm:all}. Let $\HH^n_m$ be the system formed by all these triples in all copies of $H_m$. Because the triples in a given copy use only edges of that copy of $H_m$, and distinct copies are edge-disjoint, triples in distinct copies can share at most one vertex. This immediately implies that $\HH^n_m$ is $\CC\DD$-free. Suppose there is a $\BB$ configuration in $\HH^n_m$. Then the two triples in it that share an edge must come from the same copy of $H_m$ and be of the form $A_i \cup \{b\}$, $A_i \cup \{b'\}$ for some $b \ne b' \in B$. The third triple contains $\{b,b'\}$, which is an edge of the same copy of $H_m$. Thus the third triple could only come from this same copy, but no triple in the construction contains two vertices of $B$. We have shown that $\HH^n_m$ is $\BB\CC\DD$-free. The number of triples in it is \[|\HH^n_m| = {m \choose 2}m \cdot \frac{1}{m(m-1)(m+1)}{n \choose 2} = \frac{1}{2} \frac{m}{m+1} {n \choose 2}.\] When $m$ and $n$ both go to infinity, with $n$ satisfying the above conditions with respect to $m$, namely $n \ge n_0(H_m)$ and $2m(m-1)(m+1) | n-1$, we have $|\HH^n_m| = \frac{1}{4}n^2 \bigl(1 - o(1) \bigr)$. For sufficiently large $n$ that does not satisfy the divisibility condition, we can find an $n'$ that does, $n - 2m(m-1)(m+1) < n' < n$, and take the triple system $\HH^{n'}_m$, still of the required asymptotic size. \qed

\medskip

\noindent {\bf Acknowledgments} \quad
Discussions with Andr\'as Gy\'arf\'as, Ervin Gy\H{o}ri, Gyula O.\ H. Katona, and A. Kostochka
are gratefully acknowledged. In particular, the construction presented in Section~\ref{sec:all} was communicated by Andr\'as Gy\'arf\'as as a conjectured optimal
construction.

\end{document}